\numberwithin{figure}{section}
\numberwithin{table}{section}
\numberwithin{equation}{section}
\newtheorem{thm}{Theorem}[section]
\newtheorem{lem}[thm]{Lemma}
\newtheorem{cor}[thm]{Corollary}
\newtheorem{remark}{Remark}[section]
\newcommand{\C}{\mathbb{C}}
\newcommand{\N}{\mathbb{N}}
\newcommand{\R}{\mathbb{R}}
\newcommand{\B}{\mathcal B}
\newcommand{\cF}{\mathcal F}
\newcommand{\cG}{\mathcal G}
\newcommand{\cV}{\mathcal V}
\newcommand\interior{\operatorname{int}}
\newcommand{\Lc}{\mathcal L}
\newcommand\lip{\mathrm{lip}}
\newcommand\bg{\mathbf{F}}
\newcommand\g{F}
\newcommand\bL{\mathbf{L}}
\newcommand\bV{\mathbf{V}}
\newcommand\W{W}
\newcommand\amf{\mathfrak{a}}
\newcommand\bmf{\mathfrak{b}}
\newcommand\amh{\mathfrak{h}}
\newcommand\rmf{R}
\def\cprime{\char"7E }
\DeclareMathOperator{\Lip}{Lip}
\DeclareMathOperator{\diam}{diam}
\begin{document}

\title[High Order Computation of Hausdorff Dimension]
{Hidden Positivity and a New Approach to Numerical Computation of
Hausdorff Dimension: Higher Order Methods}
\author{Richard S. Falk}
\address{Department of Mathematics, Rutgers University, Piscataway, NJ 08854}
\email{falk@math.rutgers.edu}
\urladdr{http://www.math.rutgers.edu/\char'176falk/}

%    author two information
\author{Roger D. Nussbaum}
\address{Department of Mathematics,
Rutgers University, Piscataway, NJ 08854}
\email{nussbaum@math.rutgers.edu}
\urladdr{http://www.math.rutgers.edu/\char'176nussbaum/}
%\thanks{The work of the second author was supported by
%NSF grant DMS-1201328.}
\subjclass[2000]{Primary 11K55, 37C30; Secondary: 65J10}
\keywords{Hausdorff dimension, positive transfer operators, 
continued fractions}
\date{original submitted August 24, 2020; revised version February 26, 2021}

\begin{abstract}
  In \cite{hdcomp1}, the authors developed a new approach to the
  computation of the Hausdorff dimension of the invariant set of an
  iterated function system or IFS. In this paper, we extend this
  approach to incorporate high order approximation methods. We again
  rely on the fact that we can associate to the IFS a parametrized
  family of positive, linear, Perron-Frobenius operators $L_s$, an
  idea known in varying degrees of generality for many years. Although
  $L_s$ is not compact in the setting we consider, it possesses a
  strictly positive $C^m$ eigenfunction $v_s$ with eigenvalue
  $\rmf(L_s)$ for arbitrary $m$ and all other points $z$ in the
  spectrum of $L_s$ satisfy $|z| \le b$ for some constant $b <
  \rmf(L_s)$. Under appropriate assumptions on the IFS, the Hausdorff
  dimension of the invariant set of the IFS is the value $s=s_*$ for
  which $\rmf(L_s) =1$.  This eigenvalue problem is then approximated
  by a collocation method at the extended Chebyshev points of each
  subinterval using continuous piecewise polynomials of arbitrary
  degree $r$.  Using an extension of the Perron theory of
  positive matrices to matrices that map a cone $K$ to its interior
  and explicit a priori bounds on the derivatives of the strictly
  positive eigenfunction $v_s$, we give rigorous upper and lower
  bounds for the Hausdorff dimension $s_*$, and these bounds converge
  rapidly to $s_*$ as the mesh size decreases and/or the polynomial
  degree increases.

\end{abstract}

\maketitle

\section{Introduction}
\label{sec:intro}

In this paper, we continue previous work in finding rigorous estimates
for the Hausdorff dimension of invariant sets for iterated function
systems or IFS's. To describe the framework of the problem we are
considering, we let $S \subset \R$ be a nonempty compact set, and for
some positive integer $m$, let $\theta_p: S \to S$ and $g_p : S \to
[0, \infty] \in C^m(S)$ for $1 \le p \le n < \infty$.  If $\theta_p$
are contraction mappings, it is known that there exists a unique,
compact, nonempty set $C \subset S$ such that $C= \cup_{p=1}^n
\theta_p(C)$.  The set $C$ is called the invariant set for the IFS
$\{\theta_p: 1 \le p \le n\}$.

For $s >0$, define a bounded linear map $L_s: C(S) \to C(S)$, (often
called a {\it Perron-Frobenius operator} or {\it linear transfer operator}) by
\begin{equation}
\label{2.1}
(L_s f)(t) = \sum_{p=1}^n [g_p(t)]^s f(\theta_p(t)), \quad t \in S.
\end{equation}
Under additional appropriate hypotheses (stated in the next section),
$L_s$, considered as a map from $C^m(S) \mapsto C^m(S)$, has a
strictly positive eigenfunction $v_s \in C^m(S)$ with algebraically
simple eigenvalue $\lambda_s = \rmf(L_s)$, the spectral radius of
$L_s$. In addition, all other points $z$ in the spectrum of $L_s$
satisfy $|z| \le b$ for some constant $b < \rmf(L_s)$.  A more precise
statement of this result, along with other conclusions, is given in
Theorem~\ref{thm:2.4} in the next section. Note that in the $C^m$
setting, $L_s$ is, in general, not compact, has positive essential
spectral radius and cannot be the limit in operator norm of a sequence
of finite dimensional linear operators.  These difficulties do not
usually arise if $L_s$ can be studied in a Banach space of complex
analytic functions; and there is an extensive literature concerning
the spectral theory of Perron-Frobenius operators which map a Banach
space of analytic functions into itself.  We prefer to work in the
more general $C^m$ setting so as to provide tools which also can be
applied to some non-analytic examples, e.g., as in Section 5 of
\cite{hdcomp1}.

The aim of this paper is to derive an approximation scheme that allows
us to estimate $\rmf(L_s)$ by the spectral radius of an associated
matrix $\bL_s$ which approximates the operator $L_s$ in a weak sense
and then to obtain rigorous bounds on the error $|\rmf(L_s) -
\rmf(\bL_s)|$. We then use this approximation scheme to estimate
$s_*$, the unique number $s \ge 0$ such that $\rmf(L_s) =1$.  Under
appropriate assumptions, $s_*$ equals the Hausdorff dimension of the
invariant set associated to the IFS. This observation about Hausdorff
dimension has been made, in varying degrees of generality by many
authors. See, for example, \cite{Bumby1}, \cite{Bumby2}, \cite{Bowen},
\cite{Cusick1}, \cite{Cusick2}, \cite{Falconer}, \cite{Good},
\cite{Hensley1}, \cite{Hensley2}, \cite{Hensley3}, \cite{J},
\cite{Jenkinson}, \cite{Jenkinson-Pollicott}, \cite{MR1902887},
\cite{H}, \cite{Mauldin-Urbanski}, \cite{N-P-L}, \cite{Ruelle},
\cite{Ruelle2}, \cite{Rugh}, \cite{Schief}, and \cite{C-L-U}.  There
is also a large literature on the approximation of linear transform
operators, not necessarily related to the computation of Hausdorff
dimension, and often assuming the maps are analytic.  We do not
attempt to survey that literature, other than to cite one recent
paper, \cite{B-S}, which has some connections to our work here, and
contains many references to that literature.

In previous work, \cite{hdcomp1}, the authors presented a new approach
to the problem described in the preceding paragraph. We obtained
rigorous upper and lower bounds for the Hausdorff dimension $s_*$, and
these bounds exhibited second order convergence to $s_*$ as the mesh
size decreases.  The approximate matrix was obtained by a collocation
method using continuous piecewise linear functions, motivated by the
fact that if such functions are nonnegative at the mesh points, they
are nonnegative at all points of the interval in which they are
defined.  This property leads to nonnegative matrix approximations of
the operator $L_s$.  One would like these matrices to mimic the
properties of the continuous operator $L_s$, which means they should
satisfy the conclusions of the Perron theorem for positive matrices
(matrices with strictly positive entries), i.e., they should have an
eigenvalue of multiplicity one equal to the spectral radius of the
matrix with corresponding positive eigenvector and all other
eigenvalues of the matrix should have modulus less than the spectral
radius. This is not true for nonnegative matrices, however, unless
they have an additional property.  One such property that would
guarantee this is that the matrix $\bL_s$ be {\it primitive}, i.e.,
there exists a positive integer $p$ such that $\bL_s^p$ is a positive
matrix.  Note that if $\bL_s$ is {\it irreducible}, then the first two
properties hold, but there can be other eigenvalues of the same
modulus as the spectral radius. Unfortunately, the approximation
scheme used led to matrices which are neither primitive nor
irreducible..  The remedy to obtain the desired properties was to note
that the cone $K$ of nonnegative vectors is not the natural cone in
which such matrices should be studied. Using a more general notion of
positivity of an operator $L$ in which $L$ maps a cone $K$ into
itself, one can still obtain the conclusions of the Perron theorem.
This is important since we use the spectral radius of the approximate
matrix $\bL_s$ to approximate the spectral radius of $L_s$ and the
fact that there is a single dominant eigenvalue enables us to
calculate it efficiently using some variant of the power method.

In this paper, we analyze a similar method obtained by approximation
using higher order piecewise polynomials. As we shall see, the
matrices resulting from the approximation scheme appear to be even
more problematic, since they are not even nonnegative.  Despite this
fact, the use of an alternative cone, in place of the standard cone of
nonnegative vectors, allows us to show that the conclusions of the
classical Perron theorem also hold for the matrices of this paper.
There is a substantial abstract theory which has been developed for
finite dimensional linear operators which are positive in the sense
that they map a cone into itself. The survey paper \cite{Tam},
references in \cite{Tam}, and appendices A and B in
\cite{Lemmens-Nussbaum-2013} provide a good starting point. However,
the difficulty lies in finding such a cone that fits the application
under study. We use the term {\it hidden positivity} to call attention
to the fact that we are able to find such a cone for the approximate
operators developed in this paper.

The cone we use is easiest to describe in the case of continuous, piecewise
linear functions, and is defined as follows.  On the interval $[0,1]$,
for fixed integer $N$, let $h = 1/N$ and $x_i = i h$, $i = 0,1, \ldots, N$.
The space of continuous, piecewise linear functions is just the finite
dimensional space of continuous functions that restricted to each subinterval
$[x_i, x_{i+1}]$ are linear functions.  Since a function $w$ in this space
is completely determined by its values $w_i = w(x_i)$, $i = 0,1, \ldots, N$
(the {\it degrees of freedom} of $w$), we can also view $w$ as the
vector $[w_0, \ldots, w_N]$.  For any integer $M >0$, we then define
the cone $K_M$ by
\begin{equation*}
K_M = \{w: w_i \le \exp(M |x_i- x_j|) w_j,  \quad i,j =0, 1, \ldots N\}.
\end{equation*}
The cone for higher order piecewise polynomials is similar, but its
description is more involved because of the more complicated nature of
the degrees of freedom of such functions.  The details are provided in
Section~\ref{sec:approx}.

One technical difference between piecewise linear functions and higher
order piecewise polynomials is that in order to obtain the results
described above, we must consider approximations $\bL_{s, \nu}$ of the matrix
$L_s^{\nu}$, where $\nu$ depends on the degree $r$ of the piecewise
polynomial approximation. As we observe in the next section, the operator
$L_s^{\nu}$ has the same form as $L_s$, i.e.,
\begin{equation*}
(L_s^{\nu} f)(t) = \sum_{\omega \in \Omega_{\nu}} [g_{\omega}(t)]^s f(\theta_{\omega}(t)),
\end{equation*}
where for $\nu \ge 1$,
$\Omega_{\nu} = \{\omega = (p_1, p_2, \ldots, p_{\nu}): 1 \le p_j \le n
\text{ for } 1 \le j \le \nu\}$; and for
$\omega = (p_1, p_2, \ldots, p_{\nu}) \in \Omega_{\nu}$,
\begin{equation*}
\theta_{\omega}(t) = (\theta_{p_1} \circ \theta_{p_2} \circ
\cdots \circ \theta_{p_{\nu}})(t),
\end{equation*}
and $g_{\omega}(t)$ is defined in the next section.  We note that
under the weaker assumption that $\theta_{\omega}$ is a contraction
mapping for all $\omega \in \Omega_{\nu}$, there exists a unique
compact set $C$ such that $C= \cup_{\omega \in
  \Omega_{\nu}}\theta_{\omega}(C)$ and that necessarily $C =
\cup_{p=1}^n \theta_p(C)$.  By using the matrix $\bL_{s, \nu}$, one
reduces the domain of the operator to a finite set of subintervals
whose total length is much less than the original length of the
domain, resulting in many fewer mesh points.  The downside, however,
is that this advantage is completely offset by the increase in the
number of terms in the operator $L_s$ for each time the
map is iterated, i.e., from $n$ to $n^{\nu}$.  We note that since
we have not found any case where the method fails if we do not iterate
the matrix, we conjecture that this extra condition is an artifact of
the method of proof, and not the method itself.

To obtain the conclusions of the Perron theorem, the key
result is to show that for some $0 < M' < M$,
\begin{equation}
\label{keycond-intro}
\bL_{s,\nu}(K_M \setminus \{0\}) \subset K_{M'} \setminus \{0\}.
\end{equation}
This enables us to apply
results from the literature on mappings of a cone to itself to obtain
the desired conclusions.  Details of this connection, along with
references to the relevant literature, are described in
Section~\ref{sec:theory-c}. 

A main goal of our approach, in addition to proposing a new
approximation scheme, is to provide rigorous upper and lower bounds
for the Hausdorff dimension of the underlying IFS.  This will follow
directly if we are able to derive rigorous error bounds for
$|[\rmf(L_s)]^{\nu} - \rmf(\bL_{s,\nu})|$.  In the case of piecewise
linear functions, we obtained the bounds by using a simple and
well-known result (c.f. Lemma 2.2 of \cite{hdcomp1}) that if $A$ is an
nonnegative matrix and $w$ a vector with strictly positive components,
then if for all components $k$, (i) if $(A w)_k \ge \lambda w_k$, then
$\rmf(A) \ge \lambda$ and (ii) if $(A w)_k \le \lambda w_k$, then
$\rmf(A) \le \lambda$.  Here, we use an analogous result for a matrix
mapping a cone $K$ to itself, in which we replace $\le$ by $\le_K$, 
i.e, $u \le_K v$ if and only if $v-u \in K$.

Another key tool for obtaining rigorous upper and lower bounds for the
Hausdorff dimension $s_*$, is to obtain and use explicit a priori
bounds on the quantity $D^q v_s(x)/v_s(x)$ of the strictly positive
eigenfunction $v_s$ of $L_s$, where $D^q v_s$ denotes the $q$-th derivative of
$v_s$. Such estimates are derived in Section~\ref{sec:est-constants}.

In order to improve the efficiency of our computation, we consider in
Section~\ref{sec:est-constants} the possibility of replacing the
original interval $S = [a,b]$ by a smaller interval $S_0 \subset S$
such that $\theta_{\beta}(S_0) \subset S_0$ for $\beta \in \B$.  In
particular, for the maps $\theta_{\beta} =1/(x+ \beta)$, setting
$\gamma= \min \{\beta: \in \B\}$ and $\Gamma = \max \{\beta: \in
\B\}$, we can reduce the interval $S$ to $[\amf_{\infty},
\bmf_{\infty}]$, where
\begin{equation*}
\amf_{\infty} = - \frac{\gamma}{2} + \sqrt{(\gamma/2)^2 + (\gamma/\Gamma)}
\quad \text{and} \quad
\bmf_{\infty} = - \frac{\Gamma}{2} + \sqrt{(\Gamma/2)^2 + (\Gamma/\gamma)}
= \frac{\Gamma}{\gamma} \amf_{\infty}.
\end{equation*}
For example, for the set $\{1,2\}$, we reduce the interval $[0,1]$
to $[(\sqrt{3} - 1)/2, \sqrt{3} - 1]$ of length $0.366$, while for
the set  $\{10,11\}$, we reduce the interval $[0,1]$
to $[0.0901, .0991]$ of length $.009$.

A main result of the paper (Theorem~\ref{thm:8.1}) says that under appropriate
hypotheses, there is a computable constant $H$, such that
\begin{equation*}
[\rmf([1 + H h^r]^{-1} \bL_{s,\nu})]^{1/\nu} \le \lambda_s \le
[\rmf([1 - H h^r]^{-1} \bL_{s,\nu})]^{1/\nu},
\end{equation*}
where $h$ denotes the maximum mesh size and $r$ the degree of the piecewise
polynomial approximation. 
Using these inequalities, we can obtain rigorous upper and lower bounds
on the Hausdorff dimension of the invariant set associated with the transfer
operator $L_s$ as follows.
Let $s_l$ and $s_u$ denote values of $s$ satisfying
\begin{equation*}
[1-Hh^r]^{-1} \rmf(\bL_{s_u,\nu}) <1, \qquad
[1+Hh^r]^{-1} \rmf(\bL_{s_l,\nu}) >1.
\end{equation*}
It follows immediately from Theorem~\ref{thm:8.1} that
$\lambda_{s_u}^{\nu} < 1$ and $\lambda_{s_l}^{\nu} >1$.  Since the
spectral radius $\lambda_s$ of $L_s$ is a strictly decreasing function
of $s$, there will be a value $s_*$ satisfying $s_l < s_* < s_u$ for
which $\lambda_{s_*}^{\nu} = 1$, or equivalently $\lambda_{s_*} = 1$.
The value $s_*$ gives the Hausdorff dimension $s_*$ of the invariant
set associated with the transfer operator $L_s$.  Since $s_u-s_l$ is
of order $h^r$, by choosing $h$ to be sufficiently small and/or $r$ to
be sufficiently large, we obtain a highly accurate estimate for $s_*$.
As noted above, for a given $s$, $\rmf([1 \pm H h^r]^{-1}
\bL_{s,\nu})$ is easily computed by variants of the power method for
eigenvalues, since the largest eigenvalue has multiplicity one and is
the only eigenvalue of its modulus. Our theoretical results imply that
$\bL_{s,\nu}$ has an eigenvector $w$ in $K:=K_M$ with eigenvalue
$\rmf(L_{s,\nu})$ and that this eigenvector can be computed to high
accuracy. Still, one might be concerned about possible errors in the
computation of of $R(\bL_{s,\nu})$ and $w$. However, independently of
how a purported eigenvector $w \in K$ for $\bL_{s,\nu}$ is found, if
$\alpha w \le_K \bL_{s,\nu} w \le_K \beta w$,
Lemma~\ref{lem:cone-compare} in Section~\ref{sec:theory-c} implies
that $\alpha \le \rmf(\bL_{s,\nu}) \le \beta$. This provides a means
of giving rigorous bounds for $\rmf(\bL_{s,\nu})$.

In Section~\ref{sec:num-comp}, we present results of computations of
the Hausdorff dimension $s$ of invariant sets in $[0,1]$ arising from
continued fraction expansions. In this much studied case, one defines
$\theta_p = 1/(x+p)$, for $p$ a positive integer and $x \in [0,1]$;
and for a subset $\B \subset \N$, one considers the IFS $\{\theta_p: p
\in \B\}$ and seeks estimates on the Hausdorff dimension of the
invariant set $C =C(\B)$ for this IFS.  This problem has previously
been considered by many authors. See \cite{Bourgain-Kontorovich},
\cite{Bumby1}, \cite{Bumby2}, \cite{Good}, \cite{Hensley1},
\cite{Hensley2}, \cite{Hensley3}, \cite{Jenkinson},
\cite{Jenkinson-Pollicott}, and \cite{Heinemann-Urbanski}.  In this
case, \eqref{2.1} becomes
\begin{equation*}
(L_{s} f)(x) = \sum_{p \in \B} \Big(\frac{1}{x+p}\Big)^{2s} 
f\Big(\frac{1}{x+p}\Big), \qquad 0 \le x \le 1,
\end{equation*}
and one seeks a value $s \ge 0$ for which $\lambda_s:= \rmf(L_{s})
=1$.  Several of the papers listed above contain a large number
of computations to various degrees of accuracy of the Hausdorff
dimension of the IFS $\{\theta_p: p \in \B\}$, for various
choices of the set $\B$.  An early paper, \cite{Hensley2}, gives
results for over 30 choices of $\B$, containing between two and five
terms in the set $\B$, with results reported to an accuracy between
$10^{-6}$ and $10^{-19}$, depending on the problem studied.  A {\it
  Mathematica} code implementing the algorithm is also provided.  In
\cite{Jenkinson}, computations to four decimal places are given for
over $35$ choices of the set $\B$, ranging from two terms, to as many
as 34 (this computation is to three decimal places), and also includes
a computation of $E[1,2]$, ($\B = \{1,2\}$), accurate to 54 decimal
places.  In \cite{Jenkinson-Pollicott}, eight examples of $\B$,
consisting of two terms, are computed with accuracies ranging from
$10^{-13}$ to $10^{-52}$, depending on the choice of $\B$, although
the authors note that for the sets $[10,11]$, and $[100, 10,000]$,
they were able to compute to accuracies of $10^{-61}$ and $10^{-122}$,
respectively.  This depends on the fact that the speed of convergence
of their methods depends on the size of the smallest value of $p \in
\B$.  In \cite{JP100}, the Hausdorff dimension of $E[1,2]$ is
rigorously computed to 100 decimal places, although more digits are
computed.  It is less clear how well some of the approximation schemes
employed in these papers work when $|\B|$ is moderately large or when
different real analytic functions $\hat \theta_j: [0,1] \to [0,1]$ are
used.  Here and in \cite{hdcomp1}, in the one dimensional case, we
present an alternative approach with much wider applicability that
only requires the maps in the IFS to be $C^m$, for some finite value
of $m$.  As an illustration, we considered in \cite{hdcomp1},
perturbations of the IFS for the middle thirds Cantor set for which
the corresponding contraction maps are $C^3$, but not $C^4$.

The computations in Section~\ref{sec:num-comp} include choices of
various sets of continued fractions, maximum mesh size $h$, piecewise
polynomial degree $r$, and number of iterations $\nu$, ( where $\nu=1$
corresponds to the original map), including choices of $\nu$ for which
the hypotheses of our theorems are satisfied, but also computations
which obtain the same results when the mappings are not
iterated. These results support our conjecture that our method also
works in the non-iterated situation. To facilitate computation of
further examples, a {\it Matlab} code is provided in {\tt
  https://sites.math.rutgers.edu/\char'176falk/hausdorff/codes.html}.

An outline of the paper is as follows. In the next section, we
introduce further notation and state some preliminary results we will
use in our analysis. Section~\ref{sec:approx} contains a description
of the approximate problem and the cone we use to analyze
it. Section~\ref{sec:theory-c} contains the theoretical results we
will need to show that the matrices arising from the approximation
scheme satisfy the conclusions of the Perron theorem.  In
Section~\ref{sec:theory-d}, the main result is to determine conditions
under which the matrix $\bL_{s,\nu}$ satisfies \eqref{keycond-intro}.
These conditions involve a number of constants, which we then estimate
in Section~\ref{sec:estimating-sr}, ultimately deriving bounds for
$\rmf(L_s)$ in terms of $[\rmf(\bL_{s,\nu})]^{1/\nu}$.  In
Section~\ref{sec:est-constants}, we consider a method for reducing the
size of the interval $S$ on which the problem is defined, with the aim
of reducing the number of mesh points that will be needed in the
approximation scheme.  In so doing, we are also able to improve the
bound on two constants which are used in the error estimate for
$\rmf(L_s)$.  Recall that condition \eqref{keycond-intro} requires
determining for each constant $M$, a constant $0 < M' <M$ such that
\eqref{keycond-intro} is satisfied.  In Section~\ref{sec:computation},
we provide a procedure for determining this constant.  Finally, the
numerical computations described above are given in
Section~\ref{sec:num-comp}.

It would be of considerable interest to extend the methods of this
paper to the two dimensional case, e.g., to the problem of obtaining
rigorous estimates for the Hausdorff dimension of sets of complex
continued fractions. We conjecture that such an extension can be done,
but we leave it as an open problem for possible future work.

\section{Notation and Preliminaries}
\label{sec:notation}

Let $C(S)$ denote the Banach space of continuous functions $f :S \to \R$, where
$S$ is a compact subset of $\R$. Assume

(H0): For $1 \le p \le n < \infty$, $\theta_p:S \to S$ is a
Lipschitz map.

(H1): For $1 \le p \le n < \infty$, $g_p:S \to [0, \infty)$ 
is a nonnegative continuous function which is not identically zero.
In addition, there exists a constant $M_0 >0$ such that
\begin{equation*}
g_p(t_1) \le g_p(t_2) \exp(M_0|t_1 - t_2|), \quad \forall t_1, t_2 \in S, 
\quad 1 \le p \le n.
\end{equation*}
We note that it is easy to show that (H1) is equivalent to assuming
that $g_p(t) >0$ for all $t \in S$, and
\begin{equation*}
|\ln(g_p(t_1)) - \ln(g_p(t_2))| \le M_0 |t_1-t_2|, \qquad \forall t_1,t_2
 \in S, \quad 1 \le p \le n.
\end{equation*}

For $s >0$, define a bounded linear map $L_s:C(S) \to C(S)$ (often called a
{\it Perron-Frobenius operator}) by \eqref{2.1}, i.e.
\begin{equation*}
(L_sf)(t) = \sum_{p=1}^n [g_p(t)]^s f(\theta_p(t)), \quad t \in S.
\end{equation*}

We shall need to consider the $\nu$th iterate of $L_s$, $L_s^{\nu}$.  
For $\nu \ge 1$,
let $\Omega_{\nu} = \{\omega = (p_1, p_2, \ldots, p_{\nu}): 1 \le p_j \le n
\text{ for } 1 \le j \le \nu\}$; and for
$\omega = (p_1, p_2, \ldots, p_{\nu}) \in \Omega_{\nu}$, define
\begin{equation*}
%\label{2.2} 
\theta_{\omega}(t) = (\theta_{p_1} \circ \theta_{p_2} \circ
\cdots \circ \theta_{p_{\nu}})(t)
\end{equation*}
and
\begin{equation*}
%\label{2.3}
g_{\omega}(t) = g_{p_1}(\theta_{p_2} \circ \cdots \circ \theta_{p_{\nu}}(t))
g_{p_2}(\theta_{p_3} \circ \cdots \circ \theta_{p_{\nu}}(t))
\cdots g_{p_{\nu-1}}(\theta_{p_{\nu}}(t)) g_{p_{\nu}}(t).
\end{equation*}
The reader can verify (e.g., see \cite{N-P-L}) that for all $f \in C(S)$,
\begin{equation*}
(L_s^{\nu}f)(t) = \sum_{\omega \in \Omega_{\nu}} [g_{\omega}(t)]^s f(\theta_{\omega}(t)).
\end{equation*}
Note that $L_s^{\nu}$ has the same form as $L_s$, except with index
set $\Omega_{\nu}$.  To analyze the operator $L_s^{\nu}$, we shall need
stronger assumptions than (H0).  We will thus assume

(H2): (H0) is satisfied and there exist constants $C_0 \ge 1$ and 
$\kappa$, $0 < \kappa <1$, such that for all integers $\nu \ge 1$, all $\omega
\in \Omega_{\nu}$, and all $t_1, t_2 \in S$,
\begin{equation*}
|\theta_{\omega}(t_1) - \theta_{\omega}(t_2)| \le C_0 \kappa^{\nu} |t_1 - t_2|.
\end{equation*}

Assuming (H1) and (H2), one can prove that for all $\omega \in \Omega_{\nu}$ and
all $t_1, t_2 \in S$, 
\begin{equation*}
g_{\omega}(t_1) \le \exp(M_0'|t_1-t_2|) g_{\omega}(t_2),
\end{equation*} 
where $M_0' = M_0 C_0[(1- \kappa^{\nu})/(1- \kappa)]$.
The proof is left to the reader. The reader will notice that the above
framework carries over to the more general case that $S$ is a compact
metric space with metric $\rho$. (H1) and (H2) take the same form except
that $|t_1-t_2|$ is replaced by $\rho(t_1,t_2)$.

The following result provides some theoretical background which will be
essential for our later work concerning the operator $L_s$. This
theorem is a special case of Corollary 6.6 in \cite{Nussbaum-2016}. We
refer to Section 3 of \cite{Nussbaum-1970} for a brief discussion of the
essential spectrum, which is mentioned in Theorem~\ref{thm:2.4} below.

\begin{thm}
\label{thm:2.4}
Assume hypotheses (H1) and (H2) are satisfied, that $S$ is a finite
union of compact intervals, and that $L_s$ is given by \eqref{2.1},
where $s >0$.  Assume also that $\theta_i \in C^m(S)$ and $g_i \in
C^m(S)$ for some positive integer $m$. Let $\Lambda_s:Y:=
C^m(S) \to Y$ be the bounded linear operator given by
\eqref{2.1}, but considered as a map of $Y \mapsto Y$, so $L_s(f) =
\Lambda_s(f)$ for $f \in Y$. If $\rmf(L_s)$ (respectively,
$\rmf(\Lambda_s)$) denotes the spectral radius of $L_s$ (respectively, of
$\Lambda_s$) and $\rho(\Lambda_s)$ denotes the essential spectral
radius of $\Lambda_s$ and $\kappa$ is as in (H2), then
\begin{equation*}
%\label{2.12}
\rho(\Lambda_s) \le \kappa^m \rmf(\Lambda_s), \qquad
\rmf(\Lambda_s) =  \rmf(L_s):= \lambda_s >0.
\end{equation*}
Let $\hat \Lambda_s$ denote the complexification of $\Lambda_s$. If
$\sigma(\hat \Lambda_s)$ denotes the spectrum of $\hat \Lambda_s$, and we
define $\sigma(\Lambda_s) := \sigma(\hat \Lambda_s)$, then if $z \in 
\sigma(\Lambda_s)$ and $\rho(\Lambda_s) < |z|$, $z$ is an isolated point
of $\sigma(\Lambda_s)$ and is an eigenvalue of $\Lambda_s$ of finite
algebraic multiplicity.  Also, there exists $b_s < \lambda_s$ such that
\begin{equation*}
%\label{2.13}
\sigma(\Lambda_s) \setminus \{\lambda_s\} \subset \{z \in \C : |z| \le |b_s|\}.
\end{equation*}
There exists a strictly positive eigenfunction $v_s \in C^m(S)$ with
eigenvalue $\lambda_s >0$, and $\lambda_s$ is an algebraically simple eigenvalue
of $\Lambda_s$.  If $u \in Y$ and $u(t) >0$ for all $t \in S$,
there exists a positive real number $\alpha$ (dependent on $u$) such that
\begin{equation}
\label{2.14}
\lim_{k \rightarrow \infty} \Big(\frac{1}{\lambda_s}\Big)^k \Lambda_s^k (u) = \alpha
v_s,
\end{equation}
where the convergence is in the norm topology on $Y$.
\end{thm}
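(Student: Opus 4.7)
The plan is to follow the classical quasi-compactness plus Krein-Rutman scheme adapted to transfer operators, since the result is essentially the content of Corollary 6.6 of the cited Nussbaum 2016 paper.

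The first step is a Lasota-Yorke type estimate: for every integer $\nu \ge 1$ and every $f \in C^m(S)$,
\[
\|\Lambda_s^\nu f\|_{C^m} \;\le\; A\, C_0^m\, \kappa^{m\nu}\, \|L_s^\nu \mathbf{1}\|_{C(S)}\, \|f\|_{C^m} \;+\; B_\nu\, \|f\|_{C^{m-1}},
\]
for a constant $A$ depending only on $m$, $M_0$ and the $C^m$-data of the $g_p$ and $\theta_p$. The leading-order contribution to $D^m(\Lambda_s^\nu f)$ is $\sum_{\omega\in\Omega_\nu} g_\omega(t)^s (\theta_\omega'(t))^m f^{(m)}(\theta_\omega(t))$; its modulus is bounded pointwise by $C_0^m \kappa^{m\nu}$ times a positive transfer operator applied to $|f^{(m)}|$, and Fa\`a di Bruno's formula produces a cascade of lower-order terms that can be controlled by $\|f\|_{C^{m-1}}$. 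Because the embedding $C^m(S) \hookrightarrow C^{m-1}(S)$ is compact, Nussbaum's formula expressing the essential spectral radius via the Kuratowski measure of noncompactness then yields $\rho(\Lambda_s)^\nu \le A C_0^m \kappa^{m\nu} \|L_s^\nu \mathbf{1}\|_{C(S)}$; taking $\nu$-th roots and sending $\nu\to\infty$ delivers $\rho(\Lambda_s) \le \kappa^m \rmf(L_s)$.

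I would next identify $\rmf(\Lambda_s)$ with $\rmf(L_s)$. The inequality $\rmf(\Lambda_s)\ge \rmf(L_s)$ is obtained by evaluating iterates on $\mathbf{1}\in C^m(S)$ and using positivity, which gives $\|L_s^\nu \mathbf{1}\|_{C(S)} \le \|\Lambda_s^\nu \mathbf{1}\|_{C^m}$; the reverse inequality is a consequence of the Lasota-Yorke bound together with the fact that $B_\nu$ grows no faster than $\|L_s^\nu \mathbf{1}\|_{C(S)}$ when the lower-order terms are tracked carefully. Points of $\sigma(\Lambda_s)$ lying outside the essential spectrum are then isolated eigenvalues of finite algebraic multiplicity by the Fredholm-type results recalled in Section~3 of Nussbaum 1970. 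A Krein-Rutman argument supplies a nonzero eigenfunction $v_s$ at $\lambda_s$: $\Lambda_s$ is positive with respect to the normal cone $K \subset C^m(S)$ of nonnegative functions, $\mathbf{1}$ lies in the interior of $K$, and quasi-compactness forces $\lambda_s$ to be attained by some $v_s \in K \setminus \{0\}$. To upgrade $v_s\ge 0$ to strict positivity, and to obtain algebraic simplicity of $\lambda_s$ together with the spectral gap $b_s < \lambda_s$, I would apply Birkhoff's contraction theorem in the Hilbert projective metric: under (H1)--(H2) a high enough iterate $\Lambda_s^\nu$ maps $K \setminus \{0\}$ into the interior of $K$ with finite projective diameter, hence is a strict contraction in the Hilbert metric, forcing uniqueness of the positive eigenray and exponential contraction of the non-peripheral spectrum. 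The limit \eqref{2.14} then follows from simplicity of $\lambda_s$ combined with the spectral gap and the fact that any strictly positive $u$ has nonzero component along $v_s$.

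The main obstacle is the combinatorial bookkeeping inside the Lasota-Yorke estimate: Fa\`a di Bruno's formula for $D^m(\theta_\omega)$ and $D^m(g_\omega^s)$ produces a large number of cross-terms involving products of derivatives of the individual $\theta_p$ and $g_p$, and one must verify that a \emph{single} top-order term carries the full $\kappa^{m\nu}$ decay while every other contribution can be absorbed into the $\|f\|_{C^{m-1}}$ part with constants whose growth in $\nu$ does not spoil the limit. Once this quasi-compactness estimate is in hand, everything else — the bound on $\rho(\Lambda_s)$, the identification of the two spectral radii, the existence of isolated peripheral eigenvalues, the positive eigenfunction, the spectral gap, and the power-method convergence — is a relatively routine application of standard positive-operator machinery.
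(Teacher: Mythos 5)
The paper itself does not supply a proof of Theorem~\ref{thm:2.4}; it cites the result to Corollary 6.6 of \cite{Nussbaum-2016} and merely points to \cite{Nussbaum-1970} for the essential-spectrum notion. So what one can assess is whether your proposed proof is \emph{correct} and whether it is consistent with the machinery the paper otherwise builds.

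The quasi-compactness half of your plan is sound. The Lasota--Yorke estimate you write, with leading term $\sum_{\omega\in\Omega_\nu} g_\omega^s\,(\theta_\omega')^m\,f^{(m)}\circ\theta_\omega$ giving a factor bounded by $C_0^m\kappa^{m\nu}$ times a positive transfer operator applied to $|f^{(m)}|$, together with the compactness of $C^m(S)\hookrightarrow C^{m-1}(S)$ for $S$ a finite union of compact intervals, does give $\rho(\Lambda_s)\le\kappa^m\,\rmf(L_s)$ by the measure-of-noncompactness formula. The identification $\rmf(\Lambda_s)=\rmf(L_s)$ you sketch is also achievable, though your route through ``$B_\nu$ grows no faster than $\|L_s^\nu\mathbf 1\|$'' is delicate; a cleaner argument is available: once $\rho(\Lambda_s)<\rmf(\Lambda_s)$ is known, every peripheral $z\in\sigma(\Lambda_s)$ is an eigenvalue with eigenfunction $v\in C^m(S)\subset C(S)$, hence $L_s v=zv$ forces $|z|\le\rmf(L_s)$, and the easy inequality $\rmf(\Lambda_s)\ge\rmf(L_s)$ from evaluating on $\mathbf 1$ closes the loop.

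The positivity half, however, contains a genuine gap. You work with $K=\{f\in C^m(S): f\ge 0\}$ and assert two things: that $K$ is normal in $C^m(S)$, and that a high enough iterate $\Lambda_s^\nu$ maps $K\setminus\{0\}$ into $\interior(K)$ with finite projective diameter. Neither holds. For $m\ge 1$ the nonnegative cone in $C^m$ is \emph{not} normal: one can have $0\le f\le 1$ pointwise with $\|f\|_{C^m}$ arbitrarily large. More fatally, the mapping property fails: $\Lambda_s^\nu f(t)=\sum_{\omega\in\Omega_\nu}g_\omega(t)^s f(\theta_\omega(t))$ vanishes at every $t$ for any nonnegative $f$ supported in $S\setminus\bigcup_{\omega\in\Omega_\nu}\theta_\omega(S)$. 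For the continued-fraction IFS with $\B=\{1,2\}$ on $S=[0,1]$, for instance, $\bigcup_p\theta_p([0,1])=[1/3,1]$, so any $f\ge 0$ supported in $[0,1/3)$ satisfies $L_s^\nu f\equiv 0$ for all $\nu$. Thus $\Lambda_s^\nu(K\setminus\{0\})\not\subset\interior(K)$ for any $\nu$, $\Delta(\Lambda_s^\nu;K,K)=\infty$, and Birkhoff's contraction theorem cannot be invoked. This is precisely the obstruction the paper calls ``hidden positivity'': the natural cone of nonnegative functions is the \emph{wrong} cone for transfer operators. The remedy, which the rest of the paper uses systematically and which is the route in \cite{Nussbaum-2016}, is to work instead with the log-Lipschitz cones $K(M;S)=\{f: f(t_1)\le e^{M|t_1-t_2|}f(t_2)\}$ in the Lipschitz (or $C^m$) scale. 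Under (H1)--(H2), for $\nu$ large enough that $C_0\kappa^\nu<1$ and $M_2>sM_0(\nu)/(1-C_0\kappa^\nu)$, one has $L_s^\nu(K(M_2;S))\subset K(M_1;S)$ with $M_1<M_2$; the inclusion $K(M_1)\hookrightarrow K(M_2)$ has finite Hilbert-metric diameter (Lemma 2.12 of \cite{F}), so the Birkhoff--Hopf machinery applies and delivers the strictly positive eigenfunction $v_s$, its algebraic simplicity, the spectral gap $b_s<\lambda_s$, and the convergence \eqref{2.14}. Your argument should be rebuilt on these $K(M;S)$ cones; with the nonnegative cone the second half of the proof does not go through.
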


\begin{remark}
\label{rem:2.5}
In our work here, it will be important to have estimates on
\begin{equation*}
\sup \Big\{\frac{|(d^j v_s/dt^j)(t)|}{v_s(t)}: t \in S\Big\},
\end{equation*}
where $1 \le j \le m$.  Note that if we take $u:=1$ in 
\eqref{2.14}, we find that for $t \in S$ and $1 \le j \le m$,
\begin{equation}
\label{2.15}
\frac{|(d^jv_s/dt^j)(t)|}{v_s(t)} = \lim_{k \rightarrow \infty}
\frac{|\sum_{\omega \in \Omega_k} (d^j g_{\omega}^s/dt^j)(t)|}
{\sum_{\omega \in \Omega_k} g_{\omega}(t)^s},
\end{equation}
and the convergence in \eqref{2.15} is uniform in $t \in S$.
If $u$ is as in \eqref{2.14}, we also obtain from \eqref{2.14} that
\begin{equation*}
%\label{2.16}
\lim_{k \rightarrow \infty} \frac{\Lambda_s^{k+1}u}{\Lambda_s^k u} = \lambda_s,
\end{equation*}
where the convergence to the constant function $\lambda_s$ is in the norm
topology on $Y$.
\end{remark}

%Recall that, under the given hypotheses, $\rmf(L)$, the spectral radius of $L$ is
%strictly positive and there exists a unique (normalized) strictly positive
%eigenfunction $v$ such that $v$ is Lipschitz, and $(Lv)(t) = \rmf(L) v(t)$
%for $a \le t \le b$. If $\theta_i(\cdot)$ and $g_i(\cdot)$ are $C^m$, 
%$m \ge 1$,
%then $v(\cdot)$ is $C^m$.  Also, $\rmf(L)$ is an algebraically simple eigenvalue
%of $L$. If $\Lambda$ denotes $L$ considered as a bounded linear map of
%$\Lip([a,b]) \to \Lip([a,b]):=Y$, there exists $r_r < \rmf(L)$ such
%that $\sigma(\Lambda):=$ the spectrum of $\Lambda: Y \to Y$ satisfies
%\begin{equation*}
%\sigma(\Lambda) \subset \{\rmf(L)\} \cup \{z \in \C: |Z| \le r_1\}
%\end{equation*}

\section{Approximation of the spectral radius of $L_s$}
\label{sec:approx}
Returning to the notation of \eqref{2.1}, we want to approximate $\rmf(L_s)$ by
the spectral radius of an appropriate finite dimensional linear map
$\bL_s$. To do so, we assume that $S = [a,b]$ in (H1) and (H2), with $a <b $
and let $\hat S$ denote a union of
disjoint subintervals $[a_i, b_i] \subset [a,b]$, $i=1, \ldots, I$.
We also assume throughout this section that $\theta_p(\hat S) \subset \hat S$
for $1 \le p \le n$.
Further subdivide each interval $[a_i, b_i]$ into $N_i$ equally spaced
subintervals $[t_{j-1}^i,t_j^i]$, $j =1, \ldots, N_i$ of width
\begin{equation}
\label{2.30}
h_i = (b_i-a_i)/N_i, \qquad 1 \le i \le I.
\end{equation}
Set $h = \max_{1 \le i \le I} h_i$.

Next let $\{c_{j,k}^i\}_{k=0}^r \in [t_{j-1}^i, t_j^i]$, with $c_{j,0}^i =
t_{j-1}^i$, $c_{j,r}^i = t_j^i$, and $c_{j,k}^i < c_{j,k+1}^i$ for $0 \le k <
r$.  Given values $\{F_{j,k}^i\} = F(c_{j,k}^i)$, we then define on
$\hat S$, a piecewise polynomial $\cF$ as follows: For $t_{j-1}^i \le x \le
t_{j}^i$, $1 \le j \le N_i$, and $1 \le i \le I$,
\begin{equation}
\label{2.31}
\cF|_{[t_{j-1}^i, t_j^i]}(x) = \cF_j^i(x) = \sum_{k=0}^r l_{j,k}^i(x) F_{j,k}^i,
\end{equation}
where
\begin{equation*}
l_{j,k}^i(x) = \frac{\prod_{\substack{l=0 \\ l \neq k}}^r (x-c_{j,l}^i)}
{\prod_{\substack{l=0 \\ l \neq k}}^r(c_{j,k}^i -c_{j,l}^i)}.
\end{equation*}
Since $c_{j,r}^i = c_{j+1,0}^i$, $\cF \in \bV_h^r$, the space of
continuous piecewise polynomials of degree $\le r$, 
whose degrees of freedom are the $Nr+I:=Q$ values $F_{j,k}^i$,
where $N = \sum_{i=1}^I N_i$.

We note that we can simplify our expressions by choosing points
$\{\hat c_k\}_{k=0}^r \in [-1,1]$, with $\hat c_k < \hat c_{k+1}$ for
$0 \le k < r$, $\hat c_0 = -1$ and $\hat c_r =1$. If we then define
\begin{equation*}
c_{j,k}^i = t_{j-1}^i + h_i(1 + \hat c_k)/2,
\end{equation*}
and write $x \in [t_{j-1}^i,t_j^i] \subset [a_i,b_i]$ in the form $x = t_{j-1}^i +
h_i(1 + \hat x)/2$, where $\hat x \in [-1,1]$, we obtain
\begin{equation}
\label {2.32}
l_{j,k}^i(x) = \hat l_{k}(\hat x)
= \frac{\prod_{\substack{l=0 \\ l \neq k}}^r (\hat x- \hat c_l)}
{\prod_{\substack{l=0 \\ l \neq k}}^r(\hat c_{k} - \hat c_{l})}.
\end{equation}

Because we seek to make use of high order piecewise polynomials, it is
important to choose the points $\hat c_k$ to avoid the large errors
that can occur in polynomial interpolation due to Runge's phenomenon
(e.g., when equally spaced interpolation points are used).  Since, for
our analysis, we shall need the function $\cF(x)$ in \eqref{2.31} to be
continuous, we choose the points $\hat c_k$ to be the extended
Chebyshev points in $[-1,1]$ given by
\begin{equation}
\label{hatck}
\hat c_k = - \cos \Big(\frac{2k+1}{2r+2} \pi\Big)\Big/
\cos\Big(\frac{\pi}{2r+2}\Big), \quad k=0, \ldots, r,
\end{equation}
obtained by rescaling the usual Chebyshev nodes.  Then
\begin{equation}
\label{2.33}
c_{j,k}^i = t_{j-1}^i + \frac{h_i}{2} 
\Big(1 - \Big[\cos \Big(\frac{2k+1}{2r+2}\pi\Big)\Big/
\cos \Big(\frac{\pi}{2r+2}\Big)\Big]\Big), \quad k=0, \ldots, r.
\end{equation}
We note that another possible choice is to use the augmented Chebyshev points,
consisting of the roots of the Chebyshev polynomial of degree $r-1$ shifted
to the interval $[t_{j-1}^i, t_j^i]$ plus the endpoints $t_{j-1}^i$ and $t_j^i$.

With this notation, we can now define, for $s >0$, the linear map
$\bL_s: \R^Q \to \R^Q$. If $\bg = \{F_{j,k}^i\} \in \R^Q$, we define
\begin{equation*}
%\label{2.34}
\bL_s(\bg)(c_{j,k}^i) = \sum_{p=1}^n g_p(c_{j,k}^i)^s \cF(\theta_p(c_{j,k}^i)),
\end{equation*}
where $\cF$ is defined above.
Equivalently, we can also think of the operator $\bL_s$ as a map from the space
$\bV_h^r \to \bV_h^r$, if we replace $\bg$ by $\cF$, and given the values
$\{G_{j,k}^i\} = \sum_{p=1}^n g_p(c_{j,k}^i)^s \cF(\theta_p(c_{j,k}^i))$,
we define $\cG(x)$ as follows: For $t_{j-1}^i \le x \le t_j^i$, $1 \le j \le
N_i$, and $1 \le i \le I$,
\begin{equation*}
\cG|_{[t_{j-1}^i,t_j^i]}(x) = \cG_j^i(x) = \sum_{k=0}^r l_{j,k}^i(x) G_{j,k}^i.
\end{equation*}

Given a positive real number $M$, we next define $K_M \subset \R^{Q}$
as $\{\bg \in \R^{Q}\}$ such that for all $\xi = c_{j,k}^i$ and
$\eta = c_{j',k'}^{i'}$, with $1 \le i,i' \le I$, $1 \le j \le N_i$,
$1 \le j' \le N_{i'}$, and $0 \le k,k' \le r$,
\begin{equation}
\label{2.35}
F(\xi) \le \exp(M|\xi- \eta|) F(\eta). 
\end{equation}
Note that to verify \eqref{2.35}, it suffices to verify it whenever 
$\xi$ and $\eta$ are two consecutive points in the linear ordering
inherited from $\R$ of the points $\{ c_{j,k}^i\}$.

One can easily verify that if $\bg \in K_M$, then either (a) $F_{j,k}^i =0$
for all $1 \le i \le I$, $1 \le j \le N_i$, and $0 \le k \le r$, or (b)
$F_{j,k}^i >0$ for all $i,j,k$ in this range. In case (b), one has for
all $1 \le i,i' \le I, \, 1 \le j \le N_i, \, 1 \le j' \le N_{i'}, 
\, 0 \le k,k' \le r$,
\begin{equation}
\label{2.36}
|\ln(F_{j,k}^i) - \ln(F_{j',k'}^{i'})| \le M|c_{j,k}^i- c_{j',k'}^{i'}|.
\end{equation}
and \eqref{2.36} implies that \eqref{2.35} holds.

%Furthermore, one can check that $K_M$ is a closed cone, i.e., $K_M$ is a
%closed convex set, $\{t F: t \ge 0, F \in K_M\} \subset K_M$ and
%$K_M \cap (-K_M) = \{0\}$.

One might hope to prove that the spectral radius $\rmf(\bL_s)$ of
$\bL_s$ closely approximates the spectral radius $\rmf(L_s)$, and we
shall see that this is true if the Lipschitz constant $C_0 \kappa$ in
(H2) (corresponding to the case $\nu=1$ and the operator $\rmf(L_s)$)
and the constant $h$ in \eqref{2.30} are sufficiently small.  However,
if $C_0 \kappa$ is not sufficiently small, we can instead work with
the operator $L_s^{\nu}$, where $\nu$ is a positive integer, and the
corresponding Lipschitz constant in (H2) is then $C_0 \kappa^{\nu}$. This
in turn means that we will have to replace $\bL_s$ by $\bL_{s,\nu}:
\R^{Q} \to \R^{Q}$, where, $\nu$ is a positive integer and in our
earlier notation,
\begin{equation}
\label{3.8}
(\bL_{s,\nu}(\bg))(c_{j,k}^i) = \sum_{\omega \in \Omega_{\nu}}
g_{\omega}(c_{j,k}^i)^s \cF(\theta_{\omega}(c_{j,k}^i)).
\end{equation}

\section{Cones, Positive eigenvectors, and Birkhoff's 
contraction constant}
\label{sec:theory-c}
As noted in Section~\ref{sec:intro}, we would like to have the
approximating matrices defined in the previous section mimic the
properties of the infinite dimensional, bounded linear operator $L_s$,
which means they should satisfy the conclusions of the Perron theorem
for positive matrices, i.e., they should have an eigenvalue of
multiplicity one equal to the spectral radius of the matrix with
corresponding positive eigenvector and that all other eigenvalues of
the matrix should have modulus less than the spectral radius.
However, the matrix $\bL_s$ defined in the previous section is not
even a nonnegative matrix once the degree $r$ of the piecewise
polynomial is $>1$.  The reason for this, seen by constructing the
matrix, is that the Lagrange basis functions for a polynomial of
degree $>1$ are not always positive.

The remedy, also used in the case $r=1$, when the resulting matrix was
nonnegative, but not {\it primitive} or {\it irreducible}, is to base
the analysis on a cone different from the usual cone of nonnegative
functions. More precisely, by using the cone $K_M$ defined in the
previous section, we shall show that the conclusions of the classical
Perron theorem also hold for the matrices of this paper.

To outline our method of proof, it is convenient to describe, at least
in the finite dimensional case, some basic definitions and classical
theorems concerning linear maps $L: \R^N \to \R^N$ which leave a cone
$K \subset \R^N$ invariant.  In doing so, we shall closely follow the
analogous description in \cite{hdcomp1}. Recall that a closed subset $K$ of
$\R^N$ is called a closed cone if (i) $ax + by \in K$ whenever $a
\ge 0$, $b \ge 0$, $x \in K$ and $y \in K$ and (ii) if $x \in
K \setminus \{0\}$, then $-x \notin K$.  If $K$ is a
closed cone, $K$ induces a partial ordering on $\R^N$ denoted by
$\le_{K}$ (or simply $\le$, if $K$ is obvious) by $u
\le_{K} v$ if and only if $v-u \in K$.  If $u,v \in K$, we
shall say that $u$ and $v$ are {\it comparable} (with respect to
$K$) and we shall write $u \sim_{K} v$ if there exist positive
scalars $a$ and $b$ such that $v \le_{K} au$ and $u \le_{K} b
v$. {\it Comparable with respect to} $K$ partitions $K$ into
equivalence classes of comparable elements.  We shall henceforth
assume that $\interior(K)$, the interior of $K$, is nonempty.
Then an easy argument shows that all elements of $\interior(K)$
are comparable.  Generally, if $x_0 \in K$ and $K_{x_0}: = \{x
\in K : x \sim_{K} x_0\}$, all elements of $K_{x_0}$ are
comparable.

Following standard notation, if $u,v \in K$ are comparable elements, we
define \begin{align*}
M(u/v;K) &= \inf\{\beta >0 : u \le \beta v\},
\\
m(u/v;K) &= M(v/u;K)^{-1} = \sup\{\alpha >0 : \alpha v \le u\}.
\end{align*}
If $u$ and $v$ are comparable elements of $K \setminus \{0\}$, we define
Hilbert's projective metric $d(u,v;K)$ by
\begin{equation*}
d(u,v;K) = \ln(M(u/v;K)) + \ln M(v/u;K)).
\end{equation*}
We make the convention that $d(0,0;K) =0$. If $x_0 \in K \setminus
\{0\}$, then for all $u,v,w \in K_{x_0}$, one can prove that (i)
$d(u,v;K) \ge 0$, (ii) $d(u,v;K) = d(v,u;K)$, and (iii)
$d(u,v;K) + d(v,w;K) \ge d(u,w;K)$. Thus $d$ restricted to
$K_{x_0}$ is almost a metric, but $d(u,v;K) =0$ if and only if $v =
tu$ for some $t >0$ and generally, $d(su,tv;K) = d(u,v;K)$ for all
$u,v \in K_{x_0}$ and all $s >0$ and $t >0$.  If $\|\cdot\|$ is any norm
on $\R^N$ and $S:= \{ u \in \interior(K): \|u\|=1\}$ (or, more generally,
if $x_0 \in K \setminus \{0\}$ and $S = \{x \in K_{x_0} : \|x\| =1\}$,
then $d(\cdot, \cdot; K)$, restricted to $S \times S$, gives a metric on
$S$; and it is known that $S$ is a complete metric space with this metric.

With these preliminaries, we can describe a special case of the
Birkhoff-Hopf theorem. We refer to \cite{Birkhoff}, \cite{V}, and
\cite{Samelson} for the original papers and to \cite{E-N-2} and
\cite{E-N-1} for an exposition of a general version of this theorem
and further references to the literature.  We remark that
P.~P.~Zabreiko, M.~A.~Krasnosel$'$skij, Y.~V.~Pokornyi, and
A.~V.~Sobolev independently obtained closely related theorems; and we
refer to \cite{Y} for details.  If $K$ is a closed cone as above, $S =
\{x \in \interior(K): \|x\|=1 \}$, and $L: \R^N \to \R^N$ is a linear
map such that $L(\interior(K)) \subset \interior(K)$, we define
$\Delta(L;K)$, {\it the projective diameter} of $L$ by
\begin{multline*}
\Delta(L;K) = \sup\{d(Lx,Ly;K): x,y \in K \text{ and }
 Lx \sim_{K} Ly\}
\\
= \sup\{d(Lx,Ly;K): x,y \in \interior(K)\}.
\end{multline*}
The Birkhoff-Hopf theorem implies that if $\Delta:= \Delta(L;K) < \infty$,
then $L$ is a contraction mapping with respect to Hilbert's projective metric.
More precisely, if we define $\lambda = \tanh(\tfrac{1}{4} \Delta) <1$, then
for all $x,y \in K \setminus \{0\}$ such that $x \sim_{K} y$, we have
\begin{equation*}
d(Lx,Ly;K) \le \lambda d(x,y;K),
\end{equation*}
and the constant $\lambda$ is optimal.

If we define $\Phi:S \to S$ by $\Phi(x) = L(x)/\|L(x)\|$, it follows that
$\Phi$ is a contraction mapping with a unique fixed point $v \in S$, and
$v$ is necessarily an eigenvector of $L$ with eigenvalue $r(L) := r=$
the spectral radius of $L$.  Furthermore, given any $x \in \interior(K)$,
there are explicitly computable constants $M$ and $c <1$ (see Theorem 2.1
in \cite{E-N-2}) such that for all $k \ge 1$,
\begin{equation*}
\|L^k(x)/\|L^k(x)\| -v\| \le Mc^k;
\end{equation*}
and the latter inequality is exactly the sort of result we need.  Furthermore,
it is proved in Theorem 2.3 of \cite{E-N-2} that $r=r(L)$ is an
algebraically simple eigenvalue of $L$ and that if $\sigma(L)$ denotes
the spectrum of $L$ and $q(L)$ denotes the {\it spectral clearance of } $L$,
\begin{equation*}
q(L):= \sup\{|z|/r(L): z \in \sigma(L), z \neq r(L)\},
\end{equation*}
then $q(L) <1$ and $q(L)$ can be explicitly estimated.

The main issue, then, is to find a suitable cone satisfying the hypotheses
outlined above.  We shall show in the sections that follow that the
cone $K_M$, defined in the previous section, is such a cone.  To do
so, we shall show that there exists $M^{\prime}$, $0 < M^{\prime} <
M$, such that $L(K_M\setminus\{0\}) \subset
K_{M^{\prime}}\setminus\{0\}$.  After correcting the typo in the
formula for $d_2(f,g)$ on page 286 of \cite{F}, it follows from Lemma
2.12 on page 284 of \cite{F} that
\begin{equation*}
\sup\{d(f,g;K_M) : f,g \in K_{M^{\prime}}\setminus\{0\}\} 
\le 2 \ln\Big(\frac{M + M^{\prime}}{M- M^{\prime}}\Big) 
+ 2 \exp(M^{\prime}(b-a)) < \infty,
\end{equation*}
where now $S:=[a, b]$ in (H1) and (H2) (c.f. Section~\ref{sec:approx}).
This implies that $\Delta(L;K_M) < \infty$, which in turn implies that
$L$ has a normalized eigenvector $v \in K_{M^{\prime}}$ with positive
eigenvalue $r = r(L) =$ the spectral radius of $L$.  
Furthermore, $r$ has algebraic multiplicity 1, $q(L) <1$, and
$\underset{k \to \infty}{\lim} \|L^k(x)/\|L^k(x)\| -v\| =0$ for all $x \in K_M
\setminus\{0\}$.  Thus it suffices to prove, for an appropriate map $L$,
that for some $M^{\prime} < M$,
\begin{equation}
\label{key-map-prop}
L(K_M\setminus\{0\}) \subset K_{M^{\prime}}\setminus\{0\}.
\end{equation}
We note that if the map $L$ satisfies \eqref{key-map-prop}, then it is
not difficult to show that $L(K_M\setminus\{0\})\subset \interior(K_M)$.
An alternative approach is then to apply Theorem 4.4 of
\cite{Vandergraft}, which concludes that $\rmf(L)$ is a simple
eigenvalue, greater than the magnitude of any other eigenvalue, and
that an eigenvector corresponding to $\rmf(L)$ lies in $\interior(K)$.  In any
case, the key step is to show for an appropriate matrix $L$ and cone
$K_M$, that \eqref{key-map-prop} is satisfied.

A key part of the paper is to obtain upper and lower bounds on
$R(L_s)$ using the approximations developed in this paper.  To do
so, we will use an extension to cones of a well known result for positive
matrices.
\begin{lem}
\label{lem:cone-compare}
Suppose $L(K_M\setminus\{0\}) \subset  K_{M^{\prime}}$ and 
$\cV_s \in K_M\setminus\{0\}$. Then if there exists positive constants
$\alpha$ and $\beta$ such that
\begin{equation*}
\alpha \cV_s \le_{K_M} L \cV_s \le_{K_M} \beta \cV_s,
\end{equation*}
then $\alpha \le R(L) \le \beta$.
\end{lem}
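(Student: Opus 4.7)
The plan is to iterate the two-sided cone inequality, convert the iterated inequality into a scalar bound on $\|L^k\cV_s\|$, and then extract the spectral radius via Gelfand's formula. The guiding context is the standing hypothesis $M'<M$ from \eqref{keycond-intro}, which gives $K_{M'}\subset\interior(K_M)\subset K_M$, so that the assumption $L(K_M\setminus\{0\})\subset K_{M'}$ implies $L(K_M)\subset K_M$ and $L$ is monotone with respect to $\le_{K_M}$. Applying $L$ to $\alpha\cV_s\le_{K_M}L\cV_s\le_{K_M}\beta\cV_s$ and iterating, one obtains
\[
\alpha^k\cV_s\le_{K_M}L^k\cV_s\le_{K_M}\beta^k\cV_s\qquad(k\ge 1).
\]

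Next I would transfer these cone inequalities to coordinatewise bounds at the interpolation nodes. As noted just after \eqref{2.35}, any nonzero element of $K_M$ has all of its nodal values $F_{j,k}^i$ strictly positive; in particular, a vector in $K_M$ has nonnegative coordinates. Applied to the two $K_M$-nonnegative vectors $\beta^k\cV_s-L^k\cV_s$ and $L^k\cV_s-\alpha^k\cV_s$, this yields, coordinatewise,
\[
\alpha^k(\cV_s)_i\le(L^k\cV_s)_i\le\beta^k(\cV_s)_i.
\]
Because $\cV_s\in K_M\setminus\{0\}$ has all positive coordinates and each side is nonnegative, taking the maximum over $i$ gives
\[
\alpha^k\|\cV_s\|_\infty\le\|L^k\cV_s\|_\infty\le\beta^k\|\cV_s\|_\infty.
\]

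The last step is to take $k$-th roots and pass to the limit using $\|L^k\cV_s\|_\infty^{1/k}\to\rmf(L)$, and this is the one place that requires care, since $\cV_s$ itself need not lie in $\interior(K_M)$. The remedy is to note that $L\cV_s\in K_{M'}\subset\interior(K_M)$, so the Birkhoff--Hopf machinery recalled in Section~\ref{sec:theory-c} applies to the shifted iterates $L^k\cV_s=L^{k-1}(L\cV_s)$: their normalizations converge to the Perron eigenvector $v$ of $L$ with eigenvalue $\rmf(L)$, and consequently $\|L^{k+1}\cV_s\|/\|L^k\cV_s\|\to\rmf(L)$, whence the Gelfand limit $\|L^k\cV_s\|_\infty^{1/k}\to\rmf(L)$. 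Letting $k\to\infty$ in the displayed norm inequality then produces $\alpha\le\rmf(L)\le\beta$. The main obstacle is precisely this Gelfand step; the rest of the proof is bookkeeping with the monotonicity of $L$ and the componentwise positivity dichotomy built into $K_M$.
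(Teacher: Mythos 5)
The paper does not actually provide a proof of Lemma~\ref{lem:cone-compare}; it is stated as ``an extension to cones of a well known result for positive matrices'' and left to the reader. So there is nothing in the source to compare your argument against, only its correctness to assess.

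Your proof is sound. The chain of steps---monotonicity of $L$ with respect to $\le_{K_M}$ (from $L(K_M)\subset K_M$, which follows since $K_{M'}\subset K_M$), iteration to $\alpha^k\cV_s\le_{K_M}L^k\cV_s\le_{K_M}\beta^k\cV_s$, passage to coordinatewise bounds using the dichotomy (all coordinates zero or all positive) noted after \eqref{2.35}, the sup-norm sandwich, and the Gelfand limit for $\|L^k\cV_s\|_\infty^{1/k}$---is correct. The one place you should verify rather than assert is that $L\cV_s\ne 0$ so the iterates never die; this follows from the left inequality $\alpha\cV_s\le_{K_M}L\cV_s$, which forces the coordinates of $L\cV_s$ to dominate $\alpha(\cV_s)_i>0$. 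A small imprecision: $K_{M'}\subset\interior(K_M)$ is false at the origin; the correct statement is $K_{M'}\setminus\{0\}\subset\interior(K_M)$, which is all you actually use. You could also simplify the Gelfand step: the paragraph preceding the lemma already records that $\lim_{k\to\infty}\|L^k(x)/\|L^k(x)\|-v\|=0$ for \emph{all} $x\in K_M\setminus\{0\}$, so the ``shift to $L\cV_s$'' maneuver is not needed---apply it directly to $x=\cV_s$ and then invoke the ratio-to-root theorem as you do. Note finally that Lemma~\ref{lem:4.3} of the paper, which proves $\rmf(L)=\lim_k\|L^kw\|^{1/k}$, is formulated for $L(P)\subset P$ with $P$ the nonnegative cone; that hypothesis fails for $\bL_{s,\nu}$ (its entries can be negative), so your route through the $K_M$-contraction is the appropriate one rather than a direct citation of that lemma.
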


\section{Theory for the discrete approximation}
\label{sec:theory-d}
The main result of this section, Theorem~\ref{thm:5.6n}, is to show that
under appropriate hypotheses, the matrix operator $\bL_{s,\nu}$ defined
in Section~\ref{sec:approx}, satisfies
\begin{equation*}
\bL_{s,\nu}(K_M(T) \setminus \{0\}) \subset K_{M'}(T)\setminus \{0\},
\end{equation*}
where $T$ will be as in \eqref{T-def} below.
%\begin{thm}
%\label{2.14}
%Let $L_s: X = C(S) \to X$ be given by \eqref{2.1}, where $\theta_i$ and
%$g_i$ satisfy hypotheses (H1) and (H2). Let
%\begin{equation*}
%K(M) = \{u \in X: u(x_2) \le u(x_1) \exp{(M|x_2 - x_1|)}, \quad \text{for all}
%\quad x_1, x_2 \in S \}.
%\end{equation*}
%If $M_2 > M_0/(1 - \kappa)$, then there exists a unique $v \in K(M_2)$
%with $\|v\|=1$, such that $Lv = \|Lv\|v$. Furthermore, for each
%$w \in K(M_2) \setminus \{0\}$,
%\begin{equation*}
%\lim_{k \rightarrow \infty} \Big\|\frac{L^k w}{\|L^kw\|} -v \Big\| =0.
%\end{equation*}
%\end{thm}

Throughout this section, we shall assume that (H1) and (H2) are satisfied
and that $S = [a,b]$ with $a <b$, where $S$ is as in (H1) and (H2). We shall
also assume that (H3), given below, is satisfied, and we shall use the 
notation of (H1), (H2), and (H3).

(H3): For a given positive integer $\nu$, there exist pairwise disjoint,
nonempty compact intervals $[a_i,b_i] \subset S$, $1 \le i \le I$,
(where $S:= [a,b]$ is as in (H0) - (H2)) with the following property:
For every $\omega \in \Omega_{\nu}$, there exists $i =i(\omega)$, $1 \le i \le
I$, such that  $\theta_{\omega}(S) \subset [a_{i},b_{i}]$.

\begin{remark}
\label{rem:disjoint} 
Assume that (H0)-(H2) are satisfied and that for some positive integer $\nu'$,
$\theta_{\omega_1}(S)$ and $\theta_{\omega_2}(S)$ are disjoint whenever
$\omega_1$ and $\omega_2$ are unequal elements of $\Omega_{\nu'}$.  Label the
elements of $\Omega_{\nu'}$ as $\omega_i$, $1 \le i \le I$, and define
$[a_i,b_i] = \theta_{\omega_i}(S)$.  Then for all positive integers $\nu \ge
\nu'$, (H3) is satisfied.  More generally, one could, for $1 \le i \le I$,
take $[a_i,b_i]$ to be any interval contained in $[a,b]$ such that
$\theta_{\omega_i}(S) \subset [a_i,b_i]$ as long as $[a_i,b_i] \cap [a_j,b_j]
= \emptyset$ for $1 \le i,j \le I$.  Note that (H3) is also trivially
satisfied if we take $I =1$ and $[a_1,b_1] = [a,b]$.
\end{remark}

\begin{remark}
\label{rem:ordered}
In the context of (H3), it is possible by relabeling to assume that
$b_i < a_{i+1}$ for $1 \le i < I$, so the intervals are linearly ordered
as subsets of $\R$. Thus we shall make this assumption if convenient.
\end{remark}

We now follow the notation of Section~\ref{sec:approx}.
If we define
\begin{equation}
\label{T-def}
T:=\{c_{j,k}^i : 1 \le i \le I, 1 \le j \le N_i, 0 \le k \le r\},
\end{equation}
then $T$ is a finite subset of $\R$ and a compact metric space.  Recall that
we consider the finite dimensional vector space $V= V(T)$ of dimension $Q = Nr
+I$ of all maps $F:T \to \R$ and $K_M(T)$ is then defined as in
Section~\ref{sec:theory-c} or \eqref{2.35}, i.e.,
$F \in K_M(T) \setminus \{0\}$ if and only if
\begin{equation*}
|\ln(F(\xi)) - \ln(F(\eta))| \le M|\xi - \eta|
\end{equation*}
for all $\xi, \eta \in T$.  Note that $V$ is linearly isomorphic to $\R^Q$.

{\it A central question for our approach
is to determine under what conditions on $\bL_{s,\nu}$ (see \eqref{3.8})
$\bL_{s,\nu}(K_M(T)) \subset K_{M'}(T)$ for
some $M$, $M'$ with $0 < M' < M$.} To do so, we begin by recalling some
classical results.

\begin{lem} (See \cite{Markov}, \cite{Rivlin}, pages 121-123, or
  \cite{Shadrin}).
\label{lem:3.1}
Let $p(t)$ be a real-valued polynomial of degree $\le r$. Then
\begin{equation*}
\max_{-1 \le t \le 1} |p^{\prime}(t)| \le r^2 \max_{-1 \le t \le 1} |p(t)|.
\end{equation*}
\end{lem}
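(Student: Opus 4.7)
The plan is to prove the classical Markov brothers' inequality by comparison with the Chebyshev polynomial $T_r(t) = \cos(r \arccos t)$, which is extremal in the sense that $\|T_r\|_\infty = 1$ on $[-1,1]$ but $T_r'(\pm 1) = \pm r^2$. By homogeneity I may normalize so that $\|p\|_\infty := \max_{-1 \le t \le 1}|p(t)| = 1$, and the goal becomes $|p'(t_0)| \le r^2$ for every $t_0 \in [-1,1]$. I would split into the interior case $|t_0| < 1$ and the endpoint case $t_0 = \pm 1$.

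For the interior case, the cleanest route is via Bernstein's inequality. The substitution $t = \cos\theta$ converts $p$ into a trigonometric polynomial $P(\theta) := p(\cos\theta)$ of degree at most $r$ with $\|P\|_\infty \le 1$. The classical Bernstein inequality for trigonometric polynomials (proved by a short contour / interpolation argument) gives $|P'(\theta)| \le r$, and the chain rule yields $|p'(t)| \le r/\sqrt{1-t^2}$ for $|t| < 1$. In particular $|p'(t_0)| \le r^2$ whenever $|t_0| \le \sqrt{1 - 1/r^2}$, so only a neighborhood of the endpoints remains.

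For the endpoint $t_0 = 1$ (the case $t_0 = -1$ follows by the symmetry $t \mapsto -t$), I argue by contradiction. Suppose $p'(1) > r^2$. After first replacing $p$ by $p - \varepsilon T_r$ and letting $\varepsilon \downarrow 0$ at the end, I may assume strict inequality $|p(t_k)| < 1$ at the Chebyshev extrema $t_k = \cos(k\pi/r)$, $0 \le k \le r$, where $T_r(t_k) = (-1)^{r-k}$. Consider $q(t) := T_r(t) - p(t)$; at consecutive extrema $q$ takes values of strictly alternating sign, producing at least $r$ distinct zeros in the open intervals $(t_{k+1}, t_k)$. Since $\deg q \le r$, these account for all roots of $q$, and none lies at $t = 1$. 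On the other hand, $q(1) = 1 - p(1) \ge 0$ and $q'(1) = r^2 - p'(1) < 0$, so $q$ is strictly positive just to the left of $1$, which contradicts the sign already forced between the largest root and the point $1$.

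The main obstacle is the endpoint argument: one must count zeros with multiplicity and handle the degenerate situation where $p(t_k) = \pm 1$ at several Chebyshev extrema simultaneously. The perturbation by $\varepsilon T_r$ is the crucial trick that reduces everything to the generic case of strict alternation, after which the sign-counting is clean and the limit $\varepsilon \to 0$ restores the full inequality. The only other subtlety is that Bernstein's trigonometric inequality must itself be justified, but for this I would simply cite the classical derivation via the identity $P(\theta+\pi/(2r)) - P(\theta-\pi/(2r))$ combined with a compactness/extremality argument on $\|P\|_\infty$.
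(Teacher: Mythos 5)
The paper itself does not prove this lemma; it is the classical A.~A.~Markov inequality and is simply cited to Markov, Rivlin, and Shadrin. So you are supplying a proof that the paper omits, and the route you choose---Bernstein's inequality in the interior plus a Chebyshev comparison near the endpoints---is indeed the standard one. However, as written, two steps do not close.

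First, your two cases do not cover $[-1,1]$. Bernstein's bound $|p'(t)| \le r/\sqrt{1-t^2}$ yields $|p'(t_0)| \le r^2$ only when $|t_0| \le \sqrt{1-1/r^2}$, as you observe, while your comparison argument is run only at the literal endpoint $t_0 = 1$ (and $-1$ by symmetry). The region $\sqrt{1-1/r^2} < |t_0| < 1$ is never addressed, and Bernstein is genuinely too weak there: its bound exceeds $r^2$ the moment $|t_0| > \sqrt{1-1/r^2}$. In the cited proofs (Rivlin, pp.~121--123, or several of Shadrin's twelve), this outer region is handled by comparing $p$ not with $T_r$ itself but with a rescaled $\lambda T_r$, where $\lambda = T_r'(t_0)/p'(t_0)$ is chosen so that $q := \lambda T_r - p$ satisfies $q'(t_0) = 0$; the resulting zero count then applies for every $t_0$ in the outer interval, not just at $t_0 = 1$. (This also requires the auxiliary fact $|T_r'| \le r^2$ on $[-1,1]$, which needs its own short justification.)

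Second, the contradiction you assert at $t_0 = 1$ does not follow from what you write. After forcing strict alternation, $q = T_r - p$ has exactly $r$ simple zeros $z_1 < \cdots < z_r$, the largest lying in $(\cos(\pi/r),\,1)$, with $q < 0$ just below $z_r$ and $q > 0$ on $(z_r, 1]$. You then note $q(1) \ge 0$ and $q'(1) < 0$, conclude that $q$ is strictly positive just left of $1$, and claim this ``contradicts the sign already forced between the largest root and the point~$1$.'' But the forced sign there is positive, so the two statements agree and nothing is contradicted. The actual impossibility lives one derivative up. Rolle's theorem gives $r-1$ zeros of $q'$ strictly between consecutive $z_i$'s; and since $q(z_r)=0$ with $q$ increasing just to the right of $z_r$ while $q'(1) = r^2 - p'(1) < 0$, the intermediate value theorem produces one further zero of $q'$ in $(z_r, 1)$. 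That makes at least $r$ zeros for $q'$, a nonzero polynomial of degree $\le r-1$, which is the genuine contradiction. This is presumably the argument you intend, but the sentence as written does not deliver it, and repairing it does not by itself repair the first gap.
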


A proof of the following estimate is given in \cite{Brutman} and refinements
for $r \ge 5$ can be found in \cite{Gunttner82}.
\begin{lem}
\label{lem:3.2}
If $\hat l_{k}(\hat x)$, $0 \le k \le r$, is defined by \eqref{2.32}, then
\begin{equation*}
 \max_{-1 \le \hat x \le 1} \sum_{k=0}^r|\hat l_{k}(\hat x)| 
\le \frac{2}{\pi} \ln(r +1) + 3/4 := \psi(r),
\end{equation*}
\end{lem}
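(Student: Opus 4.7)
The plan is to follow the standard Lebesgue-constant analysis for Chebyshev-type interpolation nodes, tracking the constants carefully enough to obtain the additive term $3/4$. Let $\omega(\hat x) = \prod_{k=0}^{r}(\hat x - \hat c_k)$. Since the extended Chebyshev nodes \eqref{hatck} are a rescaling of the ordinary $(r+1)$ Chebyshev zeros $\cos\bigl((2k+1)\pi/(2r+2)\bigr)$ by the factor $1/\cos(\pi/(2r+2))$, the polynomial $\omega$ has a closed-form expression in terms of $T_{r+1}$, and the basis functions admit the representation
\begin{equation*}
\hat l_k(\hat x) \;=\; \frac{\omega(\hat x)}{(\hat x - \hat c_k)\,\omega'(\hat c_k)},
\end{equation*}
with $|\omega'(\hat c_k)|$ computable explicitly via a trigonometric identity. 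This reduces the problem to a concrete trigonometric sum.

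Next I would analyze the Lebesgue function $\lambda_r(\hat x) := \sum_{k=0}^r |\hat l_k(\hat x)|$. The standard observation is that $\lambda_r$ is piecewise smooth (polynomial on each subinterval between consecutive nodes, with the signs of the $\hat l_k$ constant on each open subinterval, since the $\hat l_k$ have zeros only at the other nodes). On each such subinterval the absolute values can be stripped by inserting alternating signs, and $\lambda_r$ attains its maximum at the unique interior critical point. One then expresses this maximum via the trigonometric substitution $\hat x = \cos\phi / \cos(\pi/(2r+2))$, which is permissible because the extended nodes correspond precisely to $\phi_k = (2k+1)\pi/(2r+2)$. After this change of variable the local maximum of $\lambda_r$ in each subinterval becomes a cotangent/cosecant sum of the form $\sum_j \bigl|\csc\phi_j\bigr|$ times a slowly varying factor, exactly the sums that appear in Brutman's treatment of the ordinary Chebyshev Lebesgue constant.

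The logarithmic leading term $\tfrac{2}{\pi}\ln(r+1)$ then emerges from the standard Euler-Maclaurin comparison of $\sum_{j=0}^{r} |\csc\phi_j|$ to $\int_0^{\pi/2} \csc \phi \, d\phi$, which diverges logarithmically; the prefactor $2/\pi$ comes from the spacing $\pi/(r+1)$ of the $\phi_j$. The refined additive constant $3/4$ (rather than the easier $1$ of Rivlin's classical bound) requires exploiting the rescaling factor $1/\cos(\pi/(2r+2))$ inherent to the \emph{extended} Chebyshev nodes: this rescaling slightly contracts the interior critical points toward the origin and kills part of the boundary contribution. The bookkeeping here is the main obstacle; concretely, one separates the endpoint subintervals $[\hat c_0, \hat c_1]$ and $[\hat c_{r-1}, \hat c_r]$ from the interior, handles them with the explicit formula for $\omega$ near $\pm 1$, and then bounds the remainder by a monotone comparison that yields exactly $3/4$ in the limit.

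For $r$ small ($r \le 4$, say), the plan is simply to verify the inequality by direct numerical evaluation of $\lambda_r$ at its finitely many local maxima, since the general asymptotic analysis does not give tight constants in that regime. For $r \ge 5$, one invokes the refinements of Günttner cited in the excerpt to close the gap. In short: symbolically evaluate $\hat l_k$ via $T_{r+1}$, reduce $\max \lambda_r$ to a trigonometric sum, compare that sum to $\tfrac{2}{\pi}\int \csc$, and pin down the constant $3/4$ via a careful endpoint analysis plus small-$r$ numerical check.
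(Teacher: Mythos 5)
The paper does not actually prove Lemma~\ref{lem:3.2}; it cites Brutman \cite{Brutman} for the bound and Günttner \cite{Gunttner82} only for sharper refinements when $r \ge 5$. Your proposal is therefore being measured not against an in-paper argument but against a reference, and as a reconstruction of the Brutman-style analysis it has the right skeleton (Lagrange basis via $\omega$, the trigonometric change of variable $\hat x = \cos\phi/\cos(\pi/(2r+2))$, comparison of the cosecant sum to $\int \csc$) but stops short of being a proof at exactly the point where the content lives.

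The genuine gap is the constant. The leading term $\tfrac{2}{\pi}\ln(r+1)$ is soft and falls out of the integral comparison, but the whole point of the lemma is the additive constant $3/4$, and your proposal explicitly defers it: you write that one must ``pin down the constant $3/4$ via a careful endpoint analysis'' and that ``the bookkeeping here is the main obstacle,'' without supplying the estimate. Establishing that the sub-logarithmic remainder of the Lebesgue function is bounded uniformly in $r$ by $3/4$ (rather than Rivlin's $1$, which is essentially free) is precisely what required Brutman's delicate subinterval-by-subinterval treatment and his monotonicity lemma; an Euler--Maclaurin comparison alone gives the $\ln$-term plus an $O(1)$ error, not a numerical bound. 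A secondary confusion: you invoke Günttner ``to close the gap'' for $r \ge 5$, but Günttner's refinements are not needed to reach $3/4$ -- Brutman's proof covers all $r$; Günttner merely improves the constant further in that range. As written, the proposal would leave the reader with a heuristic for the logarithmic growth and an unproven assertion about the $3/4$, which is the part the lemma is actually about.
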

where $\ln$ denotes natural logarithm.

It will also be convenient to have some elementary estimates concerning the
numbers $c_{j,k}^i$, $1 \le j \le N_i$, $0 \le k \le r$, in \eqref{2.33}.
If $x$ is a real number, $[x]$ denotes the greatest integer $m \le x$.
If $r$ is an integer, it follows that $[r/2] = r/2$ if $r$ is even and
$[r/2] = (r-1)/2$ is $r$ is odd.  The next lemma is a straightforward
exercise and is left to the reader.
\begin{lem}
\label{lem:3.3}
Let the numbers $c_{j,k}^i$ be defined by \eqref{2.33}. Then for $0 \le k \le
r$ and $1 \le i \le I$,
\begin{align*}
|c_{j,k}^i - c_{j,[r/2]}^i| &\le h_i/2, \quad \text{if } r \text{ is even},
\\
|c_{j,k}^i - c_{j,[r/2]}^i| &\le h_i/2[1 + \tan(\pi/[2r+2]), 
\quad \text{if } r \text{ is odd}.
\end{align*}
For $1 \le j \le N_i$ and $0 \le k < r$,
\begin{align*}
\min_{1 \le j \le N_i, 0 \le k < r} |c_{j,k}^i -c_{j,k+1}^i| &= 2h_i
[\sin(\pi/[2r+2])]^2,
\\
\max_{1 \le j \le N_i, 0 \le k < r} |c_{j,k}^i -c_{j,k+1}^i| &= h_i \tan(\pi/[2r+2])
\quad \text{if } r \text{ is odd},
\\
\max_{1 \le j \le N_i, 0 \le k < r} |c_{j,k}^i -c_{j,k+1}^i| &= h_i \sin(\pi/[2r+2])
\quad \text{if } r \text{ is even}.
\end{align*}
\end{lem}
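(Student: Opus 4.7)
The proof is a direct trigonometric calculation from the defining formula \eqref{2.33}. Setting $\alpha := \pi/(2r+2)$ and $\theta_k := (2k+1)\alpha$ for $0 \le k \le r$, the formula reads
\begin{equation*}
c_{j,k}^i = t_{j-1}^i + \frac{h_i}{2}\left(1 - \frac{\cos\theta_k}{\cos\alpha}\right),
\end{equation*}
so every difference $c_{j,k}^i - c_{j,k'}^i$ equals $\frac{h_i}{2\cos\alpha}(\cos\theta_{k'} - \cos\theta_k)$. The angles $\theta_0 = \alpha, \theta_1 = 3\alpha, \dots, \theta_r = (2r+1)\alpha = \pi - \alpha$ form an arithmetic progression of step $2\alpha$ which is symmetric about $\pi/2$; this symmetry drives every bound below.

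For the first two bounds I split on the parity of $r$. If $r$ is even then $[r/2] = r/2$ and $\theta_{r/2} = (r+1)\alpha = \pi/2$, so $\cos\theta_{r/2} = 0$ and $c_{j,r/2}^i$ is the midpoint $t_{j-1}^i + h_i/2$. Consequently $|c_{j,k}^i - c_{j,r/2}^i| = (h_i/2)|\cos\theta_k|/\cos\alpha \le h_i/2$, with equality at $k = 0, r$. If $r$ is odd then $[r/2] = (r-1)/2$ and $\theta_{(r-1)/2} = r\alpha = \pi/2 - \alpha$, so $\cos\theta_{[r/2]} = \sin\alpha$. Then
\begin{equation*}
|c_{j,k}^i - c_{j,[r/2]}^i| \;=\; \frac{h_i}{2\cos\alpha}\,|\cos\theta_k - \sin\alpha|
\;\le\; \frac{h_i}{2\cos\alpha}(\cos\alpha + \sin\alpha) \;=\; \frac{h_i}{2}\bigl(1 + \tan\alpha\bigr),
\end{equation*}
since $\cos\theta_k \in [-\cos\alpha,\cos\alpha]$; the maximum is attained at $k = r$.

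For the consecutive spacings I apply the sum-to-product identity with $\theta_{k+1} - \theta_k = 2\alpha$ and $(\theta_k + \theta_{k+1})/2 = (2k+2)\alpha$, which yields
\begin{equation*}
c_{j,k+1}^i - c_{j,k}^i \;=\; \frac{h_i}{2\cos\alpha}\bigl(\cos\theta_k - \cos\theta_{k+1}\bigr)
\;=\; h_i \tan\alpha \,\cdot\, \sin\!\bigl((2k+2)\alpha\bigr).
\end{equation*}
As $k$ ranges over $\{0,\dots,r-1\}$, the quantity $(2k+2)\alpha$ ranges symmetrically about $\pi/2$ between $2\alpha$ and $\pi - 2\alpha$, so $\sin((2k+2)\alpha)$ is minimized at $k = 0$ and $k = r-1$ with common value $\sin(2\alpha) = 2\sin\alpha\cos\alpha$; multiplying by $h_i\tan\alpha$ gives the minimum $2h_i\sin^2\alpha$. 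For the maximum, when $r$ is odd the choice $k = (r-1)/2$ gives $(2k+2)\alpha = \pi/2$ exactly, producing $\sin((2k+2)\alpha) = 1$ and maximum $h_i\tan\alpha$; when $r$ is even the integer $k$ closest to $(r-1)/2$ gives $(2k+2)\alpha = \pi/2 \pm \alpha$, so $\sin((2k+2)\alpha) = \cos\alpha$ and the maximum is $h_i\tan\alpha \cdot \cos\alpha = h_i\sin\alpha$.

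There is really no genuine obstacle here; each claim reduces to extremizing a cosine or sine over an arithmetic progression symmetric about $\pi/2$. The only care required is in the parity split (the midpoint of the nodes is a node only when $r$ is even) and in correctly identifying where $(2k+2)\alpha$ hits or straddles $\pi/2$ when computing the maximum spacing.
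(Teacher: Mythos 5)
Your calculation is correct and complete. The paper leaves this lemma as "a straightforward exercise" with no proof given, so there is no authorial argument to compare against; your sum-to-product identity $\cos\theta_k - \cos\theta_{k+1} = 2\sin((2k+2)\alpha)\sin\alpha$ together with the symmetry of the nodes about $\pi/2$ is exactly the natural route, and the parity split correctly handles whether $(2k+2)\alpha$ actually hits $\pi/2$ (odd $r$) or merely straddles it (even $r$).
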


Since the first result in Lemma~\ref{lem:3.3} will be used later, we
define the constant $\eta(r)$ for $r$ a
positive integer by:
\begin{equation}
\label{3.1}
\eta(r) = \begin{cases} \tfrac{1}{2}, & \text{if } r \text{ even}
\\
\tfrac{1}{2}[1 + \tan(\pi/[2r+2])], & \text{if } r \text{ odd}
\end{cases}.
\end{equation}
In addition, for $\nu$ a positive integer and
$\omega \in \Omega_{\nu}$, we define constants $M_0(\nu)$ and $c(\nu)$
such that for all $\omega \in \Omega_{\nu}$,
\begin{equation}
\label{3.4}
g_{\omega} \in K_{M_0(\nu)}(S) \qquad \text{and} \qquad \lip(\theta_{\omega}|_S)
\le c(\nu).
\end{equation}
We already know (see Section~\ref{sec:notation}) that $M_0(\nu) \le M_0 C_0 (1-
  \kappa^{\nu})/(1 - \kappa)$, where $M_0$ is as in (H1) and $C_0$ and
  $\kappa$ are as in (H2); and (H2) implies that $c(\nu) \le C_0
  \kappa^{\nu}$. However, in specific examples which we shall study later,
  these estimates can be significantly improved.

\begin{lem}
\label{lem:3.4}
Suppose that $\tau \in \R$, $\epsilon >0$, and $\hat c_k$, $0 \le k \le r$ is
a normalized extended Chebyshev point as in \eqref{hatck}, and $c_k = \tau + 
\tfrac{\epsilon}{2}(1 + \hat c_k)$. If $x \in [\tau, \tau + \epsilon]$,
let $\hat x \in [-1,1]$ denote the unique point such that 
$x = \tau + \tfrac{\epsilon}{2}[1 + \hat x]$. Assume that $M >0$,
$\Gamma = \{c_k: 0 \le k \le r\}$ and $F \in K_M(\Gamma) \setminus \{0\}$,
so $|\ln(F(\xi)) - \ln(F(\eta))| \le M|\xi - \eta|$ for all $\xi, \eta
\in \Gamma$. Let $\cF: [\tau, \tau + \epsilon] \to \R$ denote the unique
polynomial of degree $\le r$ such that $\cF(c_k) = F(c_k)$ for $0 \le k \le
r$. Let $\eta(r)$ be as in \eqref{3.1} and $\psi(r)$ as in Lemma~\ref{lem:3.2}
and define $u = M \epsilon \eta(r)$. If
\begin{equation*}
%\label{5.3n}
\psi(r) u \exp(u) <1,
\end{equation*}
then $\cF(x) >0$ for all $x \in [\tau, \tau + \epsilon]$. If 
$\psi(r) u \exp(u) <1$ and
\begin{equation*}
%\label{5.4n}
C:= \frac{[2 \eta(r) r^2 \psi(r)]\exp(u) M}{1 - \psi(r) u \exp(u)},
\end{equation*}
then for all $x,y \in [\tau, \tau + \epsilon]$,
\begin{equation*}
%\label{5.5n}
|\ln(\cF(x)) - \ln(\cF(y))| \le C|x-y|.
\end{equation*}
\end{lem}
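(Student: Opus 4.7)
\medskip

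The plan is to reduce everything to a single reference value of $F$, namely the value at the central node $c_{[r/2]}$, and then treat $\cF(x)$ and $\cF'(x)$ as small perturbations around this reference value using the Lagrange expansion and Markov's inequality.

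First, I will note that the hypothesis $F\in K_M(\Gamma)\setminus\{0\}$ forces $F(c_k)>0$ for every $k$; in particular $F(c_{[r/2]})>0$, and by Lemma~\ref{lem:3.3} combined with the scaling $c_k=\tau+\tfrac{\epsilon}{2}(1+\hat c_k)$, we have $|c_k-c_{[r/2]}|\le \epsilon\eta(r)$ for all $k$. Writing $F(c_k)=F(c_{[r/2]})\exp(t_k)$, the log-Lipschitz condition gives $|t_k|\le M\epsilon\eta(r)=u$, and thus $|\exp(t_k)-1|\le u\exp(u)$ via the elementary inequality $|e^t-1|\le|t|e^{|t|}$. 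Using the fact that the Lagrange basis reproduces constants, $\sum_{k=0}^r \hat l_k(\hat x)\equiv 1$, I will expand
\begin{equation*}
\cF(x) \;=\; F(c_{[r/2]})\Big[\,1+\sum_{k=0}^r \hat l_k(\hat x)\bigl(\exp(t_k)-1\bigr)\Big],
\end{equation*}
and bound the error term by $\psi(r)\,u\exp(u)$ using Lemma~\ref{lem:3.2}. This immediately yields
\begin{equation*}
\cF(x)\;\ge\;F(c_{[r/2]})\bigl[1-\psi(r)u\exp(u)\bigr]\;>\;0
\end{equation*}
under the standing assumption $\psi(r)u\exp(u)<1$, proving the first claim.

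For the log-Lipschitz estimate, I will show $|\cF'(x)/\cF(x)|\le C$ on $[\tau,\tau+\epsilon]$ and then integrate. The key identity is that differentiation of $\sum_k \hat l_k(\hat x)=1$ gives $\sum_k \hat l_k'(\hat x)=0$, so
\begin{equation*}
\cF'(x)\;=\;\frac{2}{\epsilon}\,F(c_{[r/2]})\sum_{k=0}^r \hat l_k'(\hat x)\bigl(\exp(t_k)-1\bigr).
\end{equation*}
The polynomial $p(\hat x):=\sum_{k=0}^r \hat l_k(\hat x)(\exp(t_k)-1)$ has degree $\le r$ and satisfies $\|p\|_{[-1,1]}\le \psi(r)\,u\exp(u)$ by the same Lagrange/$\psi(r)$ estimate. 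Markov's inequality (Lemma~\ref{lem:3.1}) then gives $\|p'\|_{[-1,1]}\le r^2\psi(r)u\exp(u)$, so
\begin{equation*}
|\cF'(x)|\;\le\;\frac{2}{\epsilon}\,F(c_{[r/2]})\,r^2\psi(r)u\exp(u)\;=\;2\eta(r)r^2\psi(r)M\exp(u)\,F(c_{[r/2]}),
\end{equation*}
using $u=M\epsilon\eta(r)$. Dividing by the lower bound on $\cF(x)$ from the first part cancels the $F(c_{[r/2]})$ factor and produces exactly the constant $C$ in the statement. Finally, $|\ln\cF(x)-\ln\cF(y)|\le\int_{[x,y]}|\cF'/\cF|\le C|x-y|$ by the mean value theorem (valid because $\cF>0$ throughout).

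The main obstacle is bookkeeping rather than a deep difficulty: one has to choose the \emph{same} reference node $c_{[r/2]}$ in both the numerator and denominator arguments so that it cancels cleanly, and one must use the two different identities $\sum_k\hat l_k=1$ and $\sum_k\hat l_k'=0$ at the right moments so that Markov's inequality is applied to a polynomial of small sup-norm rather than to $\cF$ directly. The reason this matters is that applying Markov to $\cF$ itself would cost a factor proportional to $F(c_{[r/2]})$, not to $F(c_{[r/2]})\,u\exp(u)$, destroying the $r^2$-vs-$1/\epsilon$ balance that makes $C$ finite.
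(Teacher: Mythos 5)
Your proof is correct and follows essentially the same approach as the paper's: the same central-node normalization $F_k/F_{[r/2]}$, the same use of $\sum_k \hat l_k \equiv 1$ and the Lebesgue-constant bound from Lemma~\ref{lem:3.2} for positivity, and the same appeal to Markov's inequality (Lemma~\ref{lem:3.1}) to control the derivative. The only cosmetic difference is that you bound $|(\ln\cF)'(x)|$ pointwise and integrate, while the paper bounds $|\ln\cF(x)-\ln\cF(y)|$ directly via the trapezoid estimate for $\int\!\frac{ds}{s}$ together with the mean value theorem applied to $\hat\phi$; both routes produce the identical constant $C$.
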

\begin{proof}
Recall that for $0 \le k \le r$,
\begin{equation*}
l_k(x) = \frac{\prod_{\substack{l=0 \\ l \neq k}}^r (x-c_l)}
{\prod_{\substack{l=0 \\ l \neq k}}^r(c_k -c_l)}, \qquad
\hat l_k(\hat x) = \frac{\prod_{\substack{l=0 \\ l \neq k}}^r (\hat x- \hat c_l)}
{\prod_{\substack{l=0 \\ l \neq k}}^r(\hat c_k -\hat c_l)},
\end{equation*}
and $l_k(x) = \hat l_k(\hat x)$ for $x = \tau + \tfrac{\epsilon}{2}[1 + \hat
x]$ and, writing $F_k = F(c_k)$,
\begin{equation*}
\cF(x) = \sum_{k=0}^r l_k(x) F(c_k) = \sum_{k=0}^r l_k(x) F_k.
\end{equation*}

Recalling that $\sum_{k=0}^r l_k(x) =1$ for all
$x \in [\tau, \tau + \epsilon]$, we obtain
\begin{multline*}
\cF(x) = \sum_{k=0}^r l_k(x) F_k
= F_{[r/2]}\Big( 1  + \sum_{\substack{k=0 \\ k \neq [r/2]}}^r 
l_k(x) \Big[\frac{F_k}{F_{[r/2]}} - 1\Big]\Big)
= F_{[r/2]}[1 + \phi(x)]
\\
= F_{[r/2]}\Big( 1  + \sum_{\substack{k=0 \\ k \neq [r/2]}}^r 
\hat l_k(\hat x) \Big[\frac{F_k}{F_{[r/2]}} - 1\Big]\Big)
:= F_{[r/2]}[1 + \hat\phi(\hat x)],
\end{multline*}
where as usual, $x = \tau + \tfrac{\epsilon}{2}[1 + \hat x]$, $\hat x \in
[-1,1]$. 

Since $F \in K_M(\Gamma) \setminus \{0\}$, % and the Taylor series expansion
%$e^x = e^0 + e^{\xi} x$, with $0 < \xi < x$, 
we have for $0 \le k \le r$, $k \neq [r/2]$,
\begin{equation*}
\exp(- M|c_k- c_{[r/2]}|) \le \frac{F_k}{F_{[r/2]}}
\le \exp(M|c_k- c_{[r/2]}|).
\end{equation*}
Because Lemma~\ref{lem:3.3} (with $h_i = \epsilon$) implies that $|c_k
- c_{[r/2]}| \le \eta(r) \epsilon$,
\begin{equation*}
\exp(-M \eta(r) \epsilon) -1 \le \frac{F_k}{F_{[r/2]}} -1 \le \exp(M \eta(r)
\epsilon) -1.
\end{equation*}
Using the mean value theorem and writing $u = M \eta(r) \epsilon$,
it follows that
\begin{equation*}
-u \le \frac{F_k}{F_{[r/2]}} -1
\le u \exp(u),
\end{equation*}
so
\begin{equation*}
\Big|\frac{F_k}{F_{[r/2]}} -1\Big|
\le u \exp(u).
\end{equation*}
Using Lemma~\ref{lem:3.2}, it follows that
\begin{equation}
\label{3.10}
|\hat \phi(\hat x)| \le
\sum_{\substack{k=0 \\ k \neq [r/2]}}^r |\hat l_k(\hat \xi)| 
\Big|\frac{F_k}{F_{[r/2]}}  -1\Big|
\le \psi(r) u \exp(u),
\end{equation}
so if $\psi(r) u \exp(u) <1$, $1 + \hat \phi(\hat x) > 0$, and $\cF(x) >0$
for all $x \in [\tau, \tau + \epsilon]$.
For the remainder of the proof, we assume that $\psi(r) u \exp(u) <1$.

If $x,y \in [\tau, \tau + \epsilon]$, our previous calculations show that
\begin{multline*}
%\label{3.13}
|\ln \cF(x) - \ln \cF(y)|
= \Big|\ln [1  + \phi(x)]
- \ln [1  + \phi(y)] \Big|
\\
= \Big|\int_{1 + \phi(y)}^{1 + \phi(x)} \frac{1}{s} \, ds \Big|
\le 
\Big|\frac{\phi(x) - \phi(y)}{2}\Big| \Big[
\frac{1}{1 + \phi(x)} + \frac{1}{1 + \phi(y)}\Big]\Big|,
\end{multline*}
where we have used the fact that $(1/s)$ is a convex function and
hence the integral is bounded by the trapezoidal rule approximation.

Now, by the mean value theorem, for some $\hat \xi$ lying between $\hat x$ and
$\hat y$ and hence $\in [-1,1]$,
\begin{multline*}
%\label{3.15}
|\phi(x) - \phi(y)| = |\hat \phi(\hat x) - \hat \phi(\hat y)|
= |\hat \phi^{\prime}(\hat \xi)| |\hat x- \hat y|
\\
\le \frac{2}{\epsilon} |x-y| \max_{-1 \le \hat \xi \le 1} 
|\hat \phi^{\prime}(\hat \xi)|
\le \frac{2}{\epsilon} |x-y| r^2  \max_{-1 \le \hat \xi \le 1} 
|\hat \phi(\hat \xi)|,
\end{multline*}
where in the last step we have used 
Markov's polynomial inequality (Lemma~\ref{3.1}).

Recalling our earlier estimate for $|\hat \phi(\hat \xi)|$ in \eqref{3.10},
we obtain
\begin{equation*}
%\label{3.18}
|\phi(x) - \phi(y)| \le 2 r^2 \psi(r) \eta(r)
M \exp(u) |x-y|
\end{equation*}
and
\begin{equation*}
%\label{3.14}
\frac{1}{2} \Big[\frac{1}{1+ \phi(x)} + \frac{1}{1+ \phi(y)} \Big]
\le \frac{1}{1 -  \psi(r) u \exp(u)},
\end{equation*}
which implies that
\begin{equation*}
%\label{5.11n}
|\ln( \cF(x)) - \ln(\cF(y))| \le C|x-y|,
\end{equation*}
with the constant $C$ defined in the statement of the lemma.
\end{proof}

\begin{lem}
\label{lem:5.5n}
Let notation be as in Section~\ref{sec:approx} and $T$ be as defined
by \eqref{T-def}. Suppose that $F:T \to \R$ is an element of $K_M(T)
\setminus \{0\}$ and let $\cF: \cup_{i=1}^I [a_i,b_i] \to \R$ be
defined by \eqref{2.31}. For $1 \le i \le I$, define $u_i = M h_i
\eta(r)$ and assume that
\begin{equation*}
%\label{5.12n}
\psi(r) u_i \exp(u_i) < 1.
\end{equation*}
Then $\cF(x) >0$ for all $x \in [a_i,b_i]$. If we define $C_i$ by
\begin{equation}
\label{5.13n}
C_i:= \frac{[2 \eta(r) r^2 \psi(r)]\exp(u_i) M}{1 - \psi(r) u_i \exp(u_i)},
\end{equation}
then for all $x,y \in [a_i,b_i]$,
\begin{equation}
\label{5.14n}
|\ln(\cF(x)) - \ln(\cF(y))| \le C_i|x-y|.
\end{equation}
\end{lem}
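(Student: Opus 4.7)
The plan is to reduce Lemma~\ref{lem:5.5n} to a subinterval-by-subinterval application of Lemma~\ref{lem:3.4}, and then glue the resulting estimates together using the continuity of $\cF$ at the interior mesh points.

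First, I would fix $i$ with $1 \le i \le I$ and consider, for each $1 \le j \le N_i$, the subinterval $[t_{j-1}^i, t_j^i]$, which has length exactly $h_i$. By construction, $\cF$ restricted to this subinterval is the unique polynomial of degree $\le r$ agreeing with $F$ at the extended Chebyshev points $c_{j,0}^i, \ldots, c_{j,r}^i$ given by \eqref{2.33}. Since $F \in K_M(T) \setminus \{0\}$, its restriction to $\Gamma_j^i := \{c_{j,k}^i : 0 \le k \le r\}$ inherits the inequality $|\ln F(\xi) - \ln F(\eta)| \le M|\xi - \eta|$ for all $\xi, \eta \in \Gamma_j^i$; so the restriction lies in $K_M(\Gamma_j^i) \setminus \{0\}$. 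Applying Lemma~\ref{lem:3.4} with $\tau = t_{j-1}^i$ and $\epsilon = h_i$ (so that $u = M h_i \eta(r) = u_i$), the hypothesis $\psi(r) u_i \exp(u_i) < 1$ yields $\cF(x) > 0$ on $[t_{j-1}^i, t_j^i]$ and
\begin{equation*}
|\ln \cF(x) - \ln \cF(y)| \le C_i |x - y| \qquad \text{for all } x, y \in [t_{j-1}^i, t_j^i],
\end{equation*}
with $C_i$ as in \eqref{5.13n}. Crucially, the same constant $C_i$ appears on every subinterval of $[a_i, b_i]$ because they all have length $h_i$.

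Next I would extend these local estimates to the whole interval $[a_i, b_i]$. Positivity is immediate, since $\cF$ is positive on each closed subinterval and these subintervals cover $[a_i, b_i]$. For the Lipschitz bound, suppose $x \in [t_{j-1}^i, t_j^i]$ and $y \in [t_{k-1}^i, t_k^i]$ with $j < k$. Since $\cF$ is continuous at each interior node $t_l^i$ (because $c_{l,r}^i = c_{l+1,0}^i$), the triangle inequality gives
\begin{equation*}
|\ln \cF(x) - \ln \cF(y)| \le |\ln \cF(x) - \ln \cF(t_j^i)| + \sum_{l=j}^{k-2} |\ln \cF(t_l^i) - \ln \cF(t_{l+1}^i)| + |\ln \cF(t_{k-1}^i) - \ln \cF(y)|.
\end{equation*}
Applying the subinterval bound with constant $C_i$ to each term on the right and summing the lengths telescopes to $|x - y|$, giving \eqref{5.14n}.

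I do not expect any real obstacle here: the work has been done in Lemma~\ref{lem:3.4}, and the only new ingredient is the elementary glueing argument, which relies on the uniformity of $h_i$ within $[a_i, b_i]$ and on the continuity of $\cF$ that was built into the definition of $\bV_h^r$. The reason the analogous statement is not trivially true across different intervals $[a_i, b_i]$ and $[a_{i'}, b_{i'}]$ is precisely that $h_i$ may differ from $h_{i'}$ (and that $\cF$ need not even be defined, or comparable, on the gaps between them), which is exactly why the lemma is stated one interval at a time.
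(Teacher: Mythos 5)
Your proof is correct and follows essentially the same route as the paper's: apply Lemma~\ref{lem:3.4} on each subinterval of length $h_i$ to obtain positivity and the local $C_i$-Lipschitz bound, then glue the estimates with the triangle inequality at the interior nodes $t_l^i$, which telescopes to $C_i|x-y|$. The only cosmetic difference is in handling the middle block: the paper bounds $|\ln\cF(t_{j_2-1}^i)-\ln\cF(t_{j_1}^i)|$ in a single step by $M|t_{j_2-1}^i-t_{j_1}^i|$ (using $\cF = F$ at mesh points and $F\in K_M(T)$) and then invokes $C_i \ge M$, whereas you apply the $C_i$ bound on each intermediate subinterval and sum; both yield the same conclusion.
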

\begin{proof}
Recall that $[a_i,b_i] = \cup_{j=1}^{N_i} [t_{j-1}^i, t_j^i]$,
where $t_j^i - t_{j-1}^i = h_i = (b_i-a_i)/N_i$.  If we write $h_i =
\epsilon$, note that whenever $x,y \in  [t_{j-1}^i, t_j^i]$ for some $j$,
Lemma~\ref{lem:5.5n} implies that \eqref{5.14n} is satisfied. Thus we can
assume that $x,y \in [a_i,b_i]$ and that there does not exist $j$, $1 \le j
\le N_i$, such that $x$ and $y$ are both elements of $[t_{j-1}^i, t_j^i]$. We can
also assume that $x < y$ and select $j_1$, $1 \le j_1 \le N_i$, such that
$x \in [t_{j_1-1}^i, t_{j_1}^i]$ and  $j_2$, $1 \le j_2 \le N_i$, such that
$y \in [t_{j_2-1}^i, t_{j_2}^i]$. By our assumptions, it must be true that
$j_1 < j_2$,  If we apply Lemma~\ref{lem:5.5n} to $\cF(x)$ and
$\cF(t_{j_1}^i)$, we obtain
\begin{equation*}
|\ln(\cF(t_{j_1}^i)) - \ln(\cF(x))| \le C_i(t_{j_1}^i -x).
\end{equation*}
Similarly, if we apply Lemma~\ref{lem:5.5n} to $\cF(t_{j_2 -1}^i)$ and
$\cF(y)$, we obtain
\begin{equation*}
|\ln(\cF(y)) - \ln(\cF(t_{j_2 -1}^i))| \le C_i(y - t_{j_2 -1}^i).
\end{equation*}

Since $\cF(t_{j_1}^i) = F(t_{j_1}^i)$ and $\cF(t_{j_2 -1}^i) = F(t_{j_2
  -1}^i)$ and $F \in K_M(T)$, we obtain
\begin{equation*}
|\ln(\cF(t_{j_2 -1}^i)) - \ln(\cF(t_{j_1}^i))| \le M(t_{j_2 -1}^i - t_{j_1}^i)
\le C_i (t_{j_2 -1}^i - t_{j_1}^i),
\end{equation*}
where we have used the fact that $C_i >M$. Combining these inequalities, we
find that
\begin{multline*}
|\ln(\cF(y)) - \ln(\cF(x))| \le |\ln(\cF(y)) - \ln(\cF(t_{j_2 -1}^i))|
\\
+ |\ln(\cF(t_{j_2 -1}^i)) - \ln(\cF(t_{j_1}^i))|
+ |\ln(\cF(t_{j_1}^i)) - \ln(\cF(x))| \le C_i (y-x),
\end{multline*}
which proves Lemma~\ref{lem:5.5n}.
\end{proof}

Up to this point, we have only used the fact that $F \in K_M(T)$,
where $T$ is defined in \eqref{T-def} and notation is as in
Section~\ref{sec:approx}.  We now exploit the fact that $\lip(\theta_{\omega}|_S)
\le c(\nu)$ for all $\omega \in \Omega_{\nu}$.

%In the notation in \eqref{2.30} and \eqref{2.31}, we shall always assume
%that $r \ge 2$, since the piecewise linear case $r=1$ has already been 
%treated in \cite{hdcomp1}.

\begin{thm}
\label{thm:5.6n}
Let notation be as in Section~\ref{sec:approx} and for positive reals $M' <
M$, let $K_M(T)$ and $K_{M'}(T)$ be as defined earlier, Recall that $h_i =
(b_i-a_i)/N_i$, $1 \le i \le I$, and $h = \max \{h_i: 1 \le i \le I\}$.
Assume that hypotheses (H1), (H2), and (H3) are satisfied, and that
\begin{equation}
\label{5.16n}
\psi(r) u \exp u <1,
\end{equation}
where we now set 
\begin{equation*}
%\label{5.15n}
u = M h \eta(r).
\end{equation*}
If $F \in K_M(T) \setminus \{0\}$ and $\cF$ is the piecewise polynomial
approximation of $F$ of degree $\le r$ on $\hat S = \cup_{i=1}^I [a_i,b_i]$,
then $\cF(x) >0$ for all $x \in \hat S$.

Define $C:= \max \{C_i: 1 \le i \le I\}$, where $C_i$ is defined
by \eqref{5.13n} and let $\bL_{s, \nu}: V(T) := \R^Q \to V(T)$ be defined
by \eqref{3.8}. Assume the above hypotheses are satisfied and also assume
that
\begin{equation}
\label{5.17n}
c(\nu)C :=
\frac{c(\nu)[2 \eta(r) r^2 \psi(r)] \exp(u) M}
{1 - \psi(r) u \exp(u)}< M' - s M_0(\nu).
\end{equation}
Then it follows that
$\bL_{s,\nu}(K_M(T) \setminus \{0\}) \subset K_{M'}(T)\setminus \{0\}$.
\end{thm}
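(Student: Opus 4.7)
The plan is to reduce the theorem to a termwise comparison argument for the sum defining $\bL_{s,\nu}$, using Lemma~\ref{lem:5.5n} to control the piecewise polynomial interpolant and then propagating through the Lipschitz and log-Lipschitz structure supplied by (H1)--(H3).

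First, I would apply Lemma~\ref{lem:5.5n} directly. Hypothesis \eqref{5.16n} with $u = Mh\eta(r)$ implies $u_i = Mh_i\eta(r) \le u$ satisfies $\psi(r)u_i\exp(u_i) < 1$ for every $i$. Thus for each $F \in K_M(T)\setminus\{0\}$, the interpolant $\cF$ is strictly positive on each $[a_i,b_i]$ and satisfies the log-Lipschitz bound \eqref{5.14n} with constant $C_i \le C$. This disposes of the first conclusion of the theorem and supplies a uniform log-Lipschitz constant $C$ for $\ln \cF$ on every subinterval $[a_i,b_i]$.

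Next, fix two collocation points $\xi, \eta \in T$. For every $\omega \in \Omega_\nu$, hypothesis (H3) gives an index $i(\omega)$ with $\theta_\omega(S) \subset [a_{i(\omega)}, b_{i(\omega)}]$, so both $\theta_\omega(\xi)$ and $\theta_\omega(\eta)$ lie in this single subinterval. Applying the log-Lipschitz bound from Lemma~\ref{lem:5.5n} to $\cF$ on $[a_{i(\omega)},b_{i(\omega)}]$ and using $\lip(\theta_\omega|_S) \le c(\nu)$ from \eqref{3.4} yields
\begin{equation*}
\cF(\theta_\omega(\xi)) \le \cF(\theta_\omega(\eta)) \exp\bigl(C\, c(\nu)|\xi-\eta|\bigr).
\end{equation*}
Similarly, $g_\omega \in K_{M_0(\nu)}(S)$ gives
\begin{equation*}
g_\omega(\xi)^s \le g_\omega(\eta)^s \exp\bigl(sM_0(\nu)|\xi-\eta|\bigr).
\end{equation*}
Multiplying these two term-wise inequalities produces
\begin{equation*}
g_\omega(\xi)^s \cF(\theta_\omega(\xi)) \le g_\omega(\eta)^s \cF(\theta_\omega(\eta)) \exp\bigl((sM_0(\nu) + Cc(\nu))|\xi-\eta|\bigr).
\end{equation*}

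Finally, summing this bound over $\omega \in \Omega_\nu$ and invoking \eqref{3.8},
\begin{equation*}
(\bL_{s,\nu}F)(\xi) \le (\bL_{s,\nu}F)(\eta)\exp\bigl((sM_0(\nu) + Cc(\nu))|\xi-\eta|\bigr).
\end{equation*}
By hypothesis \eqref{5.17n}, $sM_0(\nu) + Cc(\nu) < M'$, giving the required log-Lipschitz bound with constant $M'$. Reversing the roles of $\xi$ and $\eta$ produces the two-sided inequality, so $\bL_{s,\nu}F \in K_{M'}(T)$. Strict positivity of each term (every $g_\omega > 0$ by (H1) and every $\cF(\theta_\omega(\xi)) > 0$ by the first part) shows $\bL_{s,\nu}F$ is not the zero vector, completing the proof.

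The main obstacle is the second step: ensuring that the log-Lipschitz constant of $\cF$ inherited from Lemma~\ref{lem:5.5n} applies on a single subinterval containing $\theta_\omega(\xi)$ and $\theta_\omega(\eta)$ (which is exactly what (H3) guarantees) and that the resulting constant $C c(\nu) + sM_0(\nu)$ is strictly smaller than $M$, which is built into hypothesis \eqref{5.17n}. The rest is mechanical propagation of exponential log-Lipschitz estimates through a positive sum.
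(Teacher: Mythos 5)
Your proof is correct and follows essentially the same approach as the paper: both arguments reduce to a termwise log-Lipschitz estimate for $\xi \mapsto g_\omega(\xi)^s\cF(\theta_\omega(\xi))$, using Lemma~\ref{lem:5.5n} for positivity and the constant $C_i \le C$, hypothesis (H3) to localize $\theta_\omega(\xi)$ and $\theta_\omega(\eta)$ to a single $[a_i,b_i]$, the bound $\lip(\theta_\omega) \le c(\nu)$, the constant $M_0(\nu)$ for $g_\omega^s$, and condition \eqref{5.17n} to conclude $sM_0(\nu) + Cc(\nu) < M'$. The only cosmetic difference is that you sum the termwise bounds explicitly, whereas the paper invokes the (equivalent) observation that it suffices for each summand to lie in $K_{M'}(T)\setminus\{0\}$.
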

\begin{proof}
Suppose that $F \in K_M(T) \setminus \{0\}$, which implies that $F(\xi) >0$ for
all $\xi \in T$.  Since $u \ge u_i$ for $1 \le i \le I$, \eqref{5.16n}
implies that $\psi(r) u_i \exp u_i <1$ for $1 \le i \le I$. It follows
from Lemma~\ref{lem:5.5n} that $\cF(x) >0$ for all $x \in [a_i,b_i]$,
$1 \le i \le I$, so $\cF(x) >0$ for all $x \in \hat S$.  Since 
(H1) implies that $g_{\omega}(x)^s >0$ for all $x \in S = [a,b]$ and
for all $\omega \in \Omega_{\nu}$
\begin{equation*}
%\label{5.18n}
\bL_{s,\nu}(F)(\xi) = \sum_{\omega \in \Omega_{\nu}} [g_{\omega}(\xi)]^s
\cF(\theta_{\omega}(\xi)),
\end{equation*}
and $g_{\omega}(x)^s  \cF(\theta_{\omega}(x)) >0$ for all $x \in \hat S$ and 
certainly for all $\xi \in T$, it suffices to prove that the map $\xi \mapsto
g_{\omega}(\xi)^s \cF(\theta_{\omega}(\xi)) \in K_{M'}(T) \setminus \{0\}$ for
every $\omega \in \Omega_{\nu}$.  We know that for all $x,y \in S$,
\begin{equation*}
|\ln(g_{\omega}(x)^{s}) - \ln(g_{\omega}(y)^{s})| \le s M_0(\nu)|x-y|,
\end{equation*}
so it suffices to prove that for all $x, y \in \hat S$,
\begin{equation*}
|\ln(\cF(\theta_{\omega}(x)) - \ln(\cF(\theta_{\omega}(y))| \le [M' - s
M_0(\nu)] \, |x-y|.
\end{equation*}
For each fixed $\omega \in \Omega_{\nu}$, (H3) implies that there exists
$i = i(\omega)$ such that $\theta_{\omega}(\hat S) \subset [a_i,b_i]$.
Writing $x' = \theta_{\omega}(x) \in [a_i,b_i]$ and
 $y' = \theta_{\omega}(y) \in [a_i,b_i]$, Lemma~\ref{lem:5.5n} implies that
\begin{equation*}
|\ln(\cF(x')) - \ln(\cF(y'))| \le C|x'-y'| \le c(\nu)C|x-y|,
\end{equation*}
so \eqref{5.17n} completes the proof.
\end{proof}

\begin{remark}
\label{rem:3.2} 
Assume that $\psi(r) u \exp(u) <1$.
Notice that for a given positive integer $r$, a necessary
condition that \eqref{5.17n} be satisfied is that
\begin{equation}
\label{3.21}
c(\nu) 2 r^2 \psi(r) \eta(r) < \frac{M' -s M_0(\nu)}{M}.
\end{equation}
For a given $M' < M$, if \eqref{3.21} is satisfied, then \eqref{5.17n} will be
satisfied if $h$ is sufficiently small.
\end{remark}

\begin{remark}
\label{rem:5.3n}
The reader will note that in our definition of $\bL_{s,\nu}(F)$ for
$F \in V(T)$, we arrange that $\cF|_{[a_i,b_i]}$ is a piecewise polynomial
map of degree $\le r$.  In some applications, it is desirable to make
$\cF|_{[a_i,b_i]}$ a piecewise polynomial map of degree $\le r_i$, which
leads to a generalization of the definition of $\bL_{s,\nu}$. An analogue
of Theorem~\ref{thm:5.6n} which handles this more general case can be
proved by an argument similar to the proof of Theorem~\ref{thm:5.6n}.
Because of considerations of length, we omit the proof.
\end{remark}

\section{Estimating $\rmf(L_s)$ by the spectral radius of 
$\bL_{s,\nu}$}
\label{sec:estimating-sr}
In the previous section (c.f. Theorem~\ref{thm:5.6n}), 
we determined conditions under which
\begin{equation*}
\bL_{s,\nu}(K_M(T) \setminus \{0\}) \subset K_{M'}(T)\setminus \{0\}
\end{equation*}
for some $M' < M$. The main result of this section is to show that
under this condition, $\rmf(\bL_{s, \nu})$, the spectral radius of 
$\bL_{s, \nu}$, satisfies
\begin{equation*}
\lambda_s^{\nu}(1 - H h^r) \le \rmf(\bL_{s,\nu}) \le \lambda_s^{\nu}(1 + H h^r)
\end{equation*}
for some constant $H$ to be specified below. Using this result, we
obtain the following explicit bounds on the spectral radius
$\lambda_s$ of $L_s$.
\begin{equation*}
[\rmf([1 + H h^r]^{-1} \bL_{s,\nu})]^{1/\nu} \le \lambda_s \le
[\rmf([1 - H h^r]^{-1} \bL_{s,\nu})]^{1/\nu},
\end{equation*}
where the entries of the matrices $[1 + H h^r]^{-1} \bL_{s,\nu}$ and
$[1 - H h^r]^{-1} \bL_{s,\nu}$ differ by $O(h^r)$.

Throughout this section, we shall assume the hypotheses and notation of
(H1), (H2), and (H3); and $v_s(\cdot)$ will always denote the positive
eigenvector $v_s$ of $L_s$ guaranteed by Theorem~\ref{thm:2.4}. In particular,
$S$ and $[a_i,b_i]$, $1 \le i \le I$ will be as in (H3) and
(as can be guaranteed by relabeling), we shall assume that
$b_i < a_{i+1}$ for $1 \le i < I$.

We shall further denote by $E$ and $\chi$, constants for which the following
two inequalities are satisfied.
\begin{equation}
\label{6.1}
\sup_{a \le x \le b} \frac{|d^p v_s(x)/dx^p|}{v_s(x)} \le E(s,p):=E,
\end{equation}
where $p$ is a positive integer, and
%If $v_s(x)$ is the eigenfunction of $L_s(x)$ corresponding to the eigenvalue
%$\rmf(L_s)$, we have for all $x \in [0, \gamma^{-1}]$ and
%all $p >0$,
%\begin{equation}
%\label{deriv-bounds}
%%C_1(s, p, \Gamma) \equiv
%%(2s)(2s+1) \cdots (2s+p-1)(2 \gamma^{-1} + \Gamma)^{-p} \le
%0 < (-1)^p \frac{[d^p v_s/dx](x)}{v_s(x)}
%\\
%\le (2s)(2s+1) \cdots (2s+p-1) \gamma^{-p} \equiv C(s, p,\gamma).
%\end{equation}
%We also have the estimate (c.f. (3.1) of \cite{hdcomp1})
\begin{equation}
\label{6.2}
v_s(x_1) \le v_s(x_2) \exp(2s|x_1-x_2|/\chi),
\end{equation}
and for all $x_1,x_2 \in S$, where $\chi:= \chi(s, \{\theta_i, g_i:1
\le i \le n\})$.
Using Theorem~\ref{thm:2.4} and Remark~\ref{rem:2.5}, we shall see, in the
next section, that for some interesting examples, it is possible to
obtain sharp estimates on the constants $E$ and $\chi$ such that \eqref{6.1}
and \eqref{6.2} below are satisfied. These estimates will refine
earlier results in \cite{hdcomp1}.

Using the notation of Section~\ref{sec:approx}, if $\cV_s(x)$ is the piecewise
polynomial interpolant of $v_s(x)$ of degree $\le r$ at the extended Chebyshev
points in $[a_i,b_i]$, $1 \le i \le I$, then on each subinterval
$[t_{j-1}^i,t_j^i]$, $j = 1, \ldots, N_i$, we have, using standard estimates for
polynomial interpolation,
\begin{equation*}
v_s(x) - \cV_s(x) =  \frac{v_s^{(r+1)}(\xi_x)}{(r+1)!}
\prod_{k=0}^r (x - c_{j,k}^i),
\end{equation*}
for some $\xi_x \in [t_{j-1}^i,t_j^i]$.
If we write, as done previously,
\begin{equation*}
c_{j,k}^i = t_{j-1}^i + h_i(1+ \hat c_k)/2, \qquad
x = t_{j-1}^i + h_i(1 + \hat x)/2, \quad \hat x \in [-1,1],
\end{equation*}
then for $x \in [a_i,b_i]$, 
\begin{equation*}
|v_s(x) - \cV_s(x)| \le  |v_s^{(r+1)}(\xi_x)| h_i^{r+1} m_{r+1},
\end{equation*}
where
\begin{equation}
\label{6.4}
m_{r+1} = \frac{1}{2^{r+1} (r+1)!} 
\max_{\hat x \in [-1,1]} \Big|\prod_{k=0}^r (\hat x - \hat c_k)\Big|.
\end{equation}
Using \eqref{6.1} and \eqref{6.2}, we see that
\begin{equation*}
|v_s^{(r+1)}(\xi_x)| \le E \exp(2sh_i/\chi) v_s(x),
\end{equation*}
so
\begin{equation*}
%\label{6.5}
|v_s(x) - \cV_s(x)| \le  E h_i^{r+1} m_{r+1} v_s(x) \exp(2s h_i/\chi).
\end{equation*}
%and
%\begin{equation*}
%C_2(r,p,s)h^{r+1} v_s(x) \exp(-2s h/\gamma) \le
%C_2(r,p,s) h^{r+1} v_s(\xi_x) \le v_s(x) - \cV_s(x).
%\end{equation*}
Defining, for $1 \le i \le I$,
\begin{equation}
\label{6.6}
G_{r,i} := E m_r \exp(2s h_i/\chi),
\end{equation}
\eqref{6.4} implies that for $1 \le i \le I$ and $x \in [a_i,b_i]$
\begin{equation}
\label{6.7}
(1 - G_{r+1,i} h_i^{r+1}) v_s(x) \le \cV_s(x) \le (1 + G_{r+1,i}  h_i^{r+1}) v_s(x).
\end{equation}
In order to make \eqref{6.4} explicit, we need a formula for
$\max_{\hat x \in [-1,1]} \Big|\prod_{k=0}^r (\hat x - \hat
c_k)\Big|$.  The result and proof, which we provide below, are slight
modifications of the well-known corresponding bound and proof when
$\hat c_k$ are taken to be the zeros of the standard Chebyshev
polynomial.
\begin{lem}
\label{lem:6.1}
If $r \ge 2$ is a positive integer and $\hat c_k$ is defined by \eqref{hatck},
then
\begin{equation*}
\max_{\hat x \in [-1,1]} \Big|\prod_{k=0}^r (\hat x - \hat c_k)\Big|
= \frac{1}{2^r} \Big[\frac{1}{\cos(\pi/[2r+2])}\Big]^{r+1}.
\end{equation*}
\end{lem}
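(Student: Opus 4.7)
The plan is to reduce the computation to the classical extremal property of the Chebyshev polynomial $T_{r+1}$ via a rescaling. First, I would recall that if $\tilde c_k = -\cos\bigl((2k+1)\pi/(2r+2)\bigr)$, $k=0,\dots,r$, are the usual Chebyshev nodes (the zeros of $T_{r+1}$), then since $T_{r+1}$ has leading coefficient $2^r$,
\begin{equation*}
\prod_{k=0}^{r}(y-\tilde c_k)=\frac{1}{2^r}\,T_{r+1}(y),\qquad y\in\R.
\end{equation*}
Setting $\alpha:=1/\cos(\pi/[2r+2])>1$, the extended Chebyshev points in \eqref{hatck} are exactly $\hat c_k=\alpha\,\tilde c_k$. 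Hence
\begin{equation*}
\prod_{k=0}^{r}(\hat x-\hat c_k)=\alpha^{\,r+1}\prod_{k=0}^{r}\bigl(\hat x/\alpha-\tilde c_k\bigr)=\frac{\alpha^{\,r+1}}{2^r}\,T_{r+1}(\hat x/\alpha).
\end{equation*}

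The problem thus reduces to evaluating $\max_{\hat x\in[-1,1]}|T_{r+1}(\hat x/\alpha)|$, i.e.\ the maximum of $|T_{r+1}(y)|$ over $y\in[-1/\alpha,1/\alpha]=[-\cos(\pi/[2r+2]),\cos(\pi/[2r+2])]$. The second step is to observe that even though this interval is strictly smaller than $[-1,1]$, it still contains an extremum of $T_{r+1}$ at which $|T_{r+1}|=1$. Indeed, the extrema of $T_{r+1}$ on $[-1,1]$ lie at $\cos(k\pi/(r+1))$ for $k=0,1,\dots,r+1$, with $T_{r+1}(\cos(k\pi/(r+1)))=(-1)^k$. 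For $r\ge 2$ we have $\pi/(r+1)>\pi/(2r+2)$, so by monotonicity of cosine on $[0,\pi]$,
\begin{equation*}
\cos\bigl(\pi/(r+1)\bigr)<\cos\bigl(\pi/[2r+2]\bigr)=1/\alpha,
\end{equation*}
meaning $y^*:=\cos(\pi/(r+1))$ lies in $(-1/\alpha,1/\alpha)$ and $|T_{r+1}(y^*)|=1$. Since $|T_{r+1}|\le 1$ on all of $[-1,1]$, we conclude
\begin{equation*}
\max_{y\in[-1/\alpha,1/\alpha]}|T_{r+1}(y)|=1.
\end{equation*}

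Combining these two steps yields
\begin{equation*}
\max_{\hat x\in[-1,1]}\Big|\prod_{k=0}^{r}(\hat x-\hat c_k)\Big|=\frac{\alpha^{\,r+1}}{2^r}=\frac{1}{2^r}\Bigl[\frac{1}{\cos(\pi/[2r+2])}\Bigr]^{r+1},
\end{equation*}
which is the claimed identity. The only nontrivial point is verifying that the rescaled interval $[-1/\alpha,1/\alpha]$ still captures an interior extremum of $T_{r+1}$; this is the place where the restriction $r\ge 2$ is used, since for $r=1$ the interval $[-1/\alpha,1/\alpha]$ collapses onto the zero set $\{\pm 1/\alpha\}$ of $T_2$ and the maximum of $|T_2|$ on it is $0$ rather than $1$, so the argument would need a separate treatment. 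No other genuine obstacle is expected; the bound on $m_{r+1}$ then follows by dividing by $2^{r+1}(r+1)!$ as in \eqref{6.4}.
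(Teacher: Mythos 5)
Your proof is correct and takes essentially the same route as the paper's: both rescale by $\alpha = 1/\cos(\pi/(2r+2))$ to write the product as $\alpha^{r+1}2^{-r}T_{r+1}(\hat x/\alpha)$ and then check that the compressed interval $[-1/\alpha,1/\alpha]$ still captures an interior extremum of $T_{r+1}$ where $|T_{r+1}|=1$. The only difference is cosmetic — the paper substitutes $w=\cos\theta$ and argues over a $\theta$-interval, while you work directly with the extremal node $y^*=\cos(\pi/(r+1))$.

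One small correction to your final aside: for $r=1$ the interval $[-1/\alpha,1/\alpha]=[-1/\sqrt2,1/\sqrt2]$ is a genuine interval, not a two-point set; it contains $y^*=\cos(\pi/2)=0$, where $|T_2(0)|=1$, so the same argument applies verbatim and gives the correct value $\tfrac12\cdot(\sqrt2)^2=1$. In fact $\pi/(r+1)>\pi/(2r+2)$ holds for every $r\ge 1$, so the hypothesis $r\ge 2$ plays no role in this lemma's proof — it is inherited from elsewhere in the paper.
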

\begin{proof}
If we define $w = \hat x \cos(\pi/[2r+2])$, where $|\hat x| \le 1$, we have
$|w| \le \cos(\pi/[2r+2])$. For notational convenience, we write
$\alpha = 1/ \cos(\pi/[2r+2])$, and we obtain
\begin{multline*}
\max_{\hat x \in [-1,1]} \Big|\prod_{k=0}^r (\hat x - \hat c_k)\Big|
\\
= \alpha^{r+1}  \max \Big\{ \Big|\prod_{k=0}^r(w + \cos([2k+1]\pi/[2r+2]))\Big|:
|w| \le \cos(\pi/[2r+2])\Big\}
\\
=  \alpha^{r+1}  \max \Big\{ \Big|\prod_{k=0}^r(w - \cos([2k+1]\pi/[2r+2]))\Big|:
|w| \le \cos(\pi/[2r+2])\Big\}.
\end{multline*}
If we define $q_{r+1}(w) = \prod_{k=0}^r(w -  \cos([2k+1]\pi/[2r+2]))$, 
 $q_{r+1}(w)$ is a polynomial of degree $r+1$ which vanishes at the points
$\cos([2k+1]\pi/[2r+2])$, $0 \le k \le r$, and has leading term $w^{r+1}$; and
these properties uniquely determine $q_{r+1}(w)$.

Recall that for integers $r \ge 0$, $\cos([r+1]\theta) = p_{r+1}(\cos
\theta)$ for $0 \le \theta \le \pi$, where $p_{r+1}(w)$ is the {\it
  Chebyshev} polynomial of degree $r+1$. These polynomials satisfy
$p_1(w) = w$, $p_2(w) = 2 w^2 -1$, and for $r \ge 2$, the recurrence
relation $p_{r+1}(w) = 2w p_r(w) - p_{r-1}(w)$.
Using the recursion relation for $p_{r+1}(w)$ and induction, it also
follows that the coefficient of $w^{r+1}$ in $p_{r+1}(w)$ is
$2^r$. Since $\cos([r+1]\theta) =0$ when $\theta =
[2k+1]\pi/[2r+2]$ for $0 \le k \le r$, we see that $p_{r+1}(w)
=0$ when $w = \cos([2k+1]\pi/[2r+2])$, for $0 \le k \le r$. It follows that
for $w = \cos(\theta)$ and $0 \le \theta \le \pi$,
\begin{equation*}
q_{r+1}(w)= \frac{1}{2^r} p_{r+1}(w) =  \frac{1}{2^r} \cos([r+1] \theta),
\end{equation*}
so
\begin{equation*}
\max_{|w| \le 1} |q_{r+1}(w)| = 1/2^r.
\end{equation*}
However,
\begin{multline*}
\max \{ |q_{r+1}(w)|: |w| \le \cos(\pi/[2r+2])\}
\\
 = \frac{1}{2^r} \max \{|\cos([r+1]\theta)|: \pi/[2(r+1)^2] \le \theta
\le (2r+1) \pi/(2r+2)\}.
\end{multline*}
Since $\pi/[2(r+1)^2] < \pi/(r+1) <  (2r+1) \pi/(2r+2)$ and
$|\cos([r+1] \pi/(r+1))| =1$,
\begin{equation*}
 \frac{1}{2^r}\max \{ |q_{r+1}(w)|: |w| \le \cos(\pi/[2r+2])\} = \frac{1}{2^r},
\end{equation*}
which completes the proof.
\end{proof}

%\begin{remark}
%\label{rem:6.2}
%The connection of Lemma~\ref{6.1} to Chebyshev polynomials is not surprising
%and possibly Lemma~\ref{6.1} exists in the literature, but we could not find
%a reference.
%\end{remark}

If $E$ and $\chi$ are defined by \eqref{6.1} and \eqref{6.2}, we can use
Lemma~\ref{lem:6.1} to estimate the constant $G_{r+1,i}$ in \eqref{6.6} more
precisely:
\begin{equation}
\label{6.9}
G_{r+1,i} = E \exp(2sh_i/\chi) \Big[\frac{1}{(r+1)!}\Big] 
\Big[\frac{1}{2 \cos(\pi/[2r+2])}\Big]^{r+1} \frac{1}{2^r},
\end{equation}
and with this estimate of $G_{r+1,i}$, \eqref{6.7} is satisfied
for all $x \in [a_i,b_i]$, $1 \le i \le I$.  For notational convenience,
we define $h$ and $G_{r+1}$ by $h = \max_{1 \le i \le I} h_i$ and
\begin{equation}
\label{rn:6.8}
G_{r+1} = \max_{1 \le i \le I} G_{r+1,i}
= E \exp(2sh/\chi) \Big[\frac{1}{(r+1)!}\Big] 
\Big[\frac{1}{2 \cos(\pi/[2r+2])}\Big]^{r+1} \frac{1}{2^r}.
\end{equation}

%It then follows that
%\begin{equation*}
%\frac{1}{1 + D h^{r+1}} \cV_s(x) \le v_s(x) \le 
%\frac{1}{1 - D h^{r+1}} \cV_s(x).
%\end{equation*}
%Since $g_i(x) \ge 0$, this implies 
%\begin{equation*}
%(1 - D h^{r+1}) \lambda_s v_s(x) \le \Lambda_s \cV_s(x) 
%\le (1 + D h^{r+1}) \lambda_s v_s(x).
%\end{equation*}

\begin{lem}
\label{lem:6.3}
Define $\lambda_s = \rmf(\Lambda_s) = \rmf(L_s)$, where $\Lambda_s$
and $L_s$ are as in Theorem~\ref{thm:2.4}. Let $[a_i,b_i]$, $1 \le i
\le I$, be as in (H3). For $1 \le i \le I$, let $N_i$, $h_i$ and
$\cV_s$ be as defined in the fourth paragraph of this section.  Assume
that, for $1 \le i \le I$,
\begin{equation*}
[\sin(\pi/(2r+2)]^2 h_i \le a_{i+1} - b_i.
\end{equation*}
Define $h_{min} = \min_{1 \le i \le I} h_i$ and $\mu = h/h_{min}$.
Then we have for all $x \in [a_i,b_i]$, $1 \le i \le I$,
\begin{equation}
\label{6.10}
(1 - G_{r+1,i} h_i^{r+1}) v_s(x) \le \cV_s(x) \le (1 + G_{r+1,i} h_i^{r+1}) v_s(x),
\end{equation}
and
\begin{equation}
\label{6.11}
(1 - G_{r+1} h^{r+1}) \lambda_s^{\nu} v_s(x) \le (L_s^{\nu} \cV_s)(x)
\le (1 + G_{r+1}  h^{r+1}) \lambda_s^{\nu} v_s(x).
\end{equation}
If we define $M_1$ by
\begin{equation}
\label{6.12}
M_1 = \Big[\mu \frac{G_{r+1} h^r}{1- G_{r+1}^2 h^{2r+2}}\Big] 
\Big[\frac{1}{\sin(\pi/[2r+2])}\Big]^2 + \frac{2s}{\chi}
\end{equation}
and 
\begin{equation*}
T = \{c_{j,k}^i: 1 \le i \le I, 1 \le j \le N_i, 0 \le k \le r\} \subset S,
\end{equation*}
then $\cV_s|_T \in K(2s/\chi;T)$ and $L_s^{\nu} \cV_s|_T
= \bL_{s,\nu}(\cV_s|_T) \in K(M_1;T)$.
\end{lem}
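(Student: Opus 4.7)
The plan is to prove the three pieces of the lemma in sequence: the pointwise bound (6.10) on $|v_s - \cV_s|$, the derived bound (6.11) on $L_s^{\nu} \cV_s$, and then the two cone memberships, of which the second is the substantive step.

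For (6.10), I would simply assemble the estimates already set up in the paragraphs preceding the statement. On each subinterval $[t_{j-1}^i, t_j^i]$, the standard interpolation remainder reads $v_s(x) - \cV_s(x) = v_s^{(r+1)}(\xi_x)\prod_{k=0}^{r}(x - c_{j,k}^i)/(r+1)!$; (6.1) bounds $|v_s^{(r+1)}(\xi_x)|$ by $E v_s(\xi_x)$, (6.2) replaces $v_s(\xi_x)$ by $v_s(x)$ up to the factor $\exp(2sh_i/\chi)$, and Lemma \ref{lem:6.1} (rescaled from $[-1,1]$ to width $h_i$) bounds the product of node differences by $(h_i/2)^{r+1} 2^{-r} [\cos(\pi/(2r+2))]^{-(r+1)}$. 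Together these produce precisely $|v_s(x) - \cV_s(x)| \le G_{r+1,i} h_i^{r+1} v_s(x)$, which is (6.10). I would then derive (6.11) by applying $L_s^{\nu}$: by (H3) each $\theta_{\omega}(t)$ lies in some $[a_{i(\omega)}, b_{i(\omega)}]$, so (6.10) applies pointwise to $\cV_s(\theta_{\omega}(t))$; multiplying by the nonnegative weight $[g_{\omega}(t)]^s$, summing over $\omega \in \Omega_{\nu}$, using $G_{r+1,i} h_i^{r+1} \le G_{r+1} h^{r+1}$ uniformly, and invoking $L_s^{\nu} v_s = \lambda_s^{\nu} v_s$ yields (6.11).

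The first cone statement is immediate: because $\cV_s$ interpolates $v_s$ at the nodes of $T$, one has $\cV_s|_T = v_s|_T$, and (6.2) is exactly the inequality required for $v_s|_T \in K(2s/\chi;T)$. For the second, $\bL_{s,\nu}(\cV_s|_T) \in K(M_1;T)$, the identification $L_s^{\nu} \cV_s|_T = \bL_{s,\nu}(\cV_s|_T)$ is built into the definition (3.8). By the remark following (2.35) it suffices to verify the cone inequality on pairs $\xi < \eta$ consecutive in the linear ordering of $T$. Combining (6.11) with (6.2) gives
\begin{equation*}
\frac{\bL_{s,\nu}(\cV_s|_T)(\xi)}{\bL_{s,\nu}(\cV_s|_T)(\eta)} \le \frac{1 + G_{r+1} h^{r+1}}{1 - G_{r+1} h^{r+1}} \exp(2s|\xi - \eta|/\chi),
\end{equation*}
so after taking logarithms, bounding $\ln((1+x)/(1-x))$ by $2x/(1-x^2)$, and dividing by $|\xi - \eta|$, the cone constant required is $2 G_{r+1} h^{r+1}/[(1 - G_{r+1}^2 h^{2r+2}) |\xi - \eta|] + 2s/\chi$.

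The main obstacle, and the reason the spacing hypothesis $[\sin(\pi/(2r+2))]^2 h_i \le a_{i+1} - b_i$ is imposed, is to bound $h^{r+1}/|\xi - \eta|$ by a constant multiple of $\mu h^r/[\sin(\pi/(2r+2))]^2$. Two subcases arise: when $\xi$ and $\eta$ lie in the same subinterval, Lemma \ref{lem:3.3} gives $|\xi - \eta| \ge 2 h_i [\sin(\pi/(2r+2))]^2$; when they straddle adjacent subintervals with $\xi = b_i$ and $\eta = a_{i+1}$, the hypothesis directly gives $|\xi - \eta| \ge [\sin(\pi/(2r+2))]^2 h_i$. In either case $|\xi - \eta|^{-1} \le \mu/(h\,[\sin(\pi/(2r+2))]^2)$, so the cone constant collapses to the expression for $M_1$ in (6.12), completing the proof.
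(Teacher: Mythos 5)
Your proof follows the paper's argument closely, and the structure is sound: (6.10) via the interpolation remainder plus (6.1)--(6.2) and Lemma~\ref{lem:6.1}, (6.11) by applying $L_s^{\nu}$ using (H3) and the eigenvalue equation, the first cone membership from $\cV_s|_T = v_s|_T$, and the second from the ratio bound after taking logarithms and using $\ln\bigl((1+t)/(1-t)\bigr)\le 2t/(1-t^2)$.

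However, your last step has a factor-of-two slip. You claim that in both subcases $|\xi-\eta|^{-1}\le \mu/\bigl(h\,[\sin(\pi/(2r+2))]^2\bigr)$ and that this makes the cone constant ``collapse'' to $M_1$. It does not. Substituting that uniform bound into the expression $\dfrac{2G_{r+1}h^{r+1}}{(1-G_{r+1}^2 h^{2r+2})|\xi-\eta|} + \dfrac{2s}{\chi}$ gives
\begin{equation*}
\frac{2\mu G_{r+1}h^r}{(1-G_{r+1}^2 h^{2r+2})[\sin(\pi/(2r+2))]^2} + \frac{2s}{\chi},
\end{equation*}
whose first term is twice the corresponding term in $M_1$. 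What you actually need is $|\xi-\eta|^{-1}\le \mu/\bigl(2h[\sin(\pi/(2r+2))]^2\bigr)$. In the same-subinterval case this is indeed available, since Lemma~\ref{lem:3.3} gives $|\xi-\eta|\ge 2h_i[\sin(\pi/(2r+2))]^2$; but in the straddling case $(\xi=b_i,\ \eta=a_{i+1})$ the hypothesis as written in the lemma only gives $|\xi-\eta|\ge h_i[\sin(\pi/(2r+2))]^2$, which is short by exactly the missing factor of two.

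It is worth noting that the paper's own proof silently uses, in the straddling case, the assumption $a_{i+1}-b_i \ge 2h_i[\sin(\pi/(2r+2))]^2$, which is stronger than the lemma's stated hypothesis by the same factor of two; so this appears to be a typo in the hypothesis of the lemma. You should either strengthen the spacing hypothesis to $2h_i[\sin(\pi/(2r+2))]^2 \le a_{i+1}-b_i$ (and then your argument correctly produces $M_1$), or keep the weaker hypothesis and accept the cone constant with the extra factor of $2$ on the first term; but as written, your conclusion that the constant matches $M_1$ in (6.12) does not follow from the estimate you derived in the straddling case.
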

\begin{proof}
  To simplify the exposition, we shall denote $G_{r+1}$ as $G$.  Equation
  \eqref{6.7} gives \eqref{6.10} and \eqref{6.2} implies that $v_s \in
  K(2s/\chi;S)$.  Since $ \cV_s|_T = v_s|_T$, it follows that $\cV_s|_T \in
  K(2s/\chi;S)$. If we observe that $1 - Gh^{r+1} \le 1 - G_{r+1,i}h_i^{r+1}$
  and $1 + G_{r+1,i}h_i^{r+1} \le 1 + Gh^{r+1}$ for $1 \le i \le I$, we derive
  from \eqref{6.7} that for  $1 \le i \le I$ and $x \in [a_i,b_i]$,
\begin{equation*}
(1 - Gh^{r+1}) v_s(x) \le \cV_s(x) \le (1 + Gh^{r+1}) v_s(x).
\end{equation*}
Applying $L_s^{\nu}$ to this inequality, we obtain \eqref{6.11} and in
particular, \eqref{6.11} holds for all $x \in T$.  A little thought
shows that for all $x \in T$,
\begin{equation*}
%\label{6.12n}
[L_s^{\nu} \cV_s](x) = (\bL_{s,\nu}(\cV_s|_T)(x).
\end{equation*} 

If $x,y \in T \cap [a_i,a_{i+1}]$, $1 \le i \le I$, and $x \neq y$, we obtain
from \eqref{6.11} that
\begin{multline*}
(L_s^{\nu}\cV_s)(x) \le (1 + G h^{r+1}) \lambda_s^{\nu} v_s(x)
\le (1 + G h^{r+1}) \exp(2s|x-y|/\chi) v_s(y)
\\
\le \frac{1 + G h^{r+1}}{1 - G h^{r+1}}\exp(2s|x-y|/\chi) (L_s^{\nu}\cV_s)(y) .
\end{multline*}
Taking logarithms on both sides of the above inequality, and noting that
$x$ and $y$ are interchangeable in the inequality, we find that
\begin{equation*}
|\ln(L_s^{\nu}\cV_s)(x) - \ln(L_s^{\nu}\cV_s)(y)|
\le \frac{2s}{\chi}|x-y| + [\ln(1 + G h^{r+1}) - \ln(1 - G h^{r+1})].
\end{equation*}

Using the trapezoidal rule and the convexity of $u \mapsto 1/u$,
\begin{multline*}
\ln (1 + G h^{r+1}) - \ln(1 - G h^{r+1})  = \int_{1 - G h^{r+1}}^{1 + G h^{r+1}}
\frac{1}{u} \, du
\\
\le \frac{1}{2}\Big[\frac{1}{1 - G h^{r+1}} + \frac{1}{1 + G h^{r+1}}\Big]
\Big[2G h^{r+1}\Big] = \frac{2G h^{r+1}}{1 - G^2 h^{2r+2}}.
\end{multline*}
To prove that $L_s^{\nu}\cV_s|_T \in K(M_1;T)$, it will suffice to prove that
\begin{multline}
\label{6.13n}
\frac{2s}{\chi}|x-y| + \frac{2G h^{r+1}}{1 - G^2 h^{2r+2}} 
\\
\le \frac{2s}{\chi}|x-y| + \Big[\mu \frac{Gh^r}{1- G^2 h^{2r+2}}\Big] 
\Big[\frac{1}{\sin(\pi/[2r+2])}\Big]^2 |x-y|,
\end{multline}
whenever $x,y  \in ([a_i,b_i] \cap T) \cup \{a_{i+1}\}$ for $1 \le i \le I$
and whenever $x,y  \in ([a_I,b_I] \cap T)$. (Of course, we assume, as we can,
that $x \neq y$). A calculation shows that this will be true if
\begin{equation*}
2h \le \mu \Big[\frac{1}{\sin(\pi/[2r+2])}\Big]^2 |x-y|.
\end{equation*}
If $x,y \in [a_i,b_i]$, we know that $|x-y| \ge 2 h_i [\sin(\pi/(2r+2)]^2$,
so it suffices to prove that $h \le \mu h_i$, which follows from the
definition of $\mu$.  We can assume that $x <y$, so if $x,y  \in [a_i,b_i]$,
the same argument applies.  If $y = a_{i+1}$, $|x-y| \ge |a_{i+1} - b_i|$,
and we assume that $|a_{i+1} - b_i| \ge 2 h_i  [\sin(\pi/(2r+2)]^2$, so
again the same argument applies and gives \eqref{6.13n}.
\end{proof}

\begin{remark}
\label{rem:6.1n}
If $I=1$, the condition on $a_{i+1} - b_i$ is vacuous and $\mu =1$.
\end{remark}

Our next lemma will play a crucial role in relating $\rmf(\bL_{s,\nu})$ to
$\rmf(L_s^{\nu})$.
\begin{lem}
\label{lem:6.4}
Let notation and assumptions be as in Lemma~\ref{lem:6.3}.  Let $G:=G_{r+1}$
be as in \eqref{rn:6.8} and $M_1$ as in \eqref{6.12}.  Assume
that $H:= H_{r+1}$ is a constant with $H >G$ and assume that $h <1$.  Define
$M_2$ by
\begin{equation*}
M_2 = M_1 + \frac{G}{H} \Big[\frac{\mu}{1 - [(G/H)h]^2}\Big]
\frac{1}{[\sin(\pi/[2r+2])]^2}.
\end{equation*}
If $K = K(M_2;T)$, we have
\begin{equation}
\label{6.13}
\lambda_s^{\nu} \cV_s (1 - H h^r) \le_{K} \bL_{s, \nu} \cV_s|_T \le_{K}
\lambda_s^{\nu} \cV_s (1 + H h^r).
\end{equation}
\end{lem}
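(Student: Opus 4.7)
The plan is to establish the two-sided cone inequality \eqref{6.13} by showing that both
\[
A^+ := \bL_{s,\nu}\cV_s|_T - (1 - Hh^r)\lambda_s^\nu \cV_s|_T
\quad \text{and} \quad
A^- := (1 + Hh^r)\lambda_s^\nu \cV_s|_T - \bL_{s,\nu}\cV_s|_T
\]
belong to $K(M_2;T)$. At points of $T$ we have $\cV_s|_T = v_s|_T$, so writing $B(x) := (\bL_{s,\nu}\cV_s)(x) = (L_s^\nu \cV_s)(x)$ and $D(x) := \lambda_s^\nu v_s(x)$, relation \eqref{6.11} gives $B(x) = D(x)(1 + \alpha(x))$ with $|\alpha(x)| \le G h^{r+1}$, and hence
\[
A^+(x) = D(x)[Hh^r + \alpha(x)], \qquad A^-(x) = D(x)[Hh^r - \alpha(x)].
\]
Since $H > G$ and $h < 1$ imply $Hh^r > Gh^{r+1} \ge |\alpha(x)|$, both $A^\pm(x)$ are strictly positive on $T$.

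I would next decompose $\ln A^\pm(x) = \ln D(x) + \ln(Hh^r \pm \alpha(x))$ and bound the two pieces separately. For $x,y \in T$, $|\ln D(x) - \ln D(y)| \le (2s/\chi)|x-y|$ by \eqref{6.2}. Since $Hh^r \pm \alpha(z) \in [Hh^r - Gh^{r+1}, Hh^r + Gh^{r+1}]$ for every $z \in T$, the trapezoidal-rule estimate for $\int 1/u\,du$ (exactly as used in Lemma~\ref{lem:6.3}) furnishes the uniform bound
\[
|\ln(Hh^r \pm \alpha(x)) - \ln(Hh^r \pm \alpha(y))| \le \ln\frac{1 + Gh/H}{1 - Gh/H} \le \frac{2Gh/H}{1 - (Gh/H)^2}.
\]

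The decisive step is to convert this uniform estimate into a Lipschitz bound in $|x-y|$. Mirroring the final paragraph of the proof of Lemma~\ref{lem:6.3}, for distinct $x,y$ in one of the extended intervals $T \cap ([a_i,b_i] \cup \{a_{i+1}\})$ (or $T \cap [a_I,b_I]$), the spacing of the extended Chebyshev nodes together with the hypothesis on $a_{i+1} - b_i$ yields $|x-y| \ge 2h_i[\sin(\pi/(2r+2))]^2 \ge (2h/\mu)[\sin(\pi/(2r+2))]^2$. Dividing the uniform bound by this minimum spacing gives
\[
\frac{2Gh/H}{1 - (Gh/H)^2} \le \frac{G/H}{1 - (Gh/H)^2}\cdot\frac{\mu}{[\sin(\pi/(2r+2))]^2}|x-y|.
\]
Adding the two bounds produces $|\ln A^\pm(x) - \ln A^\pm(y)| \le M_2'|x-y|$ where $M_2' = (2s/\chi) + \frac{G/H}{1-(Gh/H)^2}\frac{\mu}{[\sin(\pi/(2r+2))]^2}$, and since $2s/\chi \le M_1$ we have $M_2' \le M_2$. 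The bound for general $x,y \in T$ then follows by chaining through consecutive extended intervals, as at the end of the proof of Lemma~\ref{lem:6.3}. This establishes $A^\pm \in K(M_2;T)$ and hence \eqref{6.13}.

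The principal technical obstacle is the conversion of the \emph{uniform} bound on the $\alpha$-fluctuation into a \emph{Lipschitz} bound; this conversion depends essentially on the lower bound $2h_{\min}[\sin(\pi/(2r+2))]^2$ on the distance between consecutive distinct points of $T$ inside an extended interval, and it is precisely this step that produces the extra term $\frac{G/H}{1 - (Gh/H)^2}\frac{\mu}{[\sin(\pi/(2r+2))]^2}$ added to $M_1$ in the definition of $M_2$.
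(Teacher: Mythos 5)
Your proof is correct and takes essentially the same approach as the paper. You factor out $D(x)=\lambda_s^\nu v_s(x)$ and write the entries as $A^\pm(x)=D(x)\bigl[Hh^r\pm\alpha(x)\bigr]$ with $|\alpha|\le Gh^{r+1}$, then add the Lipschitz bound for $\ln D$ to a bound on the bounded fluctuation $\ln(Hh^r\pm\alpha)$ obtained by combining the trapezoidal estimate with the minimum-spacing inequality; the paper carries out the identical computation in ratio form (compare \eqref{6.14} and \eqref{6.16n}), simply keeping the factor $\cV_s(x)/\cV_s(y)$ bundled rather than isolating $\alpha$. One harmless discrepancy: you control the $v_s$-factor with the sharp constant $2s/\chi$, whereas the paper uses the looser $M_1\ge 2s/\chi$; both land below $M_2$, so the conclusion is unaffected.
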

\begin{proof}
Our previous results show that $(L_s^{\nu} \cV_s)(x)
= (\bL_{s,\nu} \cV_s)(x)$ for all $x \in T$, and, for all $x \in S = [a,b]$,
\begin{equation*}
\lambda_s^{\nu} \cV_s(x) \frac{1 - G h^{r+1}}{ 1 + G h^{r+1}}
\le (L_s^{\nu} \cV_s)(x) \le
\lambda_s^{\nu} \cV_s(x) \frac{1 + G h^{r+1}}{ 1 - G h^{r+1}}.
\end{equation*}
Recalling that $\cV_s(x) = v_s(x)$ for $x \in T$, we have for $x \in T$,
\begin{multline*}
\lambda_s^{\nu} \cV_s(x)(1 + H h^r) - (L_s^{\nu} \cV_s)(x)
\le \lambda_s^{\nu} \cV_s(x)(1 + H h^r) -  \lambda_s^{\nu}(1- G h^{r+1}) 
\cV_s(x)
\\
= \lambda_s^{\nu}(H h^r + Gh^{r+1})\cV_s(x)
= \lambda_s^{\nu} h^r \cV_s(x)(1 + [G/H] h) H.
\end{multline*}
If $y \in T$, a similar argument
shows that
\begin{multline*}
\lambda_s^{\nu} \cV_s(y)(1 + H h^r) - (L_s^{\nu} \cV_s)(y)
\ge \lambda_s^{\nu} \cV_s(y)(1 + H h^r) -  \lambda_s^{\nu}(1+ G h^{r+1}) 
\cV_s(y)
\\
= \lambda_s^{\nu} h^r \cV_s(y)(1 - [G/H] h) H.
\end{multline*}
Using Lemma~\ref{lem:6.3} and the above estimates, we find that
\begin{multline}
\label{6.14}
\frac{\lambda_s^{\nu} \cV_s(x)(1 + H h^r) - (L_s^{\nu} \cV_s)(x)}
{\lambda_s^{\nu} \cV_s(y)(1 + H h^r) - (L_s^{\nu} \cV_s)(y)}
\le \frac{\cV_s(x)(1 + [G/H] h)}{\cV_s(y)(1 - [G/H] h)}
\\
\le \exp(M_1|x-y|) \frac{1 + [G/H] h}{1 - [G/H] h}.
\end{multline}
The right half of \eqref{6.13} will follow from \eqref{6.14} if we prove
that, for all $x,y \in T$ with $x \neq y$,
\begin{equation}
\label{6.16n}
 \exp(M_1|x-y|) \frac{1 + [G/H] h}{1 - [G/H] h}
\le \exp(M_2|x-y|).
\end{equation}
As in Lemma~\ref{lem:6.3}, it suffices to verify \eqref{6.16n} for all
points $x \neq y$, $x,y \in [a_i,a_{i+1}] \cap T$, $1 \le i \le I$, where
$a_{I+1} = b_I$.

Arguing as in Lemma~\ref{lem:6.3}, we see that
\begin{equation*}
%\label{6.15}
\ln(1 + [G/H] h) - \ln(1 - [G/H] h) \le \frac{G}{H} 
\frac{2h}{1 - ([G/H] h)^2}.
\end{equation*}
If we take the log of both sides of \eqref{6.16n}, it suffices to prove
that
\begin{multline*}
M_1 |x-y| + \frac{G}{H} \frac{2h}{1 - ([G/H] h)^2}
\\
\le M_1 |x-y| + \frac{G}{H}  \Big[\frac{\mu}{1 - [(G/H)h]^2}\Big]
\frac{1}{[\sin(\pi/[2r+2])]^2} |x-y|.
\end{multline*}
As was proved in Lemma~\ref{lem:6.3}, all $x,y \in [a_i,a_{i+1}] \cap T$
with $x \neq y$ satisfy $|x-y| \ge 2 h_i  [\sin(\pi/(2r+2)]^2$.
Since $2|x-y| \ge 2h_i/[\sin(\pi/[2r+2])]^2$, we see
after simplification, the above inequality will be satisfied
if $h \le \mu h_i$, which holds by the definition of $\mu$.  This proves
the right hand side of \eqref{6.13}. The proof of the left hand side 
of inequality \eqref{6.13} follows by an exactly analogous argument and
is left to the reader.
\end{proof}

Our next theorem connects $\rmf(\bL_{s,\nu})$ and $\rmf(L_s^{\nu})$.  To use the
theorem, we shall need to estimate various constants, and we shall carry
this out in the next section for an important class of examples.

\begin{thm}
\label{thm:6.5}
Let notation and assumptions be as in Lemma~\ref{lem:6.3} and let
$H$ and $M_2$ be as in Lemma~\ref{lem:6.4}. Assume that $\nu, h, s$ and
$r$ have been selected so that $(\bL_{s,\nu}(K(M;T)) \subset K(M';T)$ where
$0 < M' < M$ and $M \ge M_2$ (see Theorem~\ref{thm:5.6n}). Then we have
that $\rmf(\bL_{s, \nu})$, the spectral radius of $\bL_{s, \nu}$, satisfies
\begin{equation*}
\lambda_s^{\nu}(1 - H h^r) \le \rmf(\bL_{s,\nu}) \le \lambda_s^{\nu}(1 + H h^r).
\end{equation*}
\end{thm}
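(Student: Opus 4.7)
The plan is to combine Lemma~\ref{lem:6.4}, which provides the crucial sandwich estimate
\begin{equation*}
\lambda_s^{\nu}(1 - H h^r) \cV_s|_T \le_{K(M_2;T)} \bL_{s,\nu}\cV_s|_T \le_{K(M_2;T)} \lambda_s^{\nu}(1 + H h^r) \cV_s|_T,
\end{equation*}
with the cone comparison principle of Lemma~\ref{lem:cone-compare}, which translates such a sandwich into upper and lower bounds on the spectral radius. The main work is already contained in the preceding lemmas, so the theorem reduces to verifying compatibility of hypotheses.

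First I would upgrade the ordering produced by Lemma~\ref{lem:6.4} from the cone $K(M_2;T)$ to the cone $K(M;T)$. Since $M \ge M_2$, one has the inclusion $K(M_2;T) \subset K(M;T)$: indeed, if $F \in K(M_2;T)$, then $|\ln F(\xi)-\ln F(\eta)| \le M_2|\xi-\eta| \le M|\xi-\eta|$, so $F \in K(M;T)$. Consequently every inequality $u \le_{K(M_2;T)} v$ implies $u \le_{K(M;T)} v$, and Lemma~\ref{lem:6.4} yields
\begin{equation*}
\lambda_s^{\nu}(1 - H h^r) \cV_s|_T \le_{K(M;T)} \bL_{s,\nu}\cV_s|_T \le_{K(M;T)} \lambda_s^{\nu}(1 + H h^r) \cV_s|_T.
\end{equation*}

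Next I would check the hypotheses required to apply Lemma~\ref{lem:cone-compare} with the cone $K(M;T)$, the operator $\bL_{s,\nu}$, and the vector $\cV_s|_T$. Lemma~\ref{lem:6.3} shows $\cV_s|_T \in K(2s/\chi;T)$, and since $2s/\chi \le M_1 \le M_2 \le M$, this gives $\cV_s|_T \in K(M;T)$; strict positivity of $v_s$ on $S$ ensures $\cV_s|_T \ne 0$. The remaining hypothesis, $\bL_{s,\nu}(K(M;T)\setminus\{0\}) \subset K(M';T) \subset K(M;T)$, is exactly what is assumed in the theorem. Thus Lemma~\ref{lem:cone-compare} applies with $\alpha = \lambda_s^{\nu}(1 - Hh^r)$ and $\beta = \lambda_s^{\nu}(1 + Hh^r)$ and delivers
\begin{equation*}
\lambda_s^{\nu}(1 - H h^r) \le \rmf(\bL_{s,\nu}) \le \lambda_s^{\nu}(1 + H h^r).
\end{equation*}

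The only subtlety is positivity of $\alpha$: Lemma~\ref{lem:cone-compare} asks for a positive lower bound. If $Hh^r < 1$ this is automatic, and this is the regime of interest since the whole point of the estimate is that $h$ is small. In the degenerate case $Hh^r \ge 1$, the lower inequality is trivially satisfied because $\rmf(\bL_{s,\nu})$ is strictly positive (it is the Perron eigenvalue guaranteed by the Birkhoff--Hopf framework of Section~\ref{sec:theory-c} once $\bL_{s,\nu}$ maps $K(M;T)\setminus\{0\}$ into $K(M';T) \subset \interior(K(M;T))$), while $\lambda_s^{\nu}(1-Hh^r) \le 0$. I do not anticipate a genuine obstacle here; the essential technical content --- controlling the interpolation error and expressing it as a cone inequality --- was handled in Lemmas~\ref{lem:6.3} and~\ref{lem:6.4}, and what remains is bookkeeping of cone inclusions and a direct appeal to Lemma~\ref{lem:cone-compare}.
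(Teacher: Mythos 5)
Your proof is correct and follows essentially the same route as the paper: upgrade the cone inequality of Lemma~\ref{lem:6.4} from $K(M_2;T)$ to $K(M;T)$ via the inclusion $K(M_2;T)\subset K(M;T)$, then invoke Lemma~\ref{lem:cone-compare}. You are somewhat more explicit than the paper in verifying that $\cV_s|_T\in K(M;T)\setminus\{0\}$ (via $2s/\chi\le M_1\le M_2\le M$) and in noting the sign of $\alpha$, but these are bookkeeping points the paper leaves implicit.
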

\begin{proof}
  Our previous results show that $\bL_{s,\nu}$ has a unique, strictly
  positive eigenvector $w_{s,\nu} \in K(M;T)$ with $\|w_{s,\nu}\|=1$.  The
  eigenvalue corresponding to $w_{s,\nu}$ is $\rmf(\bL_{s,\nu})$.
  Furthermore, for every $u \in K(M;T)\setminus\{0\}$, $\lim_{m \rightarrow
    \infty} (\bL_{s,\nu}^m u/\|\bL_{s,\nu}^m u\|) = w_{s,\nu}$, with
  convergence in the $\sup$ norm topology on $\R^Q$.

If we use \eqref{6.13}, but define $K = K(M;T)$, then because $M \ge M_2$ and
$K(M;T) \supset K(M_2;T)$, we obtain
\begin{equation*}
%\label{6.16}
\lambda_s^{\nu} \cV_s (1 - H h^r) \le _K  \bL_{s,\nu} \cV_s 
\le _K \lambda_s^{\nu} \cV_s (1 + H h^r).
\end{equation*}
The theorem now follows directly from Lemma~\ref{lem:cone-compare}.
%If we write, for every positive integer $m$, $z_m = (\bL_{s,\nu}^m
%\cV_s/\|\bL_{s,\nu}^m \cV_s\|)$, apply $\bL_{s,\nu}^m$ to \eqref{6.16}, and
%then divide by $\|\bL_{s,\nu}^m \cV_s\|$, we obtain
%\begin{equation*}
%\lambda_s^{\nu} (1 - H h^r) z_m\le _K  \bL_{s,\nu} z_m 
%\le _K \lambda_s^{\nu} (1 + H h^r) z_m.
%\end{equation*}
%Since we know that $\lim_{m \rightarrow 0} z_m = w_{s,\nu}$, we conclude that
%\begin{equation*}
%\lambda_s^{\nu} (1 - H h^r) w_{s,\nu} \le _K  \rmf(\bL_{s,\nu})  w_{s,\nu}
%\le _K \lambda_s^{\nu} (1 + H h^r)  w_{s,\nu},
%\end{equation*}
%which proves Theorem~\ref{thm:6.5}.
\end{proof}

Our ultimate goal has been to provide rigorous upper and lower bounds on
$\lambda_s = \rmf(L_s)$ in terms of the eigenvalues of computable matrices, as
was done in \cite{hdcomp1}. This follows immediately from
Theorem~\ref{thm:6.5}.
\begin{thm}
\label{thm:6.6}
Under the hypotheses of Theorem~\ref{thm:6.5}, we have
\begin{equation*}
[(1 + H h^r)^{-1} \rmf(\bL_{s,\nu})]^{1/\nu} \le \lambda_s \le
[(1 - H h^r)^{-1} \rmf(\bL_{s,\nu})]^{1/\nu},
\end{equation*}
where the entries of the matrices $[1 + H h^r]^{-1} \bL_{s,\nu}$ and
$[1 - H h^r]^{-1} \bL_{s,\nu}$ differ by $O(h^r)$.
\end{thm}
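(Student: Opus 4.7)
The plan is to derive Theorem~\ref{thm:6.6} as a more or less immediate algebraic consequence of the two-sided estimate on $\rmf(\bL_{s,\nu})$ already given in Theorem~\ref{thm:6.5}. Since Theorem~\ref{thm:6.5} provides
\begin{equation*}
\lambda_s^{\nu}(1 - H h^r) \le \rmf(\bL_{s,\nu}) \le \lambda_s^{\nu}(1 + H h^r),
\end{equation*}
both bounds in Theorem~\ref{thm:6.6} will follow by isolating $\lambda_s^{\nu}$, dividing by the appropriate factor $(1 \pm H h^r)$, and then taking a $\nu$-th root. The only mild subtlety is that we need $1 - H h^r > 0$ in order to legitimately divide and preserve the inequality; this is implicit in the hypotheses (we may clearly assume $h$ has been chosen small enough that $H h^r < 1$, since otherwise both statements are vacuous), so I would record this reduction as a one-line preliminary remark.

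First I would treat the upper bound on $\lambda_s$. From the left half of the displayed inequality in Theorem~\ref{thm:6.5},
\begin{equation*}
\lambda_s^{\nu} \le (1 - H h^r)^{-1} \rmf(\bL_{s,\nu}),
\end{equation*}
and since $\lambda_s > 0$, taking positive $\nu$-th roots yields $\lambda_s \le [(1 - H h^r)^{-1} \rmf(\bL_{s,\nu})]^{1/\nu}$. Symmetrically, from the right half of Theorem~\ref{thm:6.5}'s inequality, $(1 + H h^r)^{-1} \rmf(\bL_{s,\nu}) \le \lambda_s^{\nu}$, and the same $\nu$-th root step gives the lower bound $[(1 + H h^r)^{-1} \rmf(\bL_{s,\nu})]^{1/\nu} \le \lambda_s$. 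Concatenating these two inequalities proves the main estimate.

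For the concluding assertion about the matrices, I would simply compute
\begin{equation*}
(1 - H h^r)^{-1} - (1 + H h^r)^{-1} = \frac{2 H h^r}{1 - H^2 h^{2r}} = O(h^r),
\end{equation*}
so the two scalar multipliers of $\bL_{s,\nu}$ differ by a quantity of order $h^r$. Since $\bL_{s,\nu}$ has entries bounded independently of $h$ (its entries are products of fixed values of $g_\omega^s$ with Lagrange basis values at the collocation points), the entries of the two matrices $[1 + H h^r]^{-1} \bL_{s,\nu}$ and $[1 - H h^r]^{-1} \bL_{s,\nu}$ differ entrywise by $O(h^r)$, completing the proof.

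There is really no obstacle here; the substantive work was already carried out in Lemmas~\ref{lem:6.3}--\ref{lem:6.4} and Theorem~\ref{thm:6.5}, where the cone comparison $\lambda_s^{\nu} \cV_s (1 - H h^r) \le_K \bL_{s,\nu} \cV_s \le_K \lambda_s^{\nu} \cV_s (1 + H h^r)$ was established and Lemma~\ref{lem:cone-compare} was used to convert it into an estimate for the spectral radius. Theorem~\ref{thm:6.6} is the user-facing reformulation of that estimate as a rigorous sandwich for $\lambda_s$ itself, suitable for locating $s_\ast$ by bisection in $s$.
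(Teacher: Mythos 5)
Your proof is correct and follows exactly the route the paper intends: the paper provides no separate proof of Theorem~\ref{thm:6.6} precisely because it ``follows immediately from Theorem~\ref{thm:6.5}'' by the same rearrangement you carry out (divide by $1 \mp H h^r$, take $\nu$-th roots, and note the scalar factors differ by $O(h^r)$). Your remark that one should assume $1 - H h^r > 0$ is a sensible precaution, and in the paper's framework this holds under the smallness restrictions on $h$ that enter the hypotheses of Theorem~\ref{thm:6.5}.
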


\section{Calculating the optimal interval $[a,b]$ and 
estimating $E$ and $\chi$.}

\label{sec:est-constants}
Throughout this section, we shall assume at least the hypotheses of
Theorem~\ref{thm:2.4}, so $L_s$ has a strictly positive $C^m$ eigenfunction
$v_s$.  We shall take $S=[a,b], a<b$ in (H2). If $S_0$ is a closed, nonempty
subset of $S$ and $\theta_i(S_0) \subset S_0$ for $1 \le i \le n$, then
$L_s:C(S) \to C(S)$ induces a bounded linear operator $L_{s,S_0}: C(S_0) \to
C(S_0)$.  It is often desirable to replace the original interval $[a,b]$ by a
smaller interval (or union of intervals) $S_0 \subset [a,b]$ such that
$\theta_i(S_0) \subset S_0$ for $1 \le i \le n$, and we first describe a class
of examples for which this can be easily done. Note that $v_s|_{S_0}$ is
strictly positive; and since $L_{s,S_0}(v_s|_{S_0}) = \rmf(L_s) (v_s|_{S_0})$,
the following lemma implies that $\rmf(L_{s,S_0}) = \rmf(L_s)$. Although this
is a special case of another well-known result, a proof is provided for the
readers' convenience.
% See Lemma 2.2 in 
%\cite{Mallet-Paret-Nussbaum-2002} for a nonlinear generalization.
\begin{lem}
\label{lem:4.3}
Let $S_0$ be a compact metric space, $\W:= C(S_0)$ and $P=\{f \in
\W: f(t) \ge 0, \, \forall t \in S_0\}$. Assume that $L:\W \to \W$ is
a bounded linear operator such that $L(P) \subset P$. If $w \in \W$
and $w(t) >0$ for all $t \in S_0$, then
\begin{equation*}
%\label{4.15}
\rmf(L)= \lim_{k \rightarrow \infty} \|L^k\|^{1/k} = \lim_{k \rightarrow \infty}
\|L^k w\|^{1/k}.
\end{equation*}
If, in addition, $L w = \lambda w$, then $\lambda = \rmf(L)$; and there exists a
constant $C \ge 1$ such that $\|L^k\| \le C \lambda^k$ for all positive
integers $k$.
\end{lem}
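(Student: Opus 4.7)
The plan is to exploit the positivity of $L$ together with the fact that a strictly positive continuous function on a compact space is uniformly bounded below. First, I note that the Gelfand formula $\rmf(L)=\lim_{k\to\infty}\|L^k\|^{1/k}$ is standard for any bounded linear operator on a Banach space, so the only real work is to show $\lim_{k\to\infty}\|L^kw\|^{1/k}$ exists and equals the same quantity.

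The key comparison step is the following. Since $S_0$ is compact and $w$ is continuous and strictly positive, set $\epsilon:=\min_{t\in S_0}w(t)>0$. Then for any $f\in W$, the pointwise inequality $|f(t)|\le\|f\|_\infty\le(\|f\|_\infty/\epsilon)\,w(t)$ shows that the two functions $(\|f\|_\infty/\epsilon)w\pm f$ both lie in $P$. Applying $L^k$ and using $L^k(P)\subset P$, one gets
\begin{equation*}
|(L^kf)(t)|\le\frac{\|f\|_\infty}{\epsilon}(L^kw)(t)\qquad\forall\,t\in S_0,
\end{equation*}
so $\|L^kf\|_\infty\le(\|L^kw\|_\infty/\epsilon)\|f\|_\infty$. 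Taking the sup over unit $f$ gives $\|L^k\|\le\|L^kw\|/\epsilon$. Combining this with the trivial $\|L^kw\|\le\|L^k\|\,\|w\|$ yields the sandwich
\begin{equation*}
\frac{\|L^kw\|}{\|w\|}\le\|L^k\|\le\frac{\|L^kw\|}{\epsilon}.
\end{equation*}
Taking $k$-th roots and letting $k\to\infty$, the outer factors $\|w\|^{1/k}$ and $\epsilon^{1/k}$ tend to $1$, so the middle and outside limits all coincide (existence of $\lim\|L^k\|^{1/k}=\rmf(L)$ from Gelfand forces existence of $\lim\|L^kw\|^{1/k}$), proving the two displayed equalities.

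For the second half, suppose $Lw=\lambda w$. Since $Lw\in P$ and $w>0$ pointwise, dividing gives $\lambda\ge 0$. By iteration $L^kw=\lambda^kw$, so $\|L^kw\|=\lambda^k\|w\|$ and $\|L^kw\|^{1/k}=\lambda\,\|w\|^{1/k}\to\lambda$, which identifies $\rmf(L)=\lambda$ by the first part. Finally, substituting $\|L^kw\|=\lambda^k\|w\|$ into the right-hand side of the sandwich above gives
\begin{equation*}
\|L^k\|\le\frac{\lambda^k\|w\|_\infty}{\epsilon}=C\lambda^k,\qquad C:=\frac{\|w\|_\infty}{\min_{S_0}w}\ge 1,
\end{equation*}
for every positive integer $k$, yielding the final claim. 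No step here is subtle; the only care needed is in the positivity/compactness argument that produces the pointwise domination $|f|\le(\|f\|_\infty/\epsilon)w$ and its preservation under $L$, which is the heart of the proof.
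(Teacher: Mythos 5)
Your proof is correct and follows essentially the same route as the paper: set a lower bound $\epsilon=\min_{S_0}w>0$ (the paper calls it $\alpha$), derive the pointwise domination $\pm f\le(\|f\|_\infty/\epsilon)w$ for unit $f$, push it through $L^k$ by positivity to get the two-sided comparison $\|L^kw\|/\|w\|\le\|L^k\|\le\|L^kw\|/\epsilon$, take $k$-th roots, and in the eigenvector case plug $\|L^kw\|=\lambda^k\|w\|$ back into the same bound. The only cosmetic addition in your write-up is the explicit observation that $\lambda\ge0$, which the paper leaves implicit.
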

\begin{proof}
Since $S_0$ is compact, there exists $\alpha >0$ such that $w(t) \ge \alpha$ for
all $t \in S_0$. If $f \in \W$ and $\|f\| \le 1$, it follows that for all
$t \in S_0$,
\begin{equation*}
-(1/\alpha) w(t) \le f(t) \le (1/\alpha) w(t).
\end{equation*}
Because $L$ is order-preserving in the partial ordering from $P$,
\begin{equation*}
-(1/\alpha) (L^kw)(t) \le (L^kf)(t) \le (1/\alpha) (L^kw)(t)
\end{equation*}
for all positive integers $k$, which implies that
\begin{equation*}
\|L^k\|^{1/k} = \Big(\sup \{\|L^kf\|: f \in \W, \|f\|\le 1\}\Big)^{1/k} 
\le (1/\alpha)^{1/k} \|L^k w\|^{1/k}.
\end{equation*}
We also have that 
\begin{equation*}
(1/\alpha)^{1/k} \|L^k w\|^{1/k} \le (1/\alpha)^{1/k} \|w\|^{1/k} \|L^k\|^{1/k}.
\end{equation*}
Since
$\lim_{k \rightarrow \infty} \|L^k\|^{1/k} = \rmf(L)$ and
$\lim_{k \rightarrow \infty} (1/\alpha)^{1/k} = \lim_{k \rightarrow \infty}
\|w\|^{1/k} = 1$,
we conclude that $\lim_{k \rightarrow \infty}\|L^k w\|^{1/k} = \rmf(L)$.

If $L w  = \lambda w$, the above argument shows that
\begin{equation*}
(1/\alpha) \lambda^k w(t) \le (L^k f)(t) \le (1/\alpha) \lambda^k w(t),
\end{equation*}
which implies $\|L^k\| \le \tfrac{1}{\alpha} \|w\| \lambda^k$, so the lemma
is satisfied with $C = \tfrac{1}{\alpha} \|w\| \ge 1$.
\end{proof}

Let $S_0=[\amf_0,\bmf_0]$ be a compact interval of reals, $\amf_0 < \bmf_0$,
and let $\B$ be a finite set of real numbers.  For each $\beta \in \B$,
$\theta_{\beta}: S_0 \to \R$. We make the following hypothesis:

(H4) (i) For each $\beta \in \B$, $\theta_{\beta}: S_0 \to S_0$ and 
$\theta_{\beta}$ is a continuous map.

(ii) There exists $\gamma \in \B$ and $\Gamma \in \B$ such that for
all $x \in S_0$ and all $\beta \in \B$, $\theta_{\Gamma}(x) \le \theta_{\beta}(x)
\le \theta_{\gamma}(x)$.

(iii) $x \mapsto \theta_{\gamma}(x)$ and  $x \mapsto \theta_{\Gamma}(x)$ are
strictly decreasing functions on $S_0$.

The example we have in mind is that $\B$ is a finite set of distinct real
numbers with $\beta \ge \gamma >0$ for all $\beta \in \B$ and $\theta_{\beta}(x)
= 1/(x + \beta)$ and $S_0 = [0,1/\gamma]$, but there seems no gain in
specializing at this point.

\begin{lem}
\label{lem:5.1}
Assume (H4). Define $\amf_1 = \theta_{\Gamma}(\bmf_0)$ and $\bmf_1 =
\theta_{\gamma}(\amf_0)$. Then $\amf_0 \le \amf_1 \le \bmf_1 \le
\bmf_0$, $\amf_1 < \bmf_1$, and $\theta_{\beta}(x) \in
[\amf_1,\bmf_1]$ for all $x \in S_0$ and all $\beta \in \B$.
\end{lem}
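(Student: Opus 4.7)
The plan is to derive everything from a short chain of inequalities, combining the pointwise sandwich in (H4)(ii) with the strict monotonicity in (H4)(iii), and then using (H4)(i) only to relate the endpoints of $S_0$ back to $S_0$.

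First I would verify the outer inequalities $\amf_0 \le \amf_1$ and $\bmf_1 \le \bmf_0$. These follow immediately from (H4)(i): since $\theta_{\Gamma}$ and $\theta_{\gamma}$ map $S_0$ into $S_0 = [\amf_0,\bmf_0]$, we have $\amf_1 = \theta_{\Gamma}(\bmf_0) \in S_0$ and $\bmf_1 = \theta_{\gamma}(\amf_0) \in S_0$. Next I would establish the strict inequality $\amf_1 < \bmf_1$. Using (H4)(ii) with $\beta = \gamma$ applied at $x = \bmf_0$ gives $\amf_1 = \theta_{\Gamma}(\bmf_0) \le \theta_{\gamma}(\bmf_0)$, and then the strict monotonicity of $\theta_{\gamma}$ from (H4)(iii), together with $\amf_0 < \bmf_0$, yields $\theta_{\gamma}(\bmf_0) < \theta_{\gamma}(\amf_0) = \bmf_1$, so $\amf_1 < \bmf_1$.

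Finally, to prove the containment $\theta_{\beta}(x) \in [\amf_1,\bmf_1]$ for every $x \in S_0$ and every $\beta \in \B$, I would apply (H4)(ii) followed by (H4)(iii) at each end. For the upper bound, $\theta_{\beta}(x) \le \theta_{\gamma}(x) \le \theta_{\gamma}(\amf_0) = \bmf_1$, where the second step uses that $\theta_{\gamma}$ is decreasing and $x \ge \amf_0$. For the lower bound, $\theta_{\beta}(x) \ge \theta_{\Gamma}(x) \ge \theta_{\Gamma}(\bmf_0) = \amf_1$, where the second step uses that $\theta_{\Gamma}$ is decreasing and $x \le \bmf_0$.

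I do not expect any substantive obstacle: every inequality is a one-line consequence of a single hypothesis. The only point that requires attention is to invoke the strict form of monotonicity when deducing $\amf_1 < \bmf_1$, which implicitly uses $\amf_0 < \bmf_0$ from the standing assumption that $S_0$ is a nondegenerate compact interval. The argument does not use continuity of $\theta_{\beta}$ for $\beta \neq \gamma,\Gamma$, nor any structural feature beyond the sandwich and the two monotonicities.
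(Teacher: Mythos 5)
Your proof is correct and follows essentially the same approach as the paper's: derive the outer bounds from (H4)(i), the strict inequality $\amf_1 < \bmf_1$ from (ii) at $\bmf_0$ followed by strict monotonicity in (iii), and the containment by sandwiching with (ii) and then applying monotonicity in (iii) at each endpoint. The only cosmetic difference is the order in which you handle the upper and lower bounds of the containment.
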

\begin{proof}
Property (i) in (H4) implies that $\amf_0 \le \amf_1 \le \bmf_0$ and 
$\amf_0 \le \bmf_1 \le \bmf_0$.  Property (ii) implies that $\theta_{\Gamma}(\bmf_0)
= \amf_1 \le \theta_{\gamma}(\bmf_0)$ and  Property (iii) implies that
$\theta_{\gamma}(\bmf_0) <  \theta_{\gamma}(\amf_0) = \bmf_1$, so $\amf_1 < \bmf_1$.
For all $x \in [\amf_0,\bmf_0]$ and all $\beta \in \B$, $\theta_{\Gamma}(x)
\le \theta_{\beta}(x)$ (Property (ii)) and $\theta_{\Gamma}(\bmf_0) 
\le \theta_{\Gamma}(x)$ (Property (iii)), so $\amf_1 \le \theta_{\beta}(x)$.
Similarly, $\theta_{\beta}(x)  \le \theta_{\gamma}(x)$ 
and $\theta_{\gamma}(x) \le \theta_{\gamma}(\amf_0) = \bmf_1$,
so $\theta_{\beta}(x)  \le \bmf_1$.
\end{proof}

\begin{lem}
\label{lem:5.2}
Assume (H4). Also assume that for $1 \le j \le k$, we have found an increasing
sequence of reals $\amf_0 \le \amf_1 \le \ldots \le \amf_k$ and a decreasing
sequence of reals $\bmf_0 \ge \bmf_1 \ge \ldots \ge \bmf_k$ such that $\bmf_j -\amf_j >0$ for
$1 \le j \le k$ and $\theta_{\beta}([\amf_j,\bmf_j]) \subset [\amf_{j+1},\bmf_{j+1}]$ for
$0 \le j \le k-1$  and all $\beta \in \B$. Define $\amf_{k+1} = \theta_{\Gamma}(\bmf_k)$
and $\bmf_{k+1} = \theta_{\gamma}(\amf_k)$. Then we have
$\amf_k \le \amf_{k+1}$, $\bmf_{k+1} \le \bmf_{k}$, $\bmf_{k+1} - \amf_{k+1} >0$ and
$\theta_{\beta}([\amf_k,\bmf_k]) \subset [\amf_{k+1},\bmf_{k+1}]$ for all $\beta \in B$.
\end{lem}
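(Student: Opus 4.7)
The proof will be an essentially direct induction modeled on the argument in Lemma~\ref{lem:5.1}, exploiting the three properties of (H4), with the inductive hypothesis playing the role that (H4)(i) played for the base case.

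First I would establish the monotonicity statements $\amf_k \le \amf_{k+1}$ and $\bmf_{k+1} \le \bmf_k$. For $\amf_k \le \amf_{k+1} = \theta_\Gamma(\bmf_k)$, apply the hypothesis for $j = k-1$ (using (H4)(i) if $k = 0$) to obtain $\theta_\Gamma(\bmf_{k-1}) \in [\amf_k, \bmf_k]$, so $\theta_\Gamma(\bmf_{k-1}) \ge \amf_k$. Since $\bmf_k \le \bmf_{k-1}$ and $\theta_\Gamma$ is strictly decreasing by (H4)(iii), one gets $\theta_\Gamma(\bmf_k) \ge \theta_\Gamma(\bmf_{k-1}) \ge \amf_k$. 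The estimate $\bmf_{k+1} = \theta_\gamma(\amf_k) \le \bmf_k$ follows by the symmetric argument from $\theta_\gamma(\amf_{k-1}) \le \bmf_k$, the inequality $\amf_k \ge \amf_{k-1}$, and the strict decrease of $\theta_\gamma$.

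Next I would verify non-degeneracy $\bmf_{k+1} > \amf_{k+1}$. By (H4)(ii), $\theta_\Gamma(\bmf_k) \le \theta_\gamma(\bmf_k)$, and by (H4)(iii) combined with the inductive assumption $\amf_k < \bmf_k$, $\theta_\gamma(\bmf_k) < \theta_\gamma(\amf_k)$. Chaining these gives $\amf_{k+1} = \theta_\Gamma(\bmf_k) < \theta_\gamma(\amf_k) = \bmf_{k+1}$.

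Finally, the containment $\theta_\beta([\amf_k, \bmf_k]) \subset [\amf_{k+1}, \bmf_{k+1}]$ is essentially the same calculation as in Lemma~\ref{lem:5.1}: for $x \in [\amf_k, \bmf_k]$ and $\beta \in \B$, (H4)(ii) sandwiches $\theta_\Gamma(x) \le \theta_\beta(x) \le \theta_\gamma(x)$, and (H4)(iii) gives $\theta_\Gamma(x) \ge \theta_\Gamma(\bmf_k) = \amf_{k+1}$ and $\theta_\gamma(x) \le \theta_\gamma(\amf_k) = \bmf_{k+1}$. Since each claim reduces to a one-line application of the hypotheses, there is no real obstacle; the only mild care needed is treating $k = 0$ separately (where (H4)(i) replaces the vacuous $j = k-1$ containment), but for $k \ge 1$ the argument is uniform.
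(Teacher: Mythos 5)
Your proof is correct and is essentially the paper's approach, unwound: the paper simply applies Lemma~\ref{lem:5.1} with $[\amf_k,\bmf_k]$ in place of $[\amf_0,\bmf_0]$ (the inductive containment hypothesis supplying invariance of the smaller interval, i.e.\ the analogue of (H4)(i)), whereas you re-derive that same argument step by step.
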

\begin{proof}
Apply Lemma~\ref{lem:5.1} with $[\amf_k,\bmf_k]$ taking the place of $[\amf_0,\bmf_0]$ and
$\theta_{\Gamma}(\bmf_k) = \amf_{k+1}$ taking the place of $\amf_1$ and
$\theta_{\gamma}(\amf_k) = \bmf_{k+1}$ taking the place of $\bmf_1$.
\end{proof}

It follows from Lemma~\ref{lem:5.2} that if (H4) holds and if we
inductively define sequences $\amf_k$ and $\bmf_k$ by
$\amf_{k+1} = \theta_{\Gamma}(\bmf_k)$ and $\bmf_{k+1} = \theta_{\gamma}(\amf_k)$ for $k \ge 0$,
then for all integers $k \ge 0$, $\amf_k < \bmf_k$, $\amf_{k+1} \ge \amf_k$, $\bmf_{k+1} \le \bmf_k$,
and $\theta_{\beta}([\amf_k,\bmf_k]) \subset [\amf_{k+1},\bmf_{k+1}]$ for all $\beta \in \B$.
It follows that $\lim_{k \rightarrow \infty} \amf_k:= \amf_{\infty}$ and
$\lim_{k \rightarrow \infty} \bmf_k:= \bmf_{\infty}$ both exist.

\begin{lem}
\label{lem:5.3}
Assume (H4) and let notation be as above.  Then
$\theta_{\beta}([\amf_{\infty},\bmf_{\infty}]) \subset
[\amf_{\infty},\bmf_{\infty}]$ for all $\beta \in \B$ and
$\theta_{\gamma}(\amf_{\infty}) = \bmf_{\infty}$ and
$\theta_{\Gamma}(\bmf_{\infty}) = \amf_{\infty}$, so $\theta_{\Gamma} \circ
\theta_{\gamma}(\amf_{\infty}) = \amf_{\infty}$ and $\theta_{\gamma} \circ
\theta_{\Gamma}(\bmf_{\infty}) = \bmf_{\infty}$.  If $\beta_1, \beta_2,
\ldots, \beta_k$ are elements of $\B$ and $x \in [\amf_0,\bmf_0]$, then
$(\theta_{\beta_1} \circ \theta_{\beta_2} \circ \cdots \circ
\theta_{\beta_k})(x) \in [\amf_k, \bmf_k]$. If $(\theta_{\beta_1} \circ
\theta_{\beta_2} \circ \cdots \circ \theta_{\beta_k})(x) = x$ for some
$x \in [\amf_0,\bmf_0]$ and some elements $\beta_1, \beta_2, \ldots,
\beta_k$ of $\B$, then $x \in [\amf_{\infty}, \bmf_{\infty}]$.
\end{lem}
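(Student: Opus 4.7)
The plan is to establish the five assertions in the order: first the identities for $\amf_\infty$ and $\bmf_\infty$, then the invariance of $[\amf_\infty,\bmf_\infty]$, then the iteration claim, and finally the fixed-point statement.

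To begin, I would pass to the limit in the defining recursions $\amf_{k+1}=\theta_\Gamma(\bmf_k)$ and $\bmf_{k+1}=\theta_\gamma(\amf_k)$. Because $\theta_\gamma$ and $\theta_\Gamma$ are continuous by (H4)(i) and the sequences $\amf_k,\bmf_k$ are monotone and bounded (hence convergent to $\amf_\infty,\bmf_\infty$), continuity immediately yields $\theta_\gamma(\amf_\infty)=\bmf_\infty$ and $\theta_\Gamma(\bmf_\infty)=\amf_\infty$. Composing these identities then gives $\theta_\Gamma\circ\theta_\gamma(\amf_\infty)=\amf_\infty$ and $\theta_\gamma\circ\theta_\Gamma(\bmf_\infty)=\bmf_\infty$, settling the second and third assertions.

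Next, for the invariance claim, I would fix $\beta\in\B$ and $x\in[\amf_\infty,\bmf_\infty]$. By (H4)(ii) I have $\theta_\Gamma(x)\le\theta_\beta(x)\le\theta_\gamma(x)$, and (H4)(iii) gives that both $\theta_\gamma$ and $\theta_\Gamma$ are strictly decreasing, so $\theta_\gamma(x)\le\theta_\gamma(\amf_\infty)=\bmf_\infty$ and $\theta_\Gamma(x)\ge\theta_\Gamma(\bmf_\infty)=\amf_\infty$. Combining these inequalities shows $\theta_\beta(x)\in[\amf_\infty,\bmf_\infty]$, which is the first assertion.

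The fourth assertion I would prove by induction on $k$, reading the composition from the inside out. Setting $y_0=x\in[\amf_0,\bmf_0]$ and $y_j=\theta_{\beta_{k-j+1}}(y_{j-1})$, Lemma~\ref{lem:5.1} gives $y_1\in[\amf_1,\bmf_1]$, and Lemma~\ref{lem:5.2}, applied inductively, gives $y_j\in[\amf_j,\bmf_j]$ for all $1\le j\le k$; taking $j=k$ yields $(\theta_{\beta_1}\circ\cdots\circ\theta_{\beta_k})(x)\in[\amf_k,\bmf_k]$ as required. For the fifth assertion, suppose $\Theta:=\theta_{\beta_1}\circ\cdots\circ\theta_{\beta_k}$ fixes $x\in[\amf_0,\bmf_0]$. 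Then $\Theta^n(x)=x$ for every $n\ge1$, and $\Theta^n$ is itself a composition of $nk$ maps from $\{\theta_\beta:\beta\in\B\}$, so the fourth assertion (with $nk$ in place of $k$) gives $x\in[\amf_{nk},\bmf_{nk}]$. Letting $n\to\infty$ and using $\amf_k\uparrow\amf_\infty$, $\bmf_k\downarrow\bmf_\infty$ produces $x\in[\amf_\infty,\bmf_\infty]$.

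The individual steps are all routine once one sets things up correctly; the only mildly subtle point is the fifth assertion, where one must recognize that the periodic fixed-point relation for $\Theta$ allows one to obtain nested containments $x\in[\amf_{nk},\bmf_{nk}]$ for arbitrarily large $n$ and then pass to the limit. Everything else is direct bookkeeping with the monotonicity supplied by (H4)(iii) and the continuity supplied by (H4)(i).
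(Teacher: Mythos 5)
Your proposal is correct and follows essentially the same route as the paper: pass to the limit in the recursions using continuity from (H4)(i) to obtain the identities for $\amf_\infty$, $\bmf_\infty$; use Lemmas~\ref{lem:5.1} and \ref{lem:5.2} inductively for the composition containment; and then iterate the fixed-point relation to obtain $x\in[\amf_{nk},\bmf_{nk}]$ for all $n$ and pass to the limit. The only difference is that you spell out the invariance of $[\amf_\infty,\bmf_\infty]$ directly from (H4)(ii)--(iii) and the limit identities, whereas the paper leaves this implicit (it follows by applying Lemma~\ref{lem:5.1} with $[\amf_\infty,\bmf_\infty]$ in place of $[\amf_0,\bmf_0]$); this is a small expository addition, not a genuinely different method.
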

\begin{proof}
Because $\lim_{k \rightarrow \infty} \amf_k:= \amf_{\infty}$,
$\lim_{k \rightarrow \infty} \bmf_k:= \bmf_{\infty}$, $\theta_{\gamma}(\amf_k) = \bmf_{k+1}$, and
$\theta_{\Gamma}(\bmf_k) = \amf_{k+1}$, it follows from the continuity of
$\theta_{\gamma}$ and $\theta_{\Gamma}$ that $\theta_{\gamma}(\amf_{\infty}) = \bmf_{\infty}$
and $\theta_{\Gamma}(\bmf_{\infty}) = \amf_{\infty}$.

If $x \in [\amf_0,\bmf_0]$ and $\beta_1, \beta_2, \ldots,
\beta_k$ are elements of $\B$, repeated applications of Lemma~\ref{lem:5.1}
show that $\theta_{\beta_k}(x) \in [\amf_1,\bmf_1]$, $(\theta_{\beta_{k-1}} \circ 
\theta_{\beta_k})(x) \in [\amf_2,\bmf_2]$, and generally that
$(\theta_{\beta_{1}} \circ  \theta_{\beta_{2}} \circ \cdots \circ
\theta_{\beta_k})(x) \in [\amf_k,\bmf_k]$. If  $x \in [\amf_0,\bmf_0]$ and 
$\beta_1, \beta_2, \ldots, \beta_k$ are elements of $\B$ are such that
$(\theta_{\beta_{1}} \circ  \theta_{\beta_{2}} \circ \cdots \circ
\theta_{\beta_k})(x) = x$, it follows that $x \in [\amf_k,\bmf_k]$.  Now the same
argument can be repeated to show that $x \in [\amf_{2k},\bmf_{2k}]$ and
generally that  $x \in [\amf_{mk},\bmf_{mk}]$ for every positive integer $m$. Since
$\cap_{m \ge 1}[\amf_{mk},\bmf_{mk}] = [\amf_{\infty}, \bmf_{\infty}]$, we conclude that
$x \in [\amf_{\infty},\bmf_{\infty}]$.
\end{proof}

\begin{remark}
Under the hypotheses of Lemma~\ref{lem:5.3}, if 
$(\theta_{\Gamma} \circ \theta_{\gamma})(x) = x$ or
$(\theta_{\gamma} \circ \theta_{\Gamma})(x) = x$ for some $x \in [\amf_0,\bmf_0]$, then
$\amf_{\infty} \le x \le \bmf_{\infty}$, so $\amf_{\infty}$ is the least fixed point
of $\theta_{\Gamma} \circ \theta_{\gamma}$ in $[\amf_0,\bmf_0]$ and 
$\bmf_{\infty}$ is the greatest fixed point
of $\theta_{\gamma} \circ \theta_{\Gamma}$ in $[\amf_0,\bmf_0]$ .
\end{remark}

Lemma~\ref{lem:5.3} provides a way of obtaining invariant intervals
$J$, such that $\theta_{\beta}(J) \subset J$ for all $\beta \in \B$.
However, it is frequently the case that we have more information than
given in (H4), and then one can give more flexible methods to find
invariant intervals.  The following lemma, whose proof we omit,
describes a commonly occurring class of examples where such methods
are available.
\begin{lem}
\label{lem:7.3'}
Let hypotheses and notation be as in Lemma~\ref{lem:5.3}.  Suppose also
that there exist intervals $J_1 =[x_1, \amf_{\infty}]$ and $J_2 =
[\bmf_{\infty},x_2]$
with $\amf_0 \le x_1 < \amf_{\infty}$ and $\bmf_{\infty} < x_2 < \bmf_0$, 
such that
$\theta_{\gamma}(J_2) \subset [\amf_{\infty}, \bmf_{\infty}]$, 
$\theta_{\Gamma}(J_1) \subset [\amf_{\infty}, \bmf_{\infty}]$, 
$\lip(\theta_{\gamma}|_{J_1}) \le c_1$, $\lip(\theta_{\Gamma}|_{J_2}) \le
c_2$, and $c_1 c_2 <1$.  If $\xi_1 \in J_1$ is chosen so that $\xi_2 =
\theta_{\gamma}(\xi_1) \in J_2$, then
$\theta_{\beta}([\xi_1,\xi_2]) \subset [\xi_1,\xi_2]$ for all $\beta \in \B$.
Similarly, if $\eta_2 \in J_2$ is chosen so that $\eta_1 =
\theta_{\Gamma}(\eta_2) \in J_1$, then
$\theta_{\beta}([\eta_1,\eta_2]) \subset [\eta_1,\eta_2]$ for all $\beta \in \B$.
\end{lem}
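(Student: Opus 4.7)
The plan is to verify $\theta_\beta([\xi_1,\xi_2]) \subset [\xi_1,\xi_2]$ by splitting the candidate interval at the two endpoints of the already-invariant core $[\amf_\infty,\bmf_\infty]$. Observe first that $\xi_1 \le \amf_\infty$ (because $\xi_1 \in J_1$) and $\xi_2 \ge \bmf_\infty$ (because $\xi_2 \in J_2$), so
\begin{equation*}
[\xi_1,\xi_2] = [\xi_1,\amf_\infty] \cup [\amf_\infty,\bmf_\infty] \cup [\bmf_\infty,\xi_2],
\end{equation*}
with $[\xi_1,\amf_\infty] \subset J_1$ and, since $\xi_2 \le x_2$, also $[\bmf_\infty,\xi_2] \subset J_2$. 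For any $\beta \in \B$, I will bound $\theta_\beta(x)$ on each piece via the sandwich $\theta_\Gamma(x) \le \theta_\beta(x) \le \theta_\gamma(x)$ supplied by property (ii) of (H4).

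The middle piece is handled immediately by Lemma~\ref{lem:5.3}. On the left piece $[\xi_1,\amf_\infty] \subset J_1$, monotonicity (property (iii)) gives $\theta_\gamma(x) \le \theta_\gamma(\xi_1) = \xi_2$, and the hypothesis $\theta_\Gamma(J_1) \subset [\amf_\infty,\bmf_\infty]$ gives $\theta_\Gamma(x) \ge \amf_\infty \ge \xi_1$. On the right piece $[\bmf_\infty,\xi_2] \subset J_2$, the hypothesis $\theta_\gamma(J_2) \subset [\amf_\infty,\bmf_\infty]$ gives $\theta_\gamma(x) \le \bmf_\infty \le \xi_2$, which supplies the upper bound.

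The crux is the lower bound on the right piece, $\theta_\beta(x) \ge \xi_1$, which reduces by monotonicity of $\theta_\Gamma$ to the single inequality $\theta_\Gamma(\xi_2) \ge \xi_1$; this is where the contraction hypothesis $c_1 c_2 < 1$ is essential. Using the identities $\theta_\gamma(\amf_\infty) = \bmf_\infty$ and $\theta_\Gamma(\bmf_\infty) = \amf_\infty$ from Lemma~\ref{lem:5.3}, together with the Lipschitz bounds on $J_1$ and $J_2$, I compute
\begin{equation*}
|\theta_\Gamma(\xi_2) - \amf_\infty|
= |\theta_\Gamma(\theta_\gamma(\xi_1)) - \theta_\Gamma(\theta_\gamma(\amf_\infty))|
\le c_2\,|\theta_\gamma(\xi_1) - \theta_\gamma(\amf_\infty)|
\le c_1 c_2\,|\xi_1 - \amf_\infty|.
\end{equation*}
Since $\xi_2 \ge \bmf_\infty$ and $\theta_\Gamma$ is decreasing, $\theta_\Gamma(\xi_2) \le \amf_\infty$; combining this with the displayed estimate yields $\theta_\Gamma(\xi_2) \ge \amf_\infty - c_1 c_2(\amf_\infty - \xi_1) > \xi_1$, because $c_1 c_2 < 1$ and $\xi_1 < \amf_\infty$.

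The second half of the lemma is proved by a mirror-image argument: decompose $[\eta_1,\eta_2]$ at $\amf_\infty$ and $\bmf_\infty$, dispatch the middle piece by Lemma~\ref{lem:5.3}, and dispatch the two easy bounds on the outer pieces by monotonicity together with $\theta_\Gamma(J_1) \subset [\amf_\infty,\bmf_\infty]$ (for the lower bound on the left piece) and $\theta_\gamma(J_2) \subset [\amf_\infty,\bmf_\infty]$ (for the upper bound on the right piece). The remaining inequality $\theta_\gamma(\eta_1) \le \eta_2$ on the left piece is obtained from the analogous composition estimate $|\theta_\gamma(\eta_1) - \bmf_\infty| \le c_1 c_2 |\eta_2 - \bmf_\infty|$ applied to $\theta_\gamma \circ \theta_\Gamma$ at $\bmf_\infty$, again relying crucially on $c_1 c_2 < 1$.
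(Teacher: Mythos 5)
Your proof is correct. Note that the paper explicitly omits its own proof of this lemma (``The following lemma, whose proof we omit, \ldots''), so there is no argument in the paper to compare yours against; what you have written is the natural argument, decomposing $[\xi_1,\xi_2]$ at the endpoints of the invariant core, dispatching the two outer pieces via the monotone sandwich from (H4)(ii)--(iii) together with the hypotheses $\theta_\Gamma(J_1)\subset[\amf_\infty,\bmf_\infty]$ and $\theta_\gamma(J_2)\subset[\amf_\infty,\bmf_\infty]$, and isolating the one nontrivial inequality ($\theta_\Gamma(\xi_2)\ge\xi_1$, resp.\ $\theta_\gamma(\eta_1)\le\eta_2$) which is where $c_1c_2<1$ enters via the composition estimate anchored at the fixed-point identities $\theta_\gamma(\amf_\infty)=\bmf_\infty$, $\theta_\Gamma(\bmf_\infty)=\amf_\infty$ from Lemma~\ref{lem:5.3}. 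One inessential slip: in the final step you assert the strict inequality $\amf_\infty - c_1c_2(\amf_\infty-\xi_1) > \xi_1$ citing $\xi_1<\amf_\infty$, but the hypothesis only gives $\xi_1\in J_1=[x_1,\amf_\infty]$, so $\xi_1=\amf_\infty$ is permitted; in that degenerate case the inequality is an equality, which is all the lemma requires.
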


We shall use $\B$ as an index set, so the operator $L_s$ can be written
\begin{equation*}
(L_s f)(x) = \sum_{\beta \in \B} g_{\beta}(x)^s f(\theta_{\beta}(x)),
\end{equation*}
where $\theta_{\beta}(S_0) \subset S_0$ for all $\beta \in \B$ and $S_0 =
[\amf_0,\bmf_0]$. If the conditions of Theorem~\ref{thm:2.4} are satisfied,
$L_s$ has a strictly positive, $C^m$ eigenfunction. Assuming (H4) and the
hypotheses of Theorem~\ref{thm:2.4}, the observation in the first paragraph of
this section implies that to compute $\rmf(L_s)$, we can, in the notation of
Lemma~\ref{lem:5.3}, replace $[\amf_0,\bmf_0]$ by
$[\amf_{\infty},\bmf_{\infty}]$ or by $[\amf_k,\bmf_k]$ for any integer $k \ge
1$. In fact, we could use any interval $J \subset [\amf_0,\bmf_0]$ with
$\theta_{\beta}(J) \subset J$ for all $\beta \in \B$ (compare
Lemma~\ref{lem:7.3'}).

For the remainder of this section, we shall assume

(H5) $\B$ is a finite set of distinct real numbers and
$\gamma= \min \{\beta: \beta \in \B\} \ge 1$.  For every $\beta \in \B$, we
define $\theta_{\beta}:[0,1/\gamma]:=[\amf_0,\bmf_0] \to [0,1/\gamma]$ 
by $\theta_{\beta}(x)  = (x+ \beta)^{-1}$.

We shall write $\Gamma = \max \{\beta: \beta \in \B\}$ and $\gamma =
\min \{\beta: \beta \in \B\}$ and always assume that $\gamma <
\Gamma$.  The reader can check that $\{\theta_{\beta}: \beta \in \B\}$
satisfies the conditions of (H4) with $\theta_{\gamma}(x) = (x +
\gamma)^{-1}$ and $\theta_{\Gamma}(x) = (x + \Gamma)^{-1}$. Using the
calculations in the following paragraph, the reader can check that the
conditions of Lemma~\ref{lem:7.3'} are also satisfied.

We assume that the sequences $\{\amf_k: k \ge 1\}$ and $\{\bmf_k: k \ge 1\}$
are defined as in Lemmas~\ref{lem:5.2} and \ref{lem:5.3}, with $\amf_0 =0$ and
$\bmf_0 =1/\gamma$, and $\amf_{\infty}$ and $\bmf_{\infty}$ defined as in
Lemma~\ref{lem:5.3}.  Since $\amf_{\infty}$ is a fixed point of
$\theta_{\Gamma} \circ \theta_{\gamma}$ in $[0,1/\gamma]$ and $\bmf_{\infty}$ is a
fixed point of $\theta_{\gamma} \circ \theta_{\Gamma}$ in $[0,1/\gamma]$, one can
easily solve the equations
\begin{equation*}
 x= (\theta_{\Gamma} \circ \theta_{\gamma})(x) = \frac{x + \gamma}{\Gamma x +
   (1+ \Gamma \gamma)} \quad \text{and}
\quad x = (\theta_{\gamma} \circ \theta_{\Gamma})(x)
= \frac{x + \Gamma}{\gamma x +    (1+ \Gamma \gamma)}
\end{equation*}
to obtain
\begin{equation}
\label{5.1}
\amf_{\infty} = - \frac{\gamma}{2} + \sqrt{(\gamma/2)^2 + (\gamma/\Gamma)}
\quad \text{and} \quad
\bmf_{\infty} = - \frac{\Gamma}{2} + \sqrt{(\Gamma/2)^2 + (\Gamma/\gamma)}.
\end{equation}
One can verify that $\bmf_{\infty} = (\Gamma/\gamma) \amf_{\infty}$, so
$0 < \amf_{\infty} < \bmf_{\infty} <1/\gamma$.

Since our index set is $\B$, we slightly abuse previous notation and, for
a positive integer $\nu$, we define the set of ordered $\nu$-tuples of
elements of $\B$ by
\begin{equation*}
\Omega_{\nu} = \{(\beta_1, \beta_2, \cdots, \beta_{\nu}): \beta_j \in \B
\text{ for } 1 \le j \le \nu\}.
\end{equation*}
For each $\omega = (\beta_1, \beta_2, \cdots, \beta_{\nu}) \in \Omega_{\nu}$,
we define $\theta_{\omega} = \theta_{\beta_1} \circ \theta_{\beta_2} \circ
\cdots \circ \theta_{\beta_{\nu}}$. Our first task is to estimate $c(\nu)$
(see \eqref{3.4}), which gives an upper bound for $\lip(\theta_{\omega})$,
$\omega \in \Omega_{\nu}$.

If $\omega = (\beta_1, \beta_2, \cdots, \beta_{\nu}) \in \Omega_{\nu}$ and 
$\beta \in \B$, define a matrix
\begin{equation*}
M_{\beta} = \begin{pmatrix} 0 & 1 \\ 1 & \beta \end{pmatrix}.
\end{equation*}
It is proved in Section 6 of \cite{hdcomp1} that
\begin{equation*}
M = M_{\beta_1} M_{\beta_2} \cdots M_{\beta_{\nu}} 
= \begin{pmatrix} A_{\nu-1} & A_{\nu} \\ B_{\nu-1} & B_{\nu} \end{pmatrix},
\end{equation*}
where $A_j$ and $B_j$ are defined inductively by $A_0 =0$, $A_1 =1$,
$B_0 =1 $, $B_1 = \beta_1$, and generally, for $1 \le j \le \nu$, by
\begin{equation}
\label{5.2}
A_{j+1} = A_{j-1} + \beta_{j+1} A_j, \qquad 
B_{j+1} = B_{j-1} + \beta_{j+1} B_j.
\end{equation}
Note that $\det(M_{\beta}) = -1$ so $\det(M) = (-1)^{\nu}$.  Standard results
for M\"obius transforms now imply that for $x \in [\amf_k,\bmf_k]$, 
$0 \le k \le \infty$,
\begin{align}
\nonumber %\label{thetabetaest1}
(\theta_{\beta_1} \circ \theta_{\beta_2} \circ \cdots \circ
\theta_{\beta_{\nu}})(x)  &= \frac{A_{\nu-1} x + A_{\nu}}{B_{\nu-1} x +
  B_{\nu}},
\\
\label{thetabetaest2}
\frac{d}{dx}(\theta_{\beta_1} \circ \theta_{\beta_2} \circ \cdots \circ
\theta_{\beta_{\nu}})(x)  &= \frac{(-1)^{\nu}}{(B_{\nu-1} x +  B_{\nu})^2}.
\end{align}
If we define $\tilde B_0 = 1$, $\tilde B_1 = \gamma$, and
$\tilde B_{j+1} = \tilde B_{j-1} + \gamma \tilde B_j$ for $j \ge 1$, then
because $\gamma \le \beta$ for all $\beta \in \B$, it is straightforward
to prove that $\tilde B_j \le B_j$ for $0 \le j \le \nu$, where $B_j$ is
defined by \eqref{5.2}. It follows that for all $\omega \in \Omega_{\nu}$
and $x \in [\amf_k, \bmf_k]$,
\begin{equation*}
%\label{5.3}
|\theta_{\omega}^{\prime}(x)| \le [\tilde B_{\nu-1} \amf_k + \tilde
B_{\nu}]^{-2},
\end{equation*}
which implies that, for $\theta_{\omega}:[\amf_k, \bmf_k] \to \R$,
\begin{equation}
\label{5.4}
\max \{\lip(\theta_{\omega}): \omega \in \Omega_{\nu}\}
= [\tilde B_{\nu-1} \amf_k + \tilde B_{\nu}]^{-2}:= c(\nu).
\end{equation}

It remains to give an exact formula for the right hand side of \eqref{5.4}.
The linear difference equation 
$\tilde B_{j+1} = \tilde B_{j-1} + \gamma \tilde B_j$  has solutions of
the form $\lambda^j$ for $j \ge 0$, which leads to the formula
$\lambda^{n+1} = \lambda^{n-1} + \gamma \lambda^n$, or for $\lambda \neq 0$,
$\lambda^2 = 1 + \lambda \gamma$. Hence
\begin{equation}
\label{5.5}
\lambda = \lambda_{+} = \frac{\gamma}{2} + \frac{1}{2}\sqrt{\gamma^2 +4},
\qquad
\lambda = \lambda_{-} = \frac{\gamma}{2} - \frac{1}{2}\sqrt{\gamma^2 +4}.
\end{equation}
The general solution of the difference equation is then
\begin{equation}
\label{5.6}
c_1 \lambda_{+}^j + c_2 \lambda_{-}^j = \tilde B_j, \quad j \ge 0,
\end{equation}
where $c_1$ and $c_2$ must be chosen so that $\tilde B_0 =1$ and 
$\tilde B_1 = \gamma$. A calculation gives
\begin{equation}
\label{5.7}
c_1 = \frac{\sqrt{\gamma^2 + 4} + \gamma}{2\sqrt{\gamma^2 + 4}},
\qquad
c_2 = \frac{\sqrt{\gamma^2 + 4} - \gamma}{2\sqrt{\gamma^2 + 4}}.
\end{equation}

Summarizing the above discussion, we obtain
\begin{lem}
\label{lem:5.4}
Assume (H4) and consider $\theta_{\beta}$, $\beta \in \B$, as a map of
$[\amf_k,\bmf_k]$ to itself, for $0 \le k \le \infty$, where $\amf_0 =0$,
$\bmf_0 =1$, $\amf_k = \theta_{\Gamma}(\bmf_{k-1})$ and $\bmf_k =
\theta_{\gamma}(\amf_{k-1})$ for $k \ge 1$ and $\amf_{\infty}$ and
$\bmf_{\infty}$ are given by \eqref{5.1}. Then for $j \ge 1$, $\tilde B_j$ is
given by \eqref{5.6}, where $\lambda_{+}$ and $\lambda_{-}$ are given by
\eqref{5.5} and $c_1$ and $c_2$ by \eqref{5.7}.
\end{lem}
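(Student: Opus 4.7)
The plan is a direct verification that the proposed closed form satisfies the linear recurrence and initial data defining $\tilde B_j$; since a second-order linear recurrence with two prescribed initial values has a unique solution, this will complete the proof. All of the conclusions about the intervals $[\amf_k, \bmf_k]$ and the limits $\amf_\infty, \bmf_\infty$ are already supplied by Lemmas~\ref{lem:5.1}, \ref{lem:5.2}, and \ref{lem:5.3} together with the explicit fixed-point equations that gave \eqref{5.1}, so only the formula for $\tilde B_j$ itself needs to be checked.

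First, I would observe that $\lambda_+$ and $\lambda_-$ as given in \eqref{5.5} are exactly the two roots of the characteristic polynomial $\lambda^2 - \gamma \lambda - 1 = 0$ associated with the recurrence $\tilde B_{j+1} = \gamma \tilde B_j + \tilde B_{j-1}$. Since $\gamma > 0$, the discriminant $\gamma^2 + 4$ is strictly positive and $\lambda_+ \ne \lambda_-$, so the general solution of the recurrence is $\tilde B_j = c_1 \lambda_+^j + c_2 \lambda_-^j$ for some uniquely determined constants $c_1, c_2$.

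Next, I would determine $c_1$ and $c_2$ by matching the two initial conditions $\tilde B_0 = 1$ and $\tilde B_1 = \gamma$, which yields the $2 \times 2$ linear system
\begin{equation*}
c_1 + c_2 = 1, \qquad c_1 \lambda_+ + c_2 \lambda_- = \gamma.
\end{equation*}
Using $\lambda_+ - \lambda_- = \sqrt{\gamma^2+4}$, the system gives $c_1 = (\gamma - \lambda_-)/\sqrt{\gamma^2+4}$. A short simplification using $\gamma - \lambda_- = (\gamma + \sqrt{\gamma^2+4})/2$ produces exactly the expression for $c_1$ in \eqref{5.7}, and then $c_2 = 1 - c_1$ simplifies to the expression for $c_2$ in \eqref{5.7}. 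This establishes \eqref{5.6}.

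There is essentially no obstacle: the statement is a consolidation of calculations already done in the paragraphs above. The only point where care is required is the algebraic simplification $c_1 = (\gamma - \lambda_-)/(\lambda_+ - \lambda_-) = (\gamma + \sqrt{\gamma^2+4})/(2\sqrt{\gamma^2+4})$, and the symmetric identity for $c_2$; both are mechanical once the characteristic roots are in hand. The only conceptual check worth flagging is that the closed form is valid for \emph{all} $j \ge 0$ (in particular recovering $\tilde B_0 = 1$ and $\tilde B_1 = \gamma$ exactly), which is automatic from the way $c_1, c_2$ are fixed.
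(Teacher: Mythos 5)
Your proof is correct and follows the paper's own line of reasoning exactly: the paper derives $\lambda_{\pm}$ as roots of the characteristic equation of the recurrence, writes the general solution as $c_1\lambda_+^j + c_2\lambda_-^j$, and states that ``a calculation gives'' \eqref{5.7} from the initial data $\tilde B_0 = 1$, $\tilde B_1 = \gamma$. You have simply supplied the omitted calculation (solving the $2\times 2$ system and simplifying $(\gamma - \lambda_-)/(\lambda_+ - \lambda_-)$), so there is no substantive difference between the two arguments.
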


\begin{remark}
Because $\lambda_{+} >1$ and
$-1 < \lambda_{-} = - 1/\lambda_{+} <0$ for all $\gamma >0$,
$c_1 \lambda_{+}^j$ is the dominant term in  \eqref{5.6} as $j$ increases; and
one can check that $|c_2 \lambda_{-}^j| < 1/2$ for all $j \ge 0$.  Of course,
for moderate values of $j$, one can easily compute $\tilde B_j$  from its
recurrence formula.  It is clear that the constant $c(\nu)$ in 
\eqref{5.4} is minimized by working on the interval $[\amf_{\infty},
\bmf_{\infty}]$.
\end{remark}

Assuming (H5), we now define for $s >0$, $L_s:C([0,1]) \to C([0,1])$ by
\begin{equation*}
%\label{5.8}
(L_sf)(x) = \sum_{\beta \in \B} |\theta_{\beta}^{\prime}(x)|^s 
f(\theta_{\beta}(x)):= 
\sum_{\beta \in \B} g_{\beta}(x)^s  f(\theta_{\beta}(x)).
\end{equation*}
It is well-known that for $\nu$ a positive integer,
\begin{equation*}
%\label{5.9}
(L_s^{\nu} f)(x) = \sum_{\beta \in \B} |\theta_{\omega}^{\prime}(x)|^s 
f(\theta_{\omega}(x)):= 
\sum_{\omega \in \Omega_{\nu}} g_{\omega}(x)^s  f(\theta_{\omega}(x)).
\end{equation*}
Because $\theta_{\omega}([\amf_k,\bmf_k]) \subset [\amf_k,\bmf_k]$ for $0 \le k \le \infty$
and $\omega \in \Omega_{\nu}$, we can also consider $L_s^{\nu}$ as a map
of $C([\amf_k,\bmf_k]) \mapsto C([\amf_k,\bmf_k])$ and as noted earlier, this does
not change the spectral radius of $L_s^{\nu}$.  Thus, we shall consider
$L_s^{\nu}$ as a map from $C([\amf_k,\bmf_k])$ into itself, with optimal results
obtained by taking $k = \infty$.

We need to find a constant $M_0(\nu)$ (compare \eqref{3.4}) such that
for all $\omega \in \Omega_{\nu}$, $g_{\omega}(x): =
|\theta_{\omega}^{\prime}(x)| \in K(M_0(\nu); [\amf_k,\bmf_k])$. In
this case, this is equivalent to proving that for all $\omega \in
\Omega_{\nu}$, $x \mapsto \ln (|\theta_{\omega}^{\prime}(x)|)$ is a
Lipschitz map on $[\amf_k,\bmf_k]$ with Lipschitz constant $\le
M_0(\nu)$. If $\omega = (\beta_1, \beta_2, \ldots, \beta_{\nu})$, we
have by \eqref{thetabetaest2}, that
\begin{equation*}
|\theta_{\omega}^{\prime}(x)| = \frac{1}{(B_{\nu-1} x + B_{\nu})^2},
\end{equation*}
so
\begin{equation*}
\ln(|\theta_{\omega}^{\prime}(x)|) = -2 \ln(B_{\nu-1} x + B_{\nu}).
\end{equation*}
Thus it suffices to choose $M_0(\nu)$ so that for all $x \in [\amf_k,\bmf_k]$ and
all $\omega \in \Omega_{\nu}$,
\begin{multline*}
\Big|\frac{d}{dx}\ln(|\theta_{\omega}^{\prime}(x)|) \Big|
= 2 \frac{B_{\nu-1}}{B_{\nu -1} x + B_{\nu}}
\\
= 2 \Big[\frac{1}{x + (B_{\nu}/B_{\nu-1})}\Big]
\le 2 \Big[\frac{1}{\amf_k + (B_{\nu}/B_{\nu-1})}\Big] \le M_0(\nu).
\end{multline*}

%Recall that if $\omega = (\beta_1, \beta_2, \ldots, \beta_{\nu})$,
If we define  $x_j = B_{j-1}/B_j$ for $1  \le j < \nu$, then  since $B_{j+1} =
B_{j-1}  + \beta_{j+1}  B_j$ for  $1 \le  j \le  \nu$, we  get $B_{j+1}/B_j  =
B_{j-1}/B_j   +  \beta_{j+1}$   or  $x_{j+1}   =  1/(x_j   +  \beta_{j+1})   =
\theta_{\beta_j +1}(x_j)$  for $1 \le j  \le \nu$. Since $x_1  = 1/\beta_1 \in
[\amf_1,\bmf_1]  =   [1/(\Gamma  +1/\gamma),   1/\gamma]$,  it   follows  from
Lemma~\ref{5.2} that  $x_{j+1} \in  [\amf_{j+1}, \bmf_{j+1}]$ for  $1 \le  j <
\nu$, so $1/x_{j+1} =  B_{j+1}/B_j \in [\bmf_{j+1}^{-1}, \amf_{j+1}^{-1}]$ and
$\B_{\nu}/B_{\nu-1}  \ge \bmf_{\nu}^{-1}$.   It follows  that for  $\omega \in
\Omega_{\nu}$ and $x \in [\amf_k,\bmf_k]$, we can take 
\begin{equation}
\label{M0nuchoice}
M_0(\nu) = 2/(\amf_k + \bmf_{\nu}^{-1}).
\end{equation}
By  adding the exponent  $s$, one easily derives  that for
all $\omega \in \Omega_{\nu}$, $\nu \ge 1$, and $0 \le k \le \infty$,
\begin{equation}
\label{5.10a}
g_{\omega}(\cdot)^s = |\theta_{\omega}^{\prime}(\cdot)|^s \in K(2s/(\amf_k +
  \bmf_{\nu}^{-1}); [\amf_k,\bmf_k]).
\end{equation}
Note that we could replace $\bmf_{\nu}^{-1}$ by $\amf_{\nu-1} + \gamma$.

We summarize the above discussion in the following lemma.
\begin{lem}
\label{lem:5.5}
Assume (H5) and let $\amf_k$ and $\bmf_k$, $0 \le k \le \infty$, be as described
in Lemma~\ref{lem:5.4}. If $\omega \in \Omega_{\nu}$, $\nu \ge 1$, the map
$x \mapsto \ln(|\theta_{\omega}^{\prime}(x)|^s)$, $x \in [\amf_k,\bmf_k]$ is
Lipschitz with Lipschitz constant $\le 2s/(\amf_k +   \bmf_{\nu}^{-1})$,
so \eqref{5.10a} is satisfied.
\end{lem}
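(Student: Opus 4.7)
The plan is to differentiate $\ln(|\theta_\omega'(x)|^s)$ directly and bound its derivative uniformly in $x \in [\amf_k, \bmf_k]$, then invoke the mean value theorem. Since the statement already follows from the discussion preceding it, the proof is essentially an organized reassembly: all the essential ingredients (the formula \eqref{thetabetaest2}, the recursion \eqref{5.2}, and the invariance of $[\amf_j,\bmf_j]$ under the $\theta_\beta$'s from Lemmas~\ref{lem:5.1}--\ref{lem:5.3}) are in place.

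First I would write, for $\omega = (\beta_1, \ldots, \beta_\nu) \in \Omega_\nu$, the identity
\begin{equation*}
\ln(|\theta_\omega'(x)|^s) = -2s \ln(B_{\nu-1} x + B_\nu),
\end{equation*}
which comes immediately from \eqref{thetabetaest2} and the fact that $B_{\nu-1} x + B_\nu > 0$ on $[\amf_k,\bmf_k]$. Differentiating in $x$ gives
\begin{equation*}
\left|\frac{d}{dx} \ln(|\theta_\omega'(x)|^s)\right| = \frac{2s \, B_{\nu-1}}{B_{\nu-1}x + B_\nu} = \frac{2s}{x + B_\nu/B_{\nu-1}}.
\end{equation*}
Since $x \ge \amf_k$, it suffices to show $B_\nu/B_{\nu-1} \ge \bmf_\nu^{-1}$ uniformly in the choice of $\omega$.

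The key step is this last inequality. Following the observation in the preceding paragraph, I would set $x_j = B_{j-1}/B_j$ and use the recursion \eqref{5.2} to get the identity $x_{j+1} = 1/(x_j + \beta_{j+1}) = \theta_{\beta_{j+1}}(x_j)$. Starting from $x_1 = 1/\beta_1 \in [1/(\Gamma + 1/\gamma), 1/\gamma] \subset [\amf_1, \bmf_1]$ (recalling $\amf_1 = \theta_\Gamma(\bmf_0) = 1/(\Gamma + 1/\gamma)$ and $\bmf_1 = \theta_\gamma(\amf_0) = 1/\gamma$ when $\amf_0 = 0$, $\bmf_0 = 1/\gamma$), Lemma~\ref{lem:5.2} applied inductively yields $x_{j+1} \in [\amf_{j+1}, \bmf_{j+1}]$ for $1 \le j < \nu$. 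In particular $x_\nu \le \bmf_\nu$, so $B_\nu/B_{\nu-1} = 1/x_\nu \ge 1/\bmf_\nu$, as desired.

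Combining the two bounds gives
\begin{equation*}
\left|\frac{d}{dx} \ln(|\theta_\omega'(x)|^s)\right| \le \frac{2s}{\amf_k + \bmf_\nu^{-1}},
\end{equation*}
uniformly for $x \in [\amf_k, \bmf_k]$ and $\omega \in \Omega_\nu$; the mean value theorem then delivers the claimed Lipschitz estimate, which is precisely the statement that $g_\omega^s \in K(2s/(\amf_k + \bmf_\nu^{-1}); [\amf_k,\bmf_k])$, i.e.\ \eqref{5.10a}. The main (and only) subtle point is the inductive identification of $x_j$ with an orbit of the IFS, which is what lets us apply the invariance results of Section~\ref{sec:est-constants}; everything else is a one-line computation.
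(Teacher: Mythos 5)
Your proof is correct and follows essentially the same route as the paper: differentiate $\ln(|\theta_\omega'(x)|^s) = -2s\ln(B_{\nu-1}x + B_\nu)$, bound the derivative by showing $B_\nu/B_{\nu-1} \ge \bmf_\nu^{-1}$ via the orbit $x_{j+1} = \theta_{\beta_{j+1}}(x_j)$ and the nesting property from Lemma~\ref{lem:5.2}, and conclude with the mean value theorem. You've correctly identified the inductive identification of $B_{j-1}/B_j$ with an IFS orbit as the one non-trivial step, which is exactly the observation the paper exploits in the discussion preceding Lemma~\ref{lem:5.5}.
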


It remains to estimate, for $0 \le k \le \infty$,
$\max_{x \in [\amf_k,\bmf_k]} |(d^j v_s/dx^j)(x)|/v_s(x)$, where $v_s$ denotes the unique
(to within normalization) strictly positive eigenfunction of $L_s$.  The basic
idea is to exploit \eqref{2.15}, as was done in Section 6 of \cite{hdcomp1},
but our results will refine those in \cite{hdcomp1}.

Our previous calculations show that for $x \in [\amf_k,\bmf_k]$, $0 \le k \le
\infty$, we have
\begin{equation*}
g_{\omega}(x)^s = |\theta_{\omega}^{\prime}(x)|^s 
= \frac{1}{B_{\nu -1}^{2s}( x + B_{\nu}/B_{\nu-1})^{2s}}.
\end{equation*}
It follows that for $j \ge 1$, and letting $D$ denote $d/dx$, we have
\begin{equation}
\label{5.10}
\frac{(-1)^j (D^j[(g_{\omega})^s])(x)}{g_{\omega}(x)}
= \frac{(2s)(2s+1) \cdots (2s+j-1)}{( x + B_{\nu}/B_{\nu-1})^j}.
\end{equation}
The same argument used in Lemma~\ref{lem:5.5} shows that
\begin{equation}
\label{5.11}
\bmf_{\nu}^{-1} \le B_{\nu}/B_{\nu-1} \le \amf_{\nu}^{-1},
\end{equation}
so if  $x \in [\amf_k,\bmf_k]$, we derive from \eqref{5.10} and \eqref{5.11} that
\begin{multline}
\label{5.12}
\frac{(2s)(2s+1) \cdots (2s+j-1)}{[\bmf_k + \amf_{\nu}^{-1}]^j}
\le \frac{(-1)^j (D^j[(g_{\omega})^s])(x)}{g_{\omega}(x)^s}
\\
\le \frac{(2s)(2s+1) \cdots (2s+j-1)}{(\amf_k + \bmf_{\nu}^{-1})^j}.
\end{multline}
It follows from \eqref{5.12} that for $x \in [\amf_k,\bmf_k]$,
\begin{multline}
\label{5.13}
\frac{(2s)(2s+1) \cdots (2s+j-1)}{[\bmf_k + \amf_{\nu}^{-1}]^j}
\le \frac{(-1)^j \sum_{\omega \in \Omega_{\nu}} (D^j[(g_{\omega}^s])(x)}
{\sum_{\omega \in \Omega_{\nu}} g_{\omega}(x)^s}
\\
\le \frac{(2s)(2s+1) \cdots (2s+j-1)}{(\amf_k + \bmf_{\nu}^{-1})^j}.
\end{multline}
Taking limits as $\nu \rightarrow \infty$ in \eqref{5.13} and using
\eqref{2.15}, we find that for $x \in [\amf_k,\bmf_k]$,
\begin{multline}
\label{5.14}
\frac{(2s)(2s+1) \cdots (2s+j-1)}{[\bmf_k + \amf_{\infty}^{-1}]^j}
\le \frac{(-1)^j (d^j v_s/dx^j)(x)}{v_s(x)}
\\
\le \frac{(2s)(2s+1) \cdots (2s+j-1)}{(\amf_k + \bmf_{\infty}^{-1})^j}.
\end{multline}
Notice that we can replace $\amf_{\infty}^{-1}$ by $\bmf_{\infty} + \Gamma$ and
$\bmf_{\infty}^{-1}$ by $\amf_{\infty} + \gamma$ in \eqref{5.14}.

As one can easily see, the lower bound in \eqref{5.14} increases as $k$
increases and the upper bound decreases as $k$ increases, so the optimal
bounds are obtained when $k = \infty$ and apply to the interval 
$[\amf_{\infty}, \bmf_{\infty}]$.

We summarize the above results in the following
lemma.
\begin{lem}
\label{lem:5.6} 
Let $v_s$ denote the unique
(to with normalization) strictly positive eigenfunction of $L_s$. Assume (H4)
and let $\amf_k$ and $\bmf_k$, $k \ge 0$ be as in Lemma~\ref{5.4}. Then $v_s$
satisfies \eqref{5.14}.
\end{lem}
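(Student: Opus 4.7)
The plan is to follow the chain of identities \eqref{5.10}--\eqref{5.14} that the author has set up in the preceding paragraphs, and then pass to the limit $\nu \to \infty$ using the convergence result of Remark~\ref{rem:2.5}. Essentially all the ingredients are already at hand; the lemma statement is a packaging of that computation.

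First I would start with the explicit formula for $g_\omega(x)^s = |\theta_\omega'(x)|^s$ obtained from the Möbius transform identity \eqref{thetabetaest2}, namely $g_\omega(x)^s = B_{\nu-1}^{-2s}(x + B_\nu/B_{\nu-1})^{-2s}$. Differentiating $j$ times with respect to $x$ gives the clean formula
\begin{equation*}
\frac{(-1)^j D^j[g_\omega^s](x)}{g_\omega(x)^s}
= \frac{(2s)(2s+1)\cdots(2s+j-1)}{(x + B_\nu/B_{\nu-1})^j},
\end{equation*}
which is \eqref{5.10}. Next I would apply the estimate \eqref{5.11}, $\bmf_\nu^{-1} \le B_\nu/B_{\nu-1} \le \amf_\nu^{-1}$ (a consequence of the same recurrence argument used for $x_j = B_{j-1}/B_j$ in the derivation of \eqref{M0nuchoice}), together with $x \in [\amf_k,\bmf_k]$, to bound the denominator uniformly in $\omega \in \Omega_\nu$ and produce the two-sided estimate \eqref{5.12}.

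Second, I would observe that for each $\omega$ the quantity $(-1)^j D^j[g_\omega^s](x)/g_\omega(x)^s$ is positive (its sign is controlled by the factor $(-1)^j$ in \eqref{thetabetaest2}), so the numerator and denominator in
\begin{equation*}
\frac{(-1)^j \sum_{\omega \in \Omega_\nu} D^j[g_\omega^s](x)}{\sum_{\omega \in \Omega_\nu} g_\omega(x)^s}
\end{equation*}
are a convex combination of bounds of the same sign. Dividing the numerator and the denominator by $\sum_\omega g_\omega^s$ term by term, the uniform upper and lower bounds in \eqref{5.12} carry over intact to the ratio of sums, giving \eqref{5.13}.

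Finally, I would invoke \eqref{2.15} from Remark~\ref{rem:2.5} (applied with $u \equiv 1$), which states that the ratio in the previous display converges to $(-1)^j (d^j v_s/dx^j)(x)/v_s(x)$, uniformly for $x \in S$, as $\nu \to \infty$. Since $\amf_\nu \nearrow \amf_\infty$ and $\bmf_\nu \searrow \bmf_\infty$ by the monotonicity established in Lemma~\ref{lem:5.2}, passing to the limit in \eqref{5.13} yields exactly \eqref{5.14}. There is no real obstacle here; the only point requiring a bit of care is to confirm the sign of $(-1)^j D^j g_\omega^s$ so that term-by-term inequalities survive the summation, and to check that the limit interchange with the uniform bounds is legitimate --- both of which follow immediately from the explicit formula and the uniform convergence asserted in \eqref{2.15}.
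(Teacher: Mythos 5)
Your proposal reproduces the paper's argument essentially verbatim: derive the explicit formula \eqref{5.10} for $(-1)^jD^j[g_\omega^s]/g_\omega^s$, bound the denominator via \eqref{5.11}, pass from the termwise bound \eqref{5.12} to the bound \eqref{5.13} on the ratio of sums, and let $\nu\to\infty$ using the uniform convergence in \eqref{2.15}. The only slip is cosmetic — the positivity of $(-1)^jD^j[g_\omega^s]$ comes from the negative power $(x+B_\nu/B_{\nu-1})^{-2s}$ in $g_\omega^s$, not from the $(-1)^\nu$ in \eqref{thetabetaest2} — but this does not affect the correctness of the argument.
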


\begin{remark}
\label{rem:7.3} 
Since, in Lemma~\ref{lem:5.6}, we have specified the coefficient $g_{\beta}$
  and the maps $\theta_{\beta}$ for $\beta \in \B$, Lemma~\ref{lem:5.6} gives
  us a simple formula for the constant $E(s,p) = E$ in \eqref{6.1}.
\begin{equation*}
%\label{5.15}
\max_{x \in [\amf_k,\bmf_k]} \frac{|(d^p v_s/dx^p)(x)|}{v_s(x)}
\le \frac{(2s)(2s+1) \cdots (2s+p-1)}{(\amf_k + \bmf_{\infty}^{-1})^p} := E,
\end{equation*}
where $p$ and $k$ are positive integers and $ 1 \le k \le \infty$.  Here
we have allowed the interval to vary with $k$, but we may eventually
restrict to $k = \infty$.
\end{remark}

It remains to find a constant $\chi$ (compare \eqref{6.2}) such that
for all $x_1, x_2 \in [\amf_k,\bmf_k]$,
\begin{equation*}
%\label{5.16}
v_s(x_1) \le \exp(2 s|x_1 -x_2|/\chi) v_s(x_2).
\end{equation*}
It follows from \eqref{5.14} that if $\amf_k \le x_1 \le x_2 \le \bmf_k$, then
\begin{equation*}
- \int_{x_1}^{x_2} \frac{v_s^{\prime}(x)}{v_s(x)} \, dx
= \ln(v_s(x_1)) - \ln(v_s(x_2)) \le \frac{2s}{\amf_k + \bmf_{\infty}^{-1}}
|x_2 - x_1|,
\end{equation*}
which implies that 
\begin{equation}
\label{5.17}
v_s(x_1) \le \exp(2 s|x_1 -x_2|/[\amf_k + \bmf_{\infty}^{-1}]) v_s(x_2).
\end{equation}

If $x_2 \le x_1$, we know that $v_s(x_2) \ge v_s(x_1)$, so \eqref{5.17}
is satisfied for all $x_1,x_2 \in [\amf_k,\bmf_k]$. In particular, \eqref{5.17}
is satisfied if the roles of $x_1$ and $x_2$ are reversed, which implies
that $x \mapsto \ln(v_s(x))$ is Lipschitz on $[\amf_k,\bmf_k]$ with
Lipschitz constant $2s/(\amf_k + \bmf_{\infty}^{-1})$.  Summarizing, we have
\begin{lem}
\label{lem:5.7}
If $\chi = \amf_k + \bmf_{\infty}^{-1}$, \eqref{6.2} is satisfied on $[\amf_k,\bmf_k]$.
\end{lem}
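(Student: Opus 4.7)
The plan is to prove the inequality \eqref{6.2} on the subinterval $[\amf_k,\bmf_k]$ by converting the pointwise derivative bound from Lemma~\ref{lem:5.6} into an integrated bound via the fundamental theorem of calculus, and then handling the absolute value by a monotonicity observation.

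First, I would apply Lemma~\ref{lem:5.6} (that is, inequality \eqref{5.14}) with $j=1$. The upper bound there says that, for all $x \in [\amf_k,\bmf_k]$,
\begin{equation*}
\frac{-v_s'(x)}{v_s(x)} \le \frac{2s}{\amf_k + \bmf_{\infty}^{-1}} = \frac{2s}{\chi}.
\end{equation*}
Since this bound is also nonnegative (the lower bound in \eqref{5.14} gives $-v_s'(x)/v_s(x) \ge 0$, so $v_s$ is nonincreasing on $[\amf_k,\bmf_k]$), we have a clean one-sided bound on $(d/dx)\ln v_s(x)$.

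Next, for $\amf_k \le x_1 \le x_2 \le \bmf_k$, I would integrate:
\begin{equation*}
\ln v_s(x_1) - \ln v_s(x_2) = -\int_{x_1}^{x_2} \frac{v_s'(x)}{v_s(x)}\,dx \le \frac{2s}{\chi}(x_2-x_1) = \frac{2s}{\chi}|x_1-x_2|,
\end{equation*}
and then exponentiate to obtain $v_s(x_1) \le v_s(x_2)\exp(2s|x_1-x_2|/\chi)$, which is exactly \eqref{5.17}.

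Finally, I would handle the reversed case $x_2 \le x_1$ by exploiting that $v_s$ is nonincreasing: then $v_s(x_1) \le v_s(x_2) \le v_s(x_2)\exp(2s|x_1-x_2|/\chi)$ trivially, since the exponential factor is at least $1$. Combining the two cases gives \eqref{6.2} with $\chi = \amf_k + \bmf_{\infty}^{-1}$ for all $x_1,x_2 \in [\amf_k,\bmf_k]$, as claimed. No step here looks delicate: the substantive content has already been absorbed into Lemma~\ref{lem:5.6}, and this lemma is essentially a clean packaging of the $j=1$ case followed by a one-line integration, so I do not anticipate any real obstacle.
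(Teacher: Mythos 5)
Your proof is correct and follows essentially the same route as the paper: apply \eqref{5.14} with $j=1$, integrate $-v_s'/v_s$ over $[x_1,x_2]$ to get \eqref{5.17} for $x_1\le x_2$, and dispose of the reversed case $x_2\le x_1$ by observing that $v_s$ is nonincreasing (from the positive lower bound in \eqref{5.14}). No meaningful difference from the paper's argument.
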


\section{Computation of $\rmf(L_s)$}
\label{sec:computation}

In this section we shall describe how to to use the results of
Sections~\ref{sec:theory-d}-\ref{sec:est-constants} to obtain rigorous, high
order estimates for $\rmf(L_s) = \lambda_s$.  As a subcase, we shall obtain
rigorous estimates for the Hausdorff dimension of certain fractal objects
described by iterated function systems.

For simplicity, we shall restrict attention to the class of maps
$\theta_{\beta}: [0,1] \to [0,1]$, where $\theta_{\beta}(x) = 1/(x + \beta)$
for $\beta \in \B$ and $\B$ as in (H5) of Section~\ref{sec:est-constants}.
For $s \ge 0$, we define $L_s: C[0,1] \to C[0,1]$ by
\begin{equation*}
%\label{8.1} 
(L_s f)(x) = \sum_{\beta \in \B} |\theta_{\beta}^{\prime}(x)|^s 
f(\theta_{\beta}(x)).
\end{equation*}

Recall (see Lemmas~\ref{lem:5.2} and \ref{lem:5.3}) that we define $\amf_0
=0$, $\bmf_0 =1/\gamma$, $\amf_{k+1} = \theta_{\Gamma}(\bmf_k)$, $\bmf_{k+1} =
\theta_{\gamma}(\amf_k)$, $\amf_{\infty} = \lim_{k \rightarrow \infty}
\amf_k$, and $\bmf_{\infty} = \lim_{k \rightarrow \infty} \bmf_k$.  Since
$\theta_{\beta}([\amf_k,\bmf_k]) \subset [\amf_k,\bmf_k]$ for $0 \le k \le
\infty$ and for all $\beta \in \B$, and since $L_s$ has a strictly positive
eigenvector on $[0,1/\gamma]$, $L_s$ induces a bounded linear operator
$L_{s,[\amf_k,\bmf_k]} : C([\amf_k,\bmf_k]) \to C([\amf_k,\bmf_k])$ and
$\rmf(L_s) = \rmf(L_{s,[\amf_k,\bmf_k]})$, Various constants are optimized by
working on $[\amf_{\infty}, \bmf_{\infty}]$, so we shall abuse notation and
also use $L_s$ to denote $L_s$ as an operator on $C([\amf_{\infty},
\bmf_{\infty}])$ (or, sometimes, $C([\amf_{k}, \bmf_{k}])$).  For a given
positive integer $r$, we assume that (H3) is satisfied, but with $S:=
[\amf_{\infty}, \bmf_{\infty}]$.

Thus $[a_i, b_i] \subset S$, $1 \le i
\le I$, denote pairwise disjoint intervals that satisfy the conditions of
(H3), Given positive integers $N_i$, $1 \le i \le I$, we write $h_i = (b_i
- a_i)/N_i$.  As in Section~\ref{sec:approx} (see \eqref{2.33}), we define
mesh points $c_{j,k}^i \in [a_i, b_i]$, $1 \le i \le I$, $1 \le j \le
N_i$, $0 \le k \le r$ and $T:=\{c_{j,k}^i\}$ for $i,j,k$ in the ranges given
above.  As in Section~\ref{sec:approx}, if $v_s$ is the positive eigenvector of
$L_s$ on $S$, $\cV_s: \hat S:= \cup_{i=1}^I [a_i, b_i] \to \R$ is the 
polynomial interpolant of $v_s$ of degree $\le r$ on $[t_{j-1}^i, t_j^i]$ for
$1 \le i \le I$, $1 \le j \le N_i$, so $\cV_s(x) = v_s(x)$ for all $x \in T$.

Our general approach will be as follows: Given $s >0$, we must find
$r, \nu, M$ and $h$ such that the conditions of Theorem~\ref{thm:5.6n} are
satisfied. First, we choose a positive integer $r \ge 2$, where $r$ is the
piecewise polynomial degree in \eqref{2.35}.  Once $r$ has been chosen, we
select a positive integer $\nu$ such that (compare Remark~\ref{rem:3.2})
\begin{equation}
\label{8.2}
c(\nu)[2 \eta(r) r^2 \psi(r)]:= \kappa_1 <1.
\end{equation}
Here $c(\nu)$ is as in \eqref{3.4}; and for our case an exact formula
for $c(\nu)$ is provided by \eqref{5.4}; where we shall take $\amf_k =
\amf_{\infty}$ in \eqref{5.4}. Also, $\psi(r)$ is as in
Lemma~\ref{lem:3.2} and $\eta(r)$ as in \eqref{3.1}.  As a practical
matter, we demand that $\kappa_1$ not be too close to 1, say $\kappa_1
\le 4/5$. Note that for fixed $r$, this means that $\nu$
  must be sufficiently large and hence $c(\nu)$ sufficiently small, so
  that \eqref{8.2} is satisfied.  We next choose $\kappa_2$ with
  $\kappa_1 < \kappa_2 <1$ and $\kappa_2$ not too close to 1.  A
  simple choice is $\kappa_2 = (1 + \kappa_1)/2$.  We define (see
  Theorem~\ref{thm:5.6n}), $M^{\prime} = \kappa_2 M$.  If we write
$u:= M \eta(r) h$, the conditions of Theorem~\ref{thm:5.6n} take the
form
\begin{align}
\label{8.3} 
\psi(r) u \exp(u) &< 1
\\
\label{8.4}
\frac{\kappa_1 \exp(u)}{1- \psi(r) u \exp(u)} &< \kappa_2 - \frac{s
  M_0(\nu)}{M}.
\end{align}
Here $M_0(\nu)$ is as in \eqref{3.4}; and in our case Lemma~\ref{lem:5.5}
insures that $M_0(\nu) \le 2/(\amf_{\infty} + \amf_{\nu-1} + \gamma)$.

Since $\exp(u)/(1- \psi(r) u \exp(u)) > 1$, \eqref{8.4} implies that
\begin{equation*}
%\label{8.5}
\kappa_1 < \kappa_2 - sM_0(\nu)/M.
\end{equation*}
We choose $M >0$ such that
\begin{equation}
\label{8.6}
M = \frac{4s}{\amf_{\infty} + \amf_{\nu-1} + \gamma}\frac{1}{\kappa_2- \kappa_1}
\ge 2 sM_0(\nu)/(\kappa_2- \kappa_1),
%sM_0(\nu)/M \le (\kappa_2- \kappa_1)/2,
\end{equation}
which implies that 
\begin{equation*}
\kappa_2 - sM_0(\nu)/M \ge \kappa_2 - (\kappa_2- \kappa_1)/2 > \kappa_1.
\end{equation*}
Also note that since $\amf_{\infty} + \amf_{\nu-1} + \gamma < \chi$, we have that
\begin{equation}
\label{8.6a}
M \ge \frac{4s}{\chi}\frac{1}{\kappa_2- \kappa_1}.
\end{equation}

Given an $M$ that satisfies \eqref{8.6}, we can choose $h = \max_i h_i$
sufficiently small, i.e., $h \le \amh_0$, that \eqref{8.3} and \eqref{8.4} are
satisfied. Recall, however, that we also have to insure that the constant $M$,
defined by \eqref{8.6} also satisfies $M \ge M_2$, where $M_2$ is as in
Lemma~\ref{lem:6.4} and $M_1$ is given by \eqref{6.12}.  As we shall see, this
may require a further restriction on the size of $h$.

The constants $M_1$ and $M_2$ are defined in terms of $G_{r+1}:=G$ (compare
\eqref{rn:6.8}), $\chi:= 2 \amf_{\infty} + \gamma$, $s$, and $H$ (compare
Lemma~\ref{lem:6.4}), and it is desirable to choose $H$ to be small.  The
constant $E$ in $G_{r+1}$ is given by Remark~\ref{rem:7.3} with $p =r+1$.
By using \eqref{6.9} in Section~\ref{sec:est-constants} and the estimates for
$E$ and $\chi$ in Lemmas~\ref{lem:5.6} and \ref{lem:5.7}, we find that we can
write
\begin{equation*}
%\label{5.18}
G = E \exp\Big(\frac{2sh}{\amf_{\infty} + \bmf_{\infty}^{-1}}\Big)
\frac{1}{(r+1)!} \Big[\frac{1}{2 \cos(\pi/(2r+2))}\Big]^{r+1} \frac{1}{2^r},
\end{equation*}
where
\begin{equation}
\label{Erp1}
E:= \frac{(2s)(2s+1) \cdots (2s+r)}{(\amf_{\infty} + \bmf_{\infty}^{-1})^{r+1}}
= \frac{(2s)(2s+1) \cdots (2s+r)}{(2 \amf_{\infty} + \gamma)^{r+1}},
\end{equation}
and we have used the fact that $\bmf_{\infty} = 1/(\amf_{\infty} + \gamma)$.
Note that in the application of \eqref{5.14} for $E$, one must take $j = r+1$.
A calculation gives
\begin{multline}
\label{5.19}
G:= G_{r+1} = 2 \exp\Big(\frac{2sh}{2 \amf_{\infty} + \gamma}\Big)
\Big[\frac{(2s)(2s+1) \cdots (2s+r)}{(2)(4)(6) \cdots (2r+2)}\Big]
\\
\cdot 
\Big[\frac{1}{2 \amf_{\infty} + \gamma}\Big]^{r+1} \Big[\frac{1}{2 \cos(\pi/(2r+2))}
\Big]^{r+1}.
\end{multline}
Finally, we define
\begin{equation}
\label{8.9}
D:= D_{r+1} = \Big[\frac{1}{\sin(\pi/(2r+2))}\Big]^2 G_{r+1}
\end{equation}
and
\begin{equation}
\label{Hr1def}
H_{r+1} = \mu D_{r+1} \frac{\chi}{2s} = 
\mu \Big[\frac{1}{\sin (\pi/(2r+2))}\Big]^2 G_{r+1}
\frac{(2 \amf_{\infty} + \gamma)}{2s}.
\end{equation}
To estimate $M_1$ and $M_2$, we shall need estimates on these quantities.
% $G_{r+1}$ and $D_{r+1}$.

\begin{lem}
\label{lem:G-decrease}
Assume that $r \ge 2$, $0 < s <2$, and $2 \amf_{\infty} + \gamma \ge 1$.
Then $G_r$ is a decreasing function of $r$.
\end{lem}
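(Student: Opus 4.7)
The plan is to compute the ratio $G_{r+1}/G_r$ directly from the explicit formula \eqref{5.19} and verify that it is less than $1$ under the three stated hypotheses.

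First I would observe that the factor $2\exp(2sh/(2\amf_\infty + \gamma))$ in \eqref{5.19} is independent of $r$ and therefore cancels when forming $G_{r+1}/G_r$. What remains is the product of three $r$-dependent ratios: (i) the "rising factorial" ratio $(2s+r)/(2r+2)$, coming from $[(2s)\cdots(2s+r)/(2\cdot 4\cdots(2r+2))] \big/ [(2s)\cdots(2s+r-1)/(2\cdot 4\cdots(2r))]$; (ii) the factor $1/(2\amf_\infty+\gamma)$, from the quotient of $(2\amf_\infty+\gamma)^{-(r+1)}$ by $(2\amf_\infty+\gamma)^{-r}$; and (iii) the cosine ratio
\begin{equation*}
R_r := \frac{[2\cos(\pi/(2r))]^r}{[2\cos(\pi/(2r+2))]^{r+1}}.
\end{equation*}

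Next I would bound each of the three factors. For (i), the hypothesis $0 < s < 2$ gives $2s+r < r+4$, and $r \ge 2$ gives $(r+4)/(2r+2) \le 1$, so $(2s+r)/(2r+2) < 1$. For (ii), the hypothesis $2\amf_\infty + \gamma \ge 1$ gives $1/(2\amf_\infty+\gamma) \le 1$. For (iii), since $\cos$ is strictly decreasing on $[0,\pi/2]$ and $\pi/(2r+2) < \pi/(2r)$, we have $2\cos(\pi/(2r)) < 2\cos(\pi/(2r+2))$, so
\begin{equation*}
R_r \le \frac{[2\cos(\pi/(2r+2))]^r}{[2\cos(\pi/(2r+2))]^{r+1}} = \frac{1}{2\cos(\pi/(2r+2))} \le \frac{1}{2\cos(\pi/6)} = \frac{1}{\sqrt{3}},
\end{equation*}
where I used that $2\cos(\pi/(2r+2))$ is increasing in $r$ and thus minimized at $r=2$.

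Multiplying the three bounds, I would conclude $G_{r+1}/G_r < 1/\sqrt{3} < 1$, which proves that $G_r$ is a strictly decreasing function of $r$ for $r \ge 2$. There is no real obstacle here; the proof is a clean multiplicative comparison where each of the three hypotheses is used in exactly one place ($s<2$ for factor (i), $2\amf_\infty+\gamma \ge 1$ for factor (ii), and $r \ge 2$ both to make factor (i) $\le 1$ and to give the uniform lower bound $2\cos(\pi/(2r+2)) \ge \sqrt{3}$ controlling the cosine ratio).
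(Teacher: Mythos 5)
Your proposal is correct and follows essentially the same approach as the paper: compute the ratio $G_{r+1}/G_r$ from the explicit formula \eqref{5.19} and show it is $<1$ by bounding each factor. The paper leaves the final inequality as ``another calculation shows,'' whereas you spell out the three factor bounds explicitly, including the quantitative estimate $R_r \le 1/(2\cos(\pi/(2r+2))) \le 1/\sqrt 3$; this fills in exactly the details the paper omits, and your grouping of the cosine terms into a single ratio $R_r$ is just a cosmetic repackaging of the paper's $\bigl[\tfrac{1}{2\cos(\pi/(2r+2))}\bigr]\bigl[\tfrac{\cos(\pi/(2r))}{\cos(\pi/(2r+2))}\bigr]^{r}$.
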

\begin{proof}
For $r \ge 2$ and $0 <s \le 2$, since $2 j+2 \ge  2s+j$ for $j \ge 2$,
it follows that
\begin{equation*}
%\label{frac-bound}
\prod_{j=0}^r \frac{2s+j}{2j+2} \le \Big(\frac{2s}{2}\Big) \Big(\frac{2s+1}{4}
\Big) = s\frac{2s+1}{4}.
\end{equation*}

By using the Taylor series for $\cos(\theta)$, we see that
\begin{equation*}
\frac{1}{2 \cos(\pi/(2r+2))} \le \frac{1}{2 - [\pi/(2r+2)]^2}.
\end{equation*}
Using these estimates in \eqref{5.19} gives
\begin{equation*}
%\label{5.20}
G_{r+1} \le 2 \exp\Big(\frac{2sh}{2 \amf_{\infty} + \gamma}\Big)
\Big[s\frac{2s+1}{4}\Big] \Big[\frac{1}{2 \amf_{\infty} + \gamma}\Big]^{r+1}
\Big[\frac{1}{2 - [\pi/(2r+2)]^2}\Big]^{r+1},
\end{equation*}
which implies that $\lim_{r \rightarrow \infty} G_{r+1} =0$.
Furthermore, for $r \ge 2$ and $2 \amf_{\infty} + \gamma \ge 1$,
another calculation shows that
\begin{equation*}
%\label{5.21}
\frac{G_{r+1}}{G_r} = \Big[\frac{2s+r}{2r+2}\Big] 
\Big[\frac{1}{2 \amf_{\infty} + \gamma}\Big] \Big[\frac{1}{2 \cos(\pi/[2r+2])}\Big]
\Big[\frac{\cos(\pi/(2r))}{\cos(\pi/(2r+2))}\Big]^r< 1,
\end{equation*}
so $G_r$ is a decreasing function for integer $r \ge 2$.
\end{proof}

If we set
\begin{equation*}
u_{r+1} = \frac{1}{\chi} \Big[\frac{1}{2 \cos(\pi/(2r+2))}\Big],
\end{equation*}
a calculation gives
\begin{equation*}
u_3 = \frac{1}{\chi \sqrt{3}}, \qquad
u_4 = \frac{1}{\chi \sqrt{2 + \sqrt{2}}},
\end{equation*}
which gives
\begin{align}
\label{8.10}
G_3 &= 2s \Big[\frac{1}{\chi\sqrt{3}}\Big]^3\Big[\frac{2s+1}{4} \Big]
\Big[\frac{s+1}{3}\Big]\exp(2sh/\chi),
%\label{8.11}
\\
\nonumber
G_4 &= 2s \Big[\frac{1}{2 + \sqrt{2}}\Big]^2 \Big[\frac{1}{\chi}\Big]^4
\Big[\frac{2s+1}{4}\Big] \Big[\frac{s+1}{3}\Big]
\Big[\frac{2s+3}{8}\Big]\exp(2sh/\chi),
\end{align}
$D_3 = 4G_3$, and $D_4 = [4/(2 - \sqrt{2})] G_4$.
Then
\begin{lem}
\label{lem:8.1}
If $0 \le s \le 3/2$, $r \ge 2$, and $\chi:= 2 \amf_{\infty} + \gamma \ge 1$,
\begin{equation*}
\frac{G_{r+2}}{G_{r+1}} \le \Big[\frac{3}{4 \sqrt{2 + \sqrt{2}}}\Big] 
\frac{1}{\chi},
\qquad \text{so} \qquad
G_{r+2} \le  \Big[\frac{3}{4 \sqrt{2 + \sqrt{2}}}\Big(\frac{1}{\chi}\Big)
\Big]^{r-1} G_3.
\end{equation*}
\end{lem}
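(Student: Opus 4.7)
The plan is to work directly from the explicit product formula \eqref{5.19} for $G_{r+1}$, form the ratio $G_{r+2}/G_{r+1}$, and then bound each of the three varying factors by its value at $r=2$.

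First I would write out
\begin{equation*}
\frac{G_{r+2}}{G_{r+1}} = \left[\frac{2s+r+1}{2r+4}\right]\left[\frac{1}{\chi}\right]\left[\frac{1}{2\cos(\pi/(2r+4))}\right]\left[\frac{\cos(\pi/(2r+2))}{\cos(\pi/(2r+4))}\right]^{r+1},
\end{equation*}
obtained by shifting the index $r \mapsto r+1$ throughout \eqref{5.19}; the $\exp(2sh/\chi)$ factor cancels, and the telescoping in the rising product $(2s)(2s+1)\cdots(2s+r)$ and in $(2)(4)\cdots(2r+2)$ leaves the single polynomial ratio shown. Then I would dispose of each factor other than $1/\chi$ in turn.

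The cosine-ratio factor is harmless: since $\pi/(2r+4) < \pi/(2r+2) < \pi/2$ and $\cos$ is decreasing on $[0,\pi/2]$, we have $\cos(\pi/(2r+2)) < \cos(\pi/(2r+4))$, so this factor is $\le 1$. For the polynomial factor, I would use $s \le 3/2$ to bound $(2s+r+1)/(2r+4) \le (r+4)/(2r+4) = \tfrac{1}{2} + 3/(2r+4)$, which is decreasing in $r$ and therefore maximized over $r \ge 2$ at $r=2$, with value $3/4$. For the cosine factor $1/(2\cos(\pi/(2r+4)))$, as $r$ increases, $\pi/(2r+4)$ decreases toward $0$, so $\cos(\pi/(2r+4))$ increases toward $1$ and the factor decreases toward $1/2$; hence it too is maximized at $r=2$, with value $1/(2\cos(\pi/8)) = 1/\sqrt{2+\sqrt{2}}$ (using the half-angle identity $\cos(\pi/8) = \tfrac{1}{2}\sqrt{2+\sqrt{2}}$). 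Multiplying these two worst-case values gives $3/(4\sqrt{2+\sqrt{2}})$, which combined with the $1/\chi$ factor is exactly the first claimed inequality.

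For the consequence, I would simply telescope: writing
\begin{equation*}
G_{r+2} = G_3 \prod_{j=3}^{r+1} \frac{G_{j+1}}{G_j},
\end{equation*}
the product has $r-1$ factors (one for each integer $j$ with $3 \le j \le r+1$), and applying the bound already obtained (with $r$ replaced by $j-1 \ge 2$) to each factor yields $G_{r+2} \le \bigl[3/(4\sqrt{2+\sqrt{2}}\,\chi)\bigr]^{r-1} G_3$. The only delicate point — and the main (mild) obstacle — is the verification that both the polynomial factor and the cosine factor are genuinely maximized at the left endpoint $r=2$; this is where the hypothesis $s \le 3/2$ is used, and it is also why $\chi \ge 1$ is not actually needed for the ratio bound itself (that hypothesis came in for the earlier monotonicity lemma, and is inherited here via its place in the statement).
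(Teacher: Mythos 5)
Your proof is correct and follows essentially the same route as the paper: you expand the ratio $G_{r+2}/G_{r+1}$ (your explicit cosine form matches the paper's shorthand $[(2s+r+1)/(2r+4)]\,u_{r+2}\,[u_{r+2}/u_{r+1}]^{r+1}$ with $u_{r+1} = 1/(2\chi\cos(\pi/(2r+2)))$), drop the cosine-ratio factor since it is at most $1$, bound the two remaining $r$-dependent factors by their $r=2$ values of $3/4$ and $1/\sqrt{2+\sqrt{2}}$, and then telescope. Your closing observation that $\chi \ge 1$ is not used for the stated ratio inequality is also accurate; the paper invokes it only in the supplementary estimate $G_3 \le \sqrt{3}/2$ tacked onto the end of the proof, which is not part of the lemma's conclusion.
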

\begin{proof}
A calculation gives
\begin{equation*}
\frac{G_{r+2}}{G_{r+1}} = \Big[\frac{2s+r+1}{2r+4}\Big] u_{r+2} 
\Big[\frac{u_{r+2}}{u_{r+1}}\Big]^{r+1}.
\end{equation*}
Since $0 < u_{r+2} < u_{r+1}$, it follows that
\begin{equation*}
\frac{G_{r+2}}{G_{r+1}} \le \Big[\frac{2s+r+1}{2r+4}\Big]
\frac{1}{\chi}\Big[\frac{1}{2\cos(\pi/(2r+4))}\Big],
\end{equation*}
and if $r \ge 2$ and $s \le 3/2$,
\begin{equation*}
\frac{G_{r+2}}{G_{r+1}} \le \Big[\frac{3}{4\chi}\Big]
\Big[\frac{1}{2\cos(\pi/8)}\Big]
\le \Big[\frac{3}{4[\sqrt{2 + \sqrt{2}}]}\Big] \frac{1}{\chi}
\end{equation*}
and so
\begin{equation*}
G_{r+2} \le \Big[\frac{3}{4[\sqrt{2 + \sqrt{2}}]}
\Big(\frac{1}{\chi}\Big)\Big]^{r-1}G_3.
\end{equation*}
Furthermore, since we assume that $\chi \ge 1$, 
\begin{multline}
\label{G3bound}
G_3 = \Big[\frac{2s}{\chi}\Big] \Big[\frac{1}{\chi^2 3 \sqrt{3}}\Big]
 \Big[\frac{2s+1}{4} \Big]
\Big[\frac{s+1}{3}\Big]\exp(2sh/\chi)
\\
\le   \Big[\frac{2s}{\chi^3}\Big]  \Big[\frac{5}{18 \sqrt{3}}\Big]\exp(3h)
\le \sqrt{3}/2,
\end{multline}
for $h \le \ln(9/5)/3 \le 0.2$.
\end{proof}

\begin{lem}
\label{lem:8.2}
Assume $0 \le s \le 3/2$, $r \ge 2$, $\chi \ge 1$,
and $D_{r+1}$ is as in \eqref{8.9}.
Then
\begin{equation*}
%\label{8.13}
D_{r+2} \le \Big[\frac{3}{4 \chi}\Big]^{r-1}D_3.
\end{equation*}
%and
%\begin{equation}
%\label{8.14}
%D_{3} = \frac{2s}{\chi^3}\frac{1}{9 \sqrt{3}}\big[(2s+1)(s+1)
%\exp(2sh/\chi) < 
%\end{equation}
\end{lem}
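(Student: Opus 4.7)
The plan is to unwind the definition of $D_{r+1}$ in \eqref{8.9}, separate the trigonometric factor from the $G$-factor, and then import the exponential-decay estimate on $G_{r+2}/G_3$ already established in Lemma~\ref{lem:8.1}. Concretely, since $\sin(\pi/6) = 1/2$, the normalization $D_3 = 4 G_3$ holds, whereas $D_{r+2} = G_{r+2}/\sin^2(\pi/(2r+4))$; dividing gives
\begin{equation*}
\frac{D_{r+2}}{D_3} \;=\; \frac{1}{4 \sin^2(\pi/(2r+4))}\cdot\frac{G_{r+2}}{G_3}.
\end{equation*}

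Applying Lemma~\ref{lem:8.1} to the second factor yields $G_{r+2}/G_3 \le [3/(4\sqrt{2+\sqrt{2}}\,\chi)]^{r-1}$. Hence the target bound $D_{r+2} \le (3/(4\chi))^{r-1} D_3$ reduces to the purely trigonometric inequality
\begin{equation*}
(2 + \sqrt{2})^{-(r-1)/2} \;\le\; 4 \sin^2(\pi/(2r+4)), \qquad r \ge 2. \tag{$\ast$}
\end{equation*}

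I would prove $(\ast)$ by an induction on $r$ together with a direct base-case check. For $r = 2$, one has $\sin^2(\pi/8) = (2-\sqrt{2})/4$, so the right-hand side equals $2-\sqrt{2}$, and squaring both sides reduces $(\ast)$ to $1 \le (2-\sqrt{2})^2(2+\sqrt{2}) = 4 - 2\sqrt{2}$, which holds since $4 - 2\sqrt{2} > 1$. For the inductive step, defining $\phi(r) := 4 \sin^2(\pi/(2r+4))\,(2+\sqrt{2})^{(r-1)/2}$, one checks that
\begin{equation*}
\frac{\phi(r+1)}{\phi(r)} \;=\; \sqrt{2+\sqrt{2}}\cdot\frac{\sin^2(\pi/(2r+6))}{\sin^2(\pi/(2r+4))} \;\ge\; 1
\end{equation*}
for $r \ge 2$; this follows because the ratio of sines is monotonically increasing toward $1$ as $r$ grows, its minimal value on $r \ge 2$ is attained at $r=2$ (where the ratio is $\sin^2(\pi/10)/\sin^2(\pi/8)$), and a direct calculation shows that $\sqrt{2+\sqrt{2}}\,\sin^2(\pi/10)/\sin^2(\pi/8) > 1$. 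Combined with the base case, this gives $\phi(r) \ge \phi(2) \ge 1$ for all $r \ge 2$, which is precisely $(\ast)$.

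The main (only) obstacle is the verification of the trigonometric inequality $(\ast)$; all other steps are essentially bookkeeping. One could alternatively avoid the induction by using the crude estimate $\sin(x) \ge 2x/\pi$ for $x \in [0, \pi/2]$, which converts $(\ast)$ into a comparison between a polynomial and an exponential in $r$, easily verified for $r \ge 2$ after checking the $r=2$ and $r=3$ cases by hand. Either route keeps the argument short and purely computational once the reduction via Lemma~\ref{lem:8.1} is in place.
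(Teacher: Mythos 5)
Your proof is correct, but it takes a genuinely different route from the paper's. The paper works term-by-term: it computes the ratio $D_{r+2}/D_{r+1}$ directly (combining the exact formula for $G_{r+2}/G_{r+1}$ from Lemma~\ref{lem:8.1} with the factor $\sin^2(\pi/(2r+2))/\sin^2(\pi/(2r+4))$), bounds the sine-ratio by $\big[\tfrac{2r+4}{2r+2}\big]\big[1 - \tfrac{1}{6}(\tfrac{\pi}{2r+4})^2\big]^{-1}$ via Taylor series, observes that each factor in the resulting bound is decreasing in $r$, evaluates at $r=2$ to get $D_{r+2}/D_{r+1} \le 3/(4\chi)$, and then iterates. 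You instead compute $D_{r+2}/D_3$ in one step by factoring out the trigonometric part and importing Lemma~\ref{lem:8.1}'s conclusion $G_{r+2}/G_3 \le [3/(4\sqrt{2+\sqrt{2}}\,\chi)]^{r-1}$ wholesale, reducing everything to the isolated trigonometric inequality $(\ast)$. Your route has the advantage of reusing Lemma~\ref{lem:8.1} as a black box rather than re-deriving a consecutive-ratio estimate, but the price is the standalone verification of $(\ast)$; the paper's route is more self-contained and avoids any exponential-versus-trigonometric comparison. One small tightening you could make in your inductive step: rather than asserting monotonicity of $\sin^2(\pi/(2r+6))/\sin^2(\pi/(2r+4))$, use the elementary fact that $\sin(x)/x$ is decreasing on $(0,\pi/2)$, which gives $\sin(\pi/(2r+6))/\sin(\pi/(2r+4)) \ge (2r+4)/(2r+6)$; then $\phi(r+1)/\phi(r) \ge \sqrt{2+\sqrt{2}}\,[(2r+4)/(2r+6)]^2 \ge \sqrt{2+\sqrt{2}}\,(4/5)^2 > 1$ for $r \ge 2$, which is a one-line calculation and avoids the monotonicity claim entirely.
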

\begin{proof}
A calculation gives
\begin{multline*}
\frac{D_{r+2}}{D_{r+1}} = \Big[\frac{u_{r+2}}{u_{r+1}}\Big]^{r+1}
u_{r+2} \Big[\frac{\sin(\pi/[2r+2])}{\sin(\pi/[2r+4])}\Big]^2 
\Big[\frac{2s+r+1}{2r+4}\Big]
\\
\le \frac{1}{\chi} \Big[\frac{1}{2 \cos(\pi/(2r+4))}\Big]
\Big[\frac{\sin(\pi/[2r+2])}{\sin(\pi/[2r+4])}\Big]^2 
\Big[\frac{r+4}{2r+4}\Big].
%\le \frac{3}{4 \chi}
%\frac{\sin(\pi/[2r+2])}{\sin(\pi/[2r+4])}
%\frac{\sin(\pi/[2r+2])}{\sin(\pi/[r+2])}.
\end{multline*}
Using Taylor series expansions for $\sin(u)$, $u \ge 0$, we have
$\sin(u) \le u$ and $\sin(u) \ge u - u^3/6$. Hence,
\begin{equation*}
\frac{\sin(\pi/[2r+2]}{\sin(\pi/[2r+4]}
\le \Big[\frac{2r+4}{2r+2}\Big]
\Big[1 - \frac{1}{6}\Big(\frac{\pi}{2r+4}\Big)^2\Big]^{-1}.
\end{equation*}
Noting that for $r \ge 2$, the expression on the right hand side of
the above is a decreasing function of $r$, as are the other two functions
of $r$ in the bound for $D_{r+2}/D_{r+1}$, we see that an upper bound
for $D_{r+2}/D_{r+1}$ is obtained by setting $r=2$ in each of the expressions
above. This gives
\begin{equation*}
\frac{D_{r+2}}{D_{r+1}} \le \Big[\frac{1}{2\chi \cos(\pi/8)}\Big] 
\Big[\frac{4}{3}\Big]^2 
\Big[1 - \frac{1}{6}\Big(\frac{\pi}{8}\Big)^2\Big]^{-2}\Big[\frac{3}{4}\Big]
\le  \frac{3}{4 \chi},
\end{equation*}
and so 
\begin{equation*}
  D_{r+2} \le \Big[\frac{3}{4 \chi}\Big] D_{r+1} \le
\Big[\frac{3}{4 \chi}\Big]^{r-1}D_3.
\end{equation*}
\end{proof}
The following bound on $H_r$ is a direct consequence of the above estimates.
\begin{lem}
\label{lem:H-est}
Assume $0 \le s \le 3/2$, $r \ge 2$. Then
\begin{equation*}
H_{r+2} \le \mu \Big[\frac{\chi}{2s}\Big] \Big[\frac{3}{4 \chi}\Big]^{r-1} 4 G_3.
\end{equation*}
\end{lem}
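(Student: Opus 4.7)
The plan is to observe that Lemma~\ref{lem:H-est} is an essentially immediate consequence of Lemma~\ref{lem:8.2} together with the definitions of $H_{r+1}$ and $D_3$, so the work has already been done in the preceding two lemmas and no new estimation is required.

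First, I would recall the definition \eqref{Hr1def}, namely $H_{r+1} = \mu D_{r+1} \tfrac{\chi}{2s}$, which on replacing $r$ by $r+1$ gives
\begin{equation*}
H_{r+2} \;=\; \mu\, D_{r+2}\,\frac{\chi}{2s}.
\end{equation*}
Next I would apply Lemma~\ref{lem:8.2}, which under the hypotheses $0 \le s \le 3/2$, $r \ge 2$, and $\chi \ge 1$ (which is implicit from the setting of Lemma~\ref{lem:8.2}) yields
\begin{equation*}
D_{r+2} \;\le\; \Big[\frac{3}{4\chi}\Big]^{r-1} D_3.
\end{equation*}
Substituting this into the preceding equation gives
\begin{equation*}
H_{r+2} \;\le\; \mu\, \frac{\chi}{2s}\, \Big[\frac{3}{4\chi}\Big]^{r-1} D_3.
\end{equation*}

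Finally, I would invoke the explicit identification $D_3 = 4 G_3$ recorded just after \eqref{8.10} (which comes straight from the definition $D_{r+1} = [1/\sin(\pi/(2r+2))]^2 G_{r+1}$ in \eqref{8.9} evaluated at $r=2$, where $[1/\sin(\pi/6)]^2 = 4$). Substituting $D_3 = 4 G_3$ into the last displayed inequality produces the claimed bound
\begin{equation*}
H_{r+2} \;\le\; \mu\, \Big[\frac{\chi}{2s}\Big] \Big[\frac{3}{4\chi}\Big]^{r-1} 4 G_3,
\end{equation*}
completing the proof. There is no real obstacle here; all the genuine analytic content (the monotonicity/decay estimates for $G_r$ and $D_r$ in $r$, and the evaluation of $\cos(\pi/8)$, $\sin(\pi/8)$, etc.) has been absorbed into Lemmas~\ref{lem:G-decrease}, \ref{lem:8.1}, and \ref{lem:8.2}, and the present lemma is simply the packaging of Lemma~\ref{lem:8.2} in terms of $H_{r+2}$ and $G_3$ via the two elementary identities $H_{r+2} = \mu D_{r+2} \chi/(2s)$ and $D_3 = 4 G_3$.
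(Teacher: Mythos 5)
Your proposal is correct and is exactly the intended argument: the paper presents Lemma~\ref{lem:H-est} without a written proof, remarking only that it is ``a direct consequence of the above estimates,'' and the three elementary steps you spell out ($H_{r+2}=\mu D_{r+2}\chi/(2s)$ from \eqref{Hr1def}, $D_{r+2}\le[3/(4\chi)]^{r-1}D_3$ from Lemma~\ref{lem:8.2}, and $D_3=4G_3$ from \eqref{8.9} with $r=2$) are precisely that consequence. Your observation that the hypothesis $\chi\ge1$ is being inherited from Lemma~\ref{lem:8.2} is also accurate and worth having made explicit, since the statement of Lemma~\ref{lem:H-est} omits it.
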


\begin{lem}
\label{lem:8.4}
Suppose $0 \le s \le 3/2$, $r \ge 2$, $M_1$ is as in \eqref{6.12},
$M_2$ is as in Lemma~\ref{6.4}, and $H_{r+1} = \mu D_{r+1} (\chi/2s)$. Then
\begin{equation*}
M_1 = \frac{\mu D_{r+1} h^r}{1 - G_{r+1}^2 h^{2r+2}} + \frac{2s}{\chi},
\end{equation*}
and
\begin{multline}
\label{8.16}
M_2 =  M_1 + \Big[\frac{2s}{\chi}\Big]\frac{1}{1- [(G_{r+1}/H_{r+1})h]^2}
\\
=\frac{2s}{\chi}\Big[1 + \frac{H_{r+1} h^r}{1 - G_{r+1}^2 h^{2r+2}}
+ \Big(1- h^2 \frac{2s}{\mu \chi}\sin^2(\pi/[2r+2])\Big)^{-1}\Big].
\end{multline}
%where  $c = c(r,\chi,s,\amh_0, \amf_{\infty}, \gamma)$.
\end{lem}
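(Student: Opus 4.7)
The proof is essentially a direct algebraic verification, and my plan is to unwind the definitions given in earlier sections in a specific order so that each formula falls out cleanly.

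First I would rewrite $M_1$ from \eqref{6.12} by observing that the bracketed factor involving $[\sin(\pi/[2r+2])]^{-2} G_{r+1}$ is exactly $D_{r+1}$ by the definition \eqref{8.9}. Pulling this grouping to the front of the first term gives the first claimed formula
\begin{equation*}
M_1 = \frac{\mu D_{r+1} h^r}{1 - G_{r+1}^2 h^{2r+2}} + \frac{2s}{\chi}
\end{equation*}
with no further work.

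Next I would compute the ratio $G_{r+1}/H_{r+1}$ using $H_{r+1} = \mu D_{r+1}(\chi/2s)$ and the definition of $D_{r+1}$: one gets $G_{r+1}/H_{r+1} = (2s/(\mu\chi))\sin^2(\pi/[2r+2])$. Substituting this into the expression for $M_2$ coming from Lemma~\ref{lem:6.4}, the product
\begin{equation*}
\frac{G_{r+1}}{H_{r+1}}\cdot\mu\cdot\frac{1}{[\sin(\pi/[2r+2])]^2}
\end{equation*}
collapses to $2s/\chi$, and the $[(G/H)h]^2$ in the denominator becomes $h^2(2s/(\mu\chi))\sin^2(\pi/[2r+2])$. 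This gives the first equality in \eqref{8.16}, namely $M_2 = M_1 + (2s/\chi)/(1 - [(G_{r+1}/H_{r+1})h]^2)$.

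For the second equality in \eqref{8.16} I would factor a common $2s/\chi$ out of the whole expression for $M_2$. Using the relation $\mu D_{r+1} = (2s/\chi)H_{r+1}$ (again just the definition of $H_{r+1}$), the first term of $M_1$ becomes $(2s/\chi)\cdot H_{r+1}h^r/(1 - G_{r+1}^2 h^{2r+2})$, while its $2s/\chi$ summand contributes the leading $1$ inside the brackets, and the cone-ratio term from the previous step contributes $(1 - h^2(2s/(\mu\chi))\sin^2(\pi/[2r+2]))^{-1}$. Combining these three contributions produces the bracketed expression in the second line of \eqref{8.16}.

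No obstacle is expected: everything is a bookkeeping exercise involving the definitions \eqref{6.12}, \eqref{8.9}, \eqref{Hr1def}, and the formula for $M_2$ in Lemma~\ref{lem:6.4}. The only point where one must be slightly careful is keeping track of the factor $(\chi/2s)$ in $H_{r+1}$ versus the factor $(2s/\chi)$ that is being pulled out, so I would double-check that step numerically on the case $r=2$ before writing up the final version.
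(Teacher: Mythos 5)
Your argument tracks the paper's own proof almost line for line: you identify $D_{r+1}$ inside \eqref{6.12} to obtain the formula for $M_1$, you compute $G_{r+1}/H_{r+1}=(2s/\mu\chi)\sin^2(\pi/[2r+2])$ so that the triple product in Lemma~\ref{lem:6.4} collapses to $2s/\chi$ and gives the first equality of \eqref{8.16}, and you then use $\mu D_{r+1}=(2s/\chi)H_{r+1}$ to pull out the common factor $2s/\chi$. All of that matches the published proof exactly.

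There is, however, a slip in the step where you write ``the $[(G/H)h]^2$ in the denominator becomes $h^2(2s/(\mu\chi))\sin^2(\pi/[2r+2])$.'' Since $[(G_{r+1}/H_{r+1})h]^2=(G_{r+1}/H_{r+1})^2h^2$, what actually results is $h^2\bigl((2s/\mu\chi)\sin^2(\pi/[2r+2])\bigr)^2$; the whole ratio $G/H$ must be squared, not just $h$. The published proof contains the same slip (the fourth line of the display drops the square when passing from $[(G_{r+1}/H_{r+1})h]^2$ to $(2s/[\mu\chi])[\sin(\pi/[2r+2])h]^2$), so you have faithfully reproduced a defect in the paper rather than caught it. The damage is benign for the subsequent bound in Remark~\ref{rem:8.4}, because squaring a quantity that is already $\le 3/4$ only makes it smaller and the upper bound on $M_2$ survives, but the \emph{identity} in the last line of \eqref{8.16} should carry the extra square. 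Incidentally, your proposed sanity check at $r=2$ would indeed have caught this, though the culprit is the missing square rather than the $\chi/2s$ versus $2s/\chi$ bookkeeping you flagged, which is in fact handled correctly.
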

\begin{proof}
Applying the definitions of $M_1$,
$M_2$, $D_{r+1}$, $G_{r+1}$, and $H_{r+1}$, we get
\begin{align*}
M_2 &= M_1 + \Big[\frac{\mu G_{r+1}}{H_{r+1}}\Big]
\Big[ \frac{1}{1- [(G_{r+1}/H_{r+1})h]^2}\Big]
\Big[\frac{1}{[\sin(\pi/[2r+2])]^2}\Big]
\\
&= M_1 + \Big[ \frac{2s}{\chi}\Big] \frac{1}{1- [(G_{r+1}/H_{r+1})h]^2}
\\
&=\frac{2s}{\chi} + \frac{\mu D_{r+1} h^r}{1 - G_{r+1}^2 h^{2r+2}}
+ \Big[\frac{2s}{\chi}\Big]\frac{1}{1- [(G_{r+1}/H_{r+1})h]^2}
\\
&= \frac{2s}{\chi} +  \Big[\frac{2s}{\chi}\Big]
\frac{H_{r+1} h^r}{1 - G_{r+1}^2 h^{2r+2}}
+ \Big[\frac{2s}{\chi}\Big]\frac{1}{1- (2s/[\mu\chi])[\sin(\pi/[2r+2])h]^2}
\\
&= \frac{2s}{\chi}\Big[1 + \frac{H_{r+1} h^r}{1 - G_{r+1}^2 h^{2r+2}}
+ \Big(1- h^2 \frac{2s}{\mu \chi}\sin^2(\pi/[2r+2])\Big)^{-1}\Big].
\end{align*}
\end{proof}
\begin{remark}
\label{rem:8.3}
Note that $D_{r+1}$ has  the factor $2s/\chi$, so the
identity \eqref{8.16} for $M_2$ does not blow up as $s \rightarrow 0$.
\end{remark}
\begin{remark}
\label{rem:8.4}
If we replace in $G_3$ and $D_3$, the quantity $\exp(2sh/\chi)$ by
$\exp(2s\amh_0/\chi)$, where $\amh_0$ is chosen so that \eqref{8.3} and \eqref{8.4}
are satisfied for $0< h \le \amh_0$, then we easily obtain a bound for
$M_2$ in the form
\begin{equation}
\label{8.16a}
M_2 \le \frac{2s}{\chi} \Big[2 + H_{r+1} h^r(1 + \tilde ch^{2r+2}) + ch^2\Big].
\end{equation}
where $\tilde c$ and $c$ are easily computable from
\eqref{8.16}.  Note that if $0 \le s \le 3/2$, $\chi = 2 \amf_{\infty}
+ \gamma \ge 1$, and $r \ge 2$, then $\chi \ge 1$ and
$\sin^2(\pi/[2r+2]) \le 1/4$. So, for $h \le 1/\sqrt{3}$,
\begin{equation*}
\Big(1- h^2 \frac{2s}{\mu \chi}\sin^2(\pi/[2r+2])\Big)^{-1}
\le (1 - 3h^2/4)^{-1} \le 1 + h^2,
\end{equation*}
i.e., we can take $c =1$. Similarly, for $h \le 0.2$, $G_{r+1}^2 \le 3/4$, so
we can also take $\tilde c =1$.
\end{remark}
Recall that we have to insure that $M > M_2$.
From \eqref{8.6a} we have that 
\begin{equation*}
M \ge \Big[\frac{4s}{\chi}\Big]\frac{1}{\kappa_2- \kappa_1}.
\end{equation*}
Comparing this expression to the bound for $M_2$ given in \eqref{8.16a}
and noting that $\kappa_2- \kappa_1 <1$, $M$ will be $> M_2$ if we
choose $h \le \amh_1$ sufficiently small so that it also satisfies
\begin{equation}
\label{h1cond}
2 + H_{r+1} h^r(1 + h^{2r+2}) + h^2 \le 2/(\kappa_2- \kappa_1).
\end{equation}

We can now state versions of Theorem~\ref{thm:6.5} and Theorem~\ref{thm:6.6}
in the context of this section.
\begin{thm}
\label{thm:8.1}
Assume that $r\ge 2$, $\chi \ge 1$, and $0 \le s \le 3/2$, and let
$\nu, \kappa_1$, and $\kappa_2$ be as described at the beginning of
this section.  Let $M_2$ and $H_{r+1}$ be as described in
Lemma~\ref{lem:8.4} and select $M$ such that \eqref{8.6} is satisfied.
Finally, assume that $h = \max_{i \in I} h_i \le \min(\amh_0,\amh_1,0.2)$. Then,
with $u = M\eta(r) h$, \eqref{8.3} and \eqref{8.4} are satisfied and
$M > M_2$.  Furthermore, (compare Theorem~\ref{thm:6.5})
$\bL_{s,\nu}(K(M;T)) \subset K(M';T)$, where $M' = \kappa_2 M < M$.
In addition, we have that for $H = H_{r+1}$,
\begin{equation*}
\lambda_s^{\nu}(1 - H h^r) \le \rmf(\bL_{s,\nu}) \le \lambda_s^{\nu}(1 + H h^r).
\end{equation*}
\end{thm}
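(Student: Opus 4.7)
The plan is to verify the hypotheses of Theorem~\ref{thm:5.6n} and Theorem~\ref{thm:6.5} one by one, using the constants $\nu$, $\kappa_1$, $\kappa_2$, $M$, and the mesh parameter $h$ constructed at the beginning of this section. First I would note that the choices of $\nu$ and $\kappa_1$ via \eqref{8.2} guarantee $c(\nu)[2\eta(r)r^2\psi(r)]=\kappa_1<1$, and that the definition of $\amh_0$ is precisely that $0<h\le \amh_0$ forces \eqref{8.3} (i.e., $\psi(r)u\exp(u)<1$ with $u=M\eta(r)h$) and \eqref{8.4}. For the latter, the key algebraic point is that by \eqref{8.6} we have $sM_0(\nu)/M\le (\kappa_2-\kappa_1)/2$, so $\kappa_2-sM_0(\nu)/M\ge (\kappa_1+\kappa_2)/2>\kappa_1$; since the left side of \eqref{8.4} tends to $\kappa_1$ as $h\downarrow 0$, smallness of $h$ (absorbed into $\amh_0$) yields \eqref{8.4}.

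Next I would establish $M>M_2$. Using Lemma~\ref{lem:8.4}, the bound \eqref{8.16a} gives $M_2\le (2s/\chi)[2+H_{r+1}h^r(1+\tilde ch^{2r+2})+ch^2]$, and Remark~\ref{rem:8.4} allows us to take $c=\tilde c=1$ once $h\le 0.2<1/\sqrt{3}$. On the other hand, \eqref{8.6a} yields $M\ge (4s/\chi)/(\kappa_2-\kappa_1)$. Therefore $M>M_2$ will hold provided $2+H_{r+1}h^r(1+h^{2r+2})+h^2\le 2/(\kappa_2-\kappa_1)$, which is exactly the defining inequality \eqref{h1cond} for $\amh_1$. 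Hence the hypothesis $h\le\min(\amh_0,\amh_1,0.2)$ delivers simultaneously \eqref{8.3}, \eqref{8.4}, and $M>M_2$.

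With these three ingredients in hand, Theorem~\ref{thm:5.6n} applies with $M'=\kappa_2 M<M$, giving
\begin{equation*}
\bL_{s,\nu}(K_M(T)\setminus\{0\})\subset K_{M'}(T)\setminus\{0\}.
\end{equation*}
This verifies the mapping property. Finally, since we have arranged $M\ge M_2$ and the cone-invariance property above, Theorem~\ref{thm:6.5} applies verbatim (with $H=H_{r+1}$ of Lemma~\ref{lem:6.4}) and yields the desired two-sided bound
\begin{equation*}
\lambda_s^{\nu}(1-Hh^r)\le \rmf(\bL_{s,\nu})\le \lambda_s^{\nu}(1+Hh^r).
\end{equation*}

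The main obstacle is not any single deep estimate but rather the compatibility of the three constraints on $h$: the collocation/Lipschitz condition \eqref{8.4} (which shrinks $\amh_0$), the smallness required so that the upper bound for $M_2$ in \eqref{8.16a} lies below our explicit $M$ (which shrinks $\amh_1$), and the universal bound $h\le 0.2$ used in Remark~\ref{rem:8.4} to linearize the quadratic corrections from $G_{r+1}^2h^{2r+2}$ and the $\sin^2(\pi/(2r+2))$ term. Carefully checking that the choice \eqref{8.6} of $M$ makes these three restrictions mutually satisfiable, without circular dependence on $h$ through $G_{r+1}$, is the only delicate point; the exponential factor $\exp(2sh/\chi)$ in $G_{r+1}$ is bounded once $h\le 0.2$, which is exactly why the third restriction is included.
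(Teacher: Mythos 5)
Your proposal is correct and follows essentially the same route the paper takes: the paper's proof is a three-line pointer to ``the computations above,'' and what you have written out is precisely that preceding discussion in Section~7, namely that (8.2), (8.6), and the thresholds $\amh_0$ (enforcing (8.3) and (8.4)) and $\amh_1$ (enforcing $M>M_2$ via (8.16a) and (h1cond)) together place you inside the hypotheses of Theorem~\ref{thm:5.6n} and then Theorem~\ref{thm:6.5}. You also correctly flag the only delicate point — that $G_{r+1}$, and hence $H_{r+1}$ and $M_2$, carry an $h$-dependence through $\exp(2sh/\chi)$ — and resolve it the same way the paper does in Remark~\ref{rem:8.4} by capping $h$ (the paper uses $\amh_0$ there, you use $0.2$; both work given the hypothesis $h\le\min(\amh_0,\amh_1,0.2)$).
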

\begin{proof}
The fact that $M_2 >M$ follows directly from the computations above.
Our selection of $r, \nu, h, M$ and $M'= \kappa_2 M$ shows that the inequality
in Theorem~\ref{3.1} is satisfied, so $\bL_{s,\nu}(K(M;T)) \subset
K(M';T)$. The inequality for $\rmf(\bL_{s,\nu})$ in Theorem~\ref{thm:8.1}
follows directly from Theorem~\ref{thm:6.5}.
\end{proof}
\begin{thm}
\label{thm:8.2}
Under the hypotheses of Theorem~\ref{thm:8.1}, we have
\begin{equation*}
[(1 + H h^r)^{-1} \rmf(\bL_{s,\nu})]^{1/\nu} \le \lambda_s \le
[(1 - H h^r)^{-1} \rmf(\bL_{s,\nu})]^{1/\nu},
\end{equation*}
where the entries of the matrices $[1 + H h^r]^{-1} \bL_{s,\nu}$ and
$[1 - H h^r]^{-1} \bL_{s,\nu}$ differ by $O(h^r)$.
\end{thm}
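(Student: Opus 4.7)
The plan is to derive Theorem~\ref{thm:8.2} as an essentially immediate algebraic consequence of Theorem~\ref{thm:8.1}. Under the hypotheses of Theorem~\ref{thm:8.1}, we have both
\begin{equation*}
\rmf(\bL_{s,\nu}) \ge \lambda_s^{\nu}(1 - H h^r),
\qquad
\rmf(\bL_{s,\nu}) \le \lambda_s^{\nu}(1 + H h^r).
\end{equation*}
Since $h \le 0.2$ and we may assume $Hh^r < 1$ (which is part of the applicability of the bounds), both $1 - Hh^r$ and $1 + Hh^r$ are strictly positive. Dividing the first inequality by $1 - Hh^r$ and the second by $1 + Hh^r$, and then taking $\nu$-th roots (all quantities involved are nonnegative), we obtain the two-sided bound
\begin{equation*}
\bigl[(1 + Hh^r)^{-1} \rmf(\bL_{s,\nu})\bigr]^{1/\nu}
\le \lambda_s
\le \bigl[(1 - Hh^r)^{-1} \rmf(\bL_{s,\nu})\bigr]^{1/\nu}.
\end{equation*}

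For the second assertion, I would simply compute the difference of the two scalar multipliers. We have
\begin{equation*}
(1 - Hh^r)^{-1} - (1 + Hh^r)^{-1} = \frac{2 H h^r}{1 - H^2 h^{2r}},
\end{equation*}
which is $O(h^r)$ under the standing assumption $Hh^r < 1$. Since the entries of the matrices $[1 \pm Hh^r]^{-1}\bL_{s,\nu}$ are obtained from the fixed entries of $\bL_{s,\nu}$ multiplied by these scalars, the entrywise difference is the entrywise absolute value of $\bL_{s,\nu}$ times $2Hh^r/(1 - H^2h^{2r})$, and hence is $O(h^r)$ uniformly in the entry. This completes the proof.

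There is no real obstacle here: the theorem is a restatement of Theorem~\ref{thm:8.1} after elementary manipulation, and the order-of-magnitude remark is purely algebraic. The substantive content — establishing the spectral radius bound itself — has already been carried out in Sections~\ref{sec:theory-d}--\ref{sec:estimating-sr} and packaged in Theorem~\ref{thm:8.1}.
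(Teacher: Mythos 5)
Your argument is correct and is exactly what the paper has in mind: the paper gives no explicit proof of Theorem~\ref{thm:8.2} (nor of the parallel Theorem~\ref{thm:6.6}), stating only that it "follows immediately" from the two-sided spectral radius bound, and your divide-then-take-$\nu$-th-roots manipulation together with the scalar-difference computation for the $O(h^r)$ remark is precisely that immediate deduction made explicit.
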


Using the inequalities of Theorem~\ref{thm:8.1}, we can obtain rigorous
upper and lower bounds on the Hausdorff dimension $s_*$ of the invariant
set associated with the transfer operator $L_s$ as follows.
Let $s_l$ and $s_u$ denote values of $s$ satisfying
\begin{equation*}
(1-Hh^r)^{-1}  \rmf(\bL_{s_u,\nu}) <1, \qquad
(1+Hh^r)^{-1}  \rmf(\bL_{s_l,\nu}) >1.
\end{equation*}
It follows immediately from Theorem~\ref{thm:8.1} that $\lambda_{s_u}^{\nu} <
1$ and $\lambda_{s_l}^{\nu} >1$.  Since the spectral radius $\lambda_s$ of
$L_s$ is a decreasing function of $s$, there will be a value $s_*$ satisfying
$s_l < s_* < s_u$ for which $\lambda_{s_*}^{\nu} = 1$, or equivalently
$\lambda_{s_*} = 1$.  The value $s_*$ gives the Hausdorff dimension $s_*$ of
the invariant set associated with the transfer operator $L_s$.

\section{Numerical Computations}
\label{sec:num-comp}
In this final section, we present results of computations of the
Hausdorff dimension $s$ for various choices of sets of continued
fractions, maximum mesh size $h$, piecewise polynomial degree $r$, and
number of iterations $\nu$ of the map, where $\nu=1$ corresponds to
the original map.  These computations include choices of the above
parameters (especially the number of iterations $\nu$), for which the
hypotheses of our theorems are satisfied, but also computations which
obtain the same results when the mappings are not iterated (denoted by
$\nu =1^*$ in Table~\ref{tb:t2} below). We note the obvious fact that
as the number of iterations increase, the complexity of the operator
increases, so the time to compute the maximum eigenvalue of the
resulting matrix operator increases as well. To keep the time involved
in the various computations to be reasonable, we have elected to
compute to fewer digits those sets with a large value of $\nu$,
especially if the set $E$ contains many digits.  We have also done
additional computations, not reported in Table~\ref{tb:t2}, which
indicate that the method is robust with respect to the choices of $h$,
$r$, and $\nu$.

\begin{table}[htp]
\footnotesize
%\scriptsize
\caption{Computation of Hausdorff dimension $s$ for various choices of 
sets of continued fractions, maximum mesh size $h$, piecewise polynomial
degree $r$, and number of iterations $\nu$.}
\label{tb:t2}
\begin{center}
%%%\begin{tabular}{|c|c|c|}
\begin{tabular}{|c|c|c|c|c|}
\hline
Set E   &   $r$  & $h$  & $\nu$ &\\
\hline 
& \multicolumn{4}{l|}{$s =$}\\
\hline \hline 
E[1,2] & 14 & 0.0002  & 7 & \\
\hline 
& \multicolumn{4}{l|}
{$s =$ 0.531 280 506 277 205 141 624 468 647 368 471 785 493 059 109 018 398}
\\
\hline\hline  
E[1,3] & 8  & 5.0e-05  & 6   \\
\hline 
& \multicolumn{4}{l|}{$s =$
0.454 489 077 661 828 743 845 777 611 651}\\
\hline\hline 
E[1,4] & 8  & 5.0e-05  & 6   \\
\hline 
& \multicolumn{4}{l|}{$s =$
0.411 182 724 774 791 776 844 805 904 696}\\
\hline\hline 
%E[1,10] &   &   &  & \\
%\hline 
%& \multicolumn{4}{l|}
%{$s =$ }
%\\
%\hline\hline
E[2,3] & 8  & 5.0e-05  & 3 & \\
\hline 
& \multicolumn{4}{l|}
{$s =$ 0.337 436 780 806 063 636 304 494 910 387}
\\
\hline\hline  
E[2,4] & 8  & 5.0e-05  & 3 & \\
\hline 
& \multicolumn{4}{l|}
{$s =$ 0.306 312 768 052 784 030 277 908 307 445}
\\
\hline\hline
E[3,4] & 8  & 5.0e-05 & 3 & \\
\hline 
& \multicolumn{4}{l|}
{$s =$ 0.263 737 482 897 426 558 759 863 384 275}
\\
\hline\hline
E[3,7] & 18   & .01  & 3 & \\
\hline 
& \multicolumn{4}{l|}
{$s =$ 0.224 923 947 191 778 989 184 480 593 490}
\\
\hline\hline
E[10,11]  & 20  & .002  & 2 & \\
\hline 
& \multicolumn{4}{l|}
{$s =$ 0.146 921 235 390 783 463 311 108 628 515 904 073 067 083 129 676 755}
\\
\hline \hline 
%E[100,1000] & 8  &  9.00e-08 & 0 & \\
%\hline 
%&  \multicolumn{4}{l|}
%{$s =$ 0.06 106 130 190 793 020 467 380 998 046 273 884 781 897 133 865 728}
%\\
%\hline \hline 
E[100, 10,000] & 20  & .002   & 1 & \\
\hline 
&  \multicolumn{4}{l|}
{$s =$ 0.052 246 592 638 658 878 652 588 416 300 508 181 012 676 284 431 681}
\\
\hline \hline 
E[1,2,3] & 5  & .0001  & 5 & \\
\hline 
&  \multicolumn{4}{l|}
{$s =$ 0.705 660 908 028 738}
\\
\hline \hline 
E[1,3,4] & 5  & .0001   & 5 & \\
\hline 
&  \multicolumn{4}{l|}
{$s =$ 0.604 242 257 756 515 }
\\
\hline \hline 
E[1,3,5] & 8  & .001   & 6 & \\
\hline 
&  \multicolumn{4}{l|}
{$s =$ 0.581 366 821 182 975  }
\\
\hline \hline
E[1,4,7] & 6  & .001   & 6 & \\
\hline 
&  \multicolumn{4}{l|}
{$s =$ 0.517 883 757 006 911}
\\
\hline \hline
E[2,3,4] & 16  & .005  & 4 & \\
\hline 
&  \multicolumn{4}{l|}
{$s =$ 0.480 696 222 317 573 041 322 515 564 711}
\\
\hline \hline 
E[1,2,3,4] & 8  & .005  & 6 & \\
\hline 
&  \multicolumn{4}{l|}
{$s =$ 0.788 945 557 483 153}
\\
\hline \hline 
E[2,3,4,5] & 16  & .005  & 4 & \\
\hline 
&  \multicolumn{4}{l|}
{$s =$ 0.559 636 450 164 776 713 312 144 913 530}
\\
\hline \hline 
E[1,2,3,4,5] & 5   & .0005   & 5 & \\
\hline 
& \multicolumn{4}{l|}{$s =$ 0.836 829 443 681 209}\\
\hline \hline 
E[2,4,6,8,10] & 7  & .005  & 3 & \\
\hline 
& \multicolumn{4}{l|}
{$s =$ 0.517 357 030 937 017}
\\
\hline\hline 
E[1,\ldots,10] & 10  & .01  & 1* &\\
\hline 
& \multicolumn{4}{l|}{$s =$ 0.925 737 591 146 765}\\
\hline \hline
E[1,\ldots,34] & 10  & .01  & 1* &\\
\hline 
& \multicolumn{4}{l|}{$s =$ 0.980 419 625 226 980 }\\
\hline \hline
E[1,3,5,\ldots,33] & 10  & .01  & 1* &\\
\hline 
& \multicolumn{4}{l|}{$s =$ 0.770 516 008 717 163}\\
\hline \hline
E[2,4,6,\ldots,34] & 10  & .01  & 1* &\\
\hline 
& \multicolumn{4}{l|}{$s =$ 0.633 471 970 241 089}\\
\hline 
\end{tabular}
\end{center}
\end{table}

In addition to the results presented in Table~\ref{tb:t2}, we have also used
our method to compute the Hausdorff dimension of the set $E[1,2]$ with
degree $r =36$, $h = .001$, and $\nu = 1$ using multiple precision
with 108 digits.  Although this choice does not satisfy the hypotheses
of our theorem, the result agrees to 100 decimal places with the
result in \cite{JP100}.  While we do not have a proof that our method
also works in the non-iterated situation, we do not have any examples
where it fails.  We conjecture that the limitation is the method of
proof and not the underlying method.

We next discuss how to control the size of some of the constants that appear
in the error estimates.  We begin with the constant $\mu = \max_i h_i/\min_i
h_i$.  Recall that to satisfy hypothesis (H3), for a given positive integer
$\nu$, we need to determine pairwise disjoint, nonempty compact intervals
$[a_i,b_i] \subset S$, $1 \le i \le I$, such that for every $\omega \in
\Omega_{\nu}$, there exists $i =i(\omega)$, $1 \le i \le I$, such that
$\theta_{\omega}(S) \subset [a_{i},b_{i}]$.  By the results in the previous
section, we can take $S = [\amf_{\infty}, \bmf_{\infty}]$ and we then order
the $\omega \in \Omega_{\nu}$, such that the sets $\theta_{\omega}(S)$ are
ordered with $a_i < a_{i+1}$.  Although we could use the domain consisting
of the union of the sets  $\theta_{\omega}(S)$, this can lead to very
small subinterval sizes. Instead, we determine a new domain 
by iterating only $\nu^{\prime}$ times, while still using 
the mappings obtained by $\nu$ iterations to calculate the mapping $L$.
The constant $\nu^{\prime}$ is determined so that the length of the smallest
interval will be $\ge h_{max}/\mu.$

Although we have not done the computations using interval arithmetic, 
we have only included the number of digits in each computation that we
expect to be correct, which is always less than the number of digits
provided by {\it Matlab} for the precision we have specified. For
computations that use more digits than provided by standard {\it Matlab} 
computations, we have used the {\it Advanpix} multiprecision toolbox.

Because the theory developed in the previous sections involves the
computation and estimates for many constants and parameters, we next
provide, for the benefit of the reader, details for the specific
example of $E[1,4,7]$, corresponding to the computation for $r=6$, $h=0.001$,
$\nu = 6$, and $s =0.518$, shown in Table~\ref{tb:t2}.  The computational
domain is determined by only iterating $\nu^{\prime} = 2$ times and the total
number of subintervals used is $= 191$. With these choices,
$\gamma = 1$ and $\Gamma =7$. Then by \eqref{5.1},
\begin{equation*}
\amf_{\infty} = - \frac{\gamma}{2} + \sqrt{(\gamma/2)^2 + (\gamma/\Gamma)}
\approx 0.127 \quad \text{and} \quad
\bmf_{\infty} = (\Gamma/\gamma) \amf_{\infty} \approx 0.8875.
\end{equation*}
Working on the interval $[\amf_{\infty}, \bmf_{\infty}]$, we have
$\chi = \amf_{\infty} + \bmf_{\infty}^{-1} = 2 \amf_{\infty} + \gamma 
\approx 1.25$.  From
\eqref{5.5}-- \eqref{5.7}, we have for $\nu = 6$, that
\begin{equation*}
c(\nu) = [\tilde B_{\nu-1} \amf_{\infty} + \tilde B_{\nu}]^{-2} = 0.0051
\end{equation*}
From \eqref{3.1}, we have that $\eta(r) = 1/2$ and from Lemma~\ref{lem:3.2}
that
\begin{equation*}
\psi(r) = (2/\pi) \ln(r +1) + 3/4 \approx 2.0
\end{equation*}
From \eqref{M0nuchoice}, since we are working on the interval 
$[\amf_{\infty}, \bmf_{\infty}]$, we take
\begin{equation*}
M_0(\nu) = 2/(\amf_{\infty} + \bmf_{\nu}^{-1}) = 2/(\amf_{\infty} + \amf_{\nu-1}
+ \gamma) = 1.5954
\end{equation*}
Setting
\begin{equation*}
\kappa_1 = c(\nu) 2 \eta(r) r^2 \psi(r) =  0.364   <1,
\end{equation*}
we choose $\kappa_2 = (1 + \kappa_1)/2 = 0.6823$, so that $\kappa_1 <
\kappa_2 <1$.  Next, we choose
\begin{equation*}
M = \frac{4s}{\amf_{\infty} + \amf_{\nu-1} + \gamma}\frac{1}{\kappa_2 - \kappa_1}
= 5.2022
\end{equation*}
and $M^{\prime} = \kappa_2 M$.
One of the conditions on the mesh size $h = \max_{i \in I} h_i$ is
that it be sufficiently small so that \eqref{8.3} and \eqref{8.4} are
satisfied.  For our example, setting $u = M \eta(r) h$, we have
\begin{gather*}
\psi(r) u \exp(u) = .0052 < 1,
\\
\frac{\kappa_1 \exp(u)}{1- \psi(r) u \exp(u)} = 0.3674
< 0.5234 = \kappa_2 - \frac{s   M_0(\nu)}{M}.
\end{gather*}

The second condition, coming from \eqref{h1cond} is that
\begin{equation}
\label{secondcond}
2 + H_{r+1}(h^r + h^{2r+2}) + h^2 \le 2/(\kappa_2 - \kappa_1),
\end{equation}
where, combining \eqref{8.9}, \eqref{Hr1def}, and Lemma~\ref{lem:8.2}, we have
\begin{equation*}
H_{r+1} \le \mu  \frac{\chi}{2s} D_{r+1}
\le \mu  \frac{\chi}{2s} \Big[\frac{3}{4 \chi}\Big]^{r-2} D_3
\le \mu  \frac{\chi}{2s} \Big[\frac{3}{4 \chi}\Big]^{r-2} 4 G_3,
\end{equation*}
where
\begin{equation*}
G_3 = 2s \Big[\frac{1}{\chi\sqrt{3}}\Big]^3\Big[\frac{2s+1}{4} \Big]
\Big[\frac{s+1}{3}\Big]\exp(2sh/\chi).
\end{equation*}
Combining these results and setting $\mu$, the ratio of the maximum 
subinterval size to the minimum subinterval size,  $= 3.76$, the value
calculated by the computer code, we get
\begin{equation*}
H_{r+1} \le 0.0608.
\end{equation*}
Since $2/(\kappa_2 - \kappa_1) \ge 6$, it is clear that
\eqref{secondcond} is satisfied.

We thus have satisfied the conditions of Theorems~\ref{thm:8.1} and
\ref{thm:8.2}, which guarantee the rigorous bounds we use to obtain
rigorous upper and lower bounds on the Hausdorff dimension of 
the set $E[1,4,7]$.

\bibliographystyle{amsplain}

\end{document}